\DeclareMathAlphabet{\pazocal}{OMS}{zplm}{m}{n}
\pgfplotsset{compat=1.18}
\newcommand{\dlgraffa}{\{ \hspace{-0.1cm} \{}
\newcommand{\drgraffa}{\} \hspace{-0.1cm} \}}
\newcommand{\vertiii}[1]{{\left\vert\kern-0.25ex\left\vert\kern-0.25ex\left\vert #1 \right\vert\kern-0.25ex\right\vert\kern-0.25ex\right\vert}}
\newcommand{\IR}{{\mathbb R}}
\newcommand{\IC}{{\mathbb C}}
\newcommand{\IN}{{\mathbb N}}
\newcommand{\IZ}{{\mathbb Z}}
\newcommand{\bx}{{\mathbf x}}
\newcommand{\bd}{{\mathbf d}}
\newcommand{\bn}{{\mathbf n}}
\newcommand{\bE}{{\mathbf E}}
\newcommand{\ri}{{\mathrm i}}
\newcommand{\re}{{\mathrm e}}
\newcommand{\rd}{{\,\mathrm d}}
\newcommand{\inc}{{\mathrm{inc}}}
\newcommand{\scat}{{\mathrm{scat}}}
\newcommand{\Mh}{{\mathcal M_h}}
\newcommand{\cA}{{\mathcal A}}
\newcommand{\pa}{{\mathtt{a}}}
\newcommand{\pb}{{\mathtt{b}}}
\newcommand{\pd}{{\mathtt{d}}}
\newcommand{\dn}{{\partial_\bn}}
\newcommand{\dnK}{{\partial_{\bn_K}}}
\newcommand{\dxo}{{\partial_{x_1}}}
\newcommand{\dxt}{{\partial_{x_2}}}
\newcommand{\cu}{{\overline{u}}}
\newcommand{\cv}{{\overline{v}}}
\newcommand{\cw}{{\overline{w}}}
\newcommand{\GD}{{\Gamma_D}}
\newcommand{\GH}{{\Gamma_H}}
\newcommand{\GmH}{{\Gamma_{-H}}}
\newcommand{\GpmH}{{\Gamma_{\pm H}}}
\newcommand{\Gleft}{\Gamma_{\mathrm{left}}}
\newcommand{\Gright}{\Gamma_{\mathrm{right}}}
\newcommand{\Fh}{\pazocal{F}_h}
\newcommand{\FhI}{\pazocal{F}_h^I}
\definecolor{myblue}{rgb}{0,0,0.6}     
\title{Trefftz Discontinuous Galerkin methods for scattering by periodic structures} 
\author{Armando Maria Monforte\thanks{Department of Mathematics, University of Pavia, Italy
		(\href{mailto:armandomaria.monforte01@universitadipavia.it}{armandomaria.monforte01@universitadipavia.it}), ORCID: 0009-0000-7687-2217},
	Andrea Moiola\thanks{Department of Mathematics, University of Pavia, Italy 
		(\href{mailto:andrea.moiola@unipv.it}{andrea.moiola@unipv.it}), ORCID: 0000-0002-6251-4440}	
}
\date{\today}
\begin{document}
	\newtheorem{thm}{Theorem}[section]
	\newtheorem{prop}[thm]{Proposition}
	\newtheorem{lem}[thm]{Lemma}
	\newtheorem{corol}[thm]{Corollary}
	\newtheorem{defin}[thm]{Definition}
	
	\newtheorem{rem}[thm]{Remark}
	\newtheorem{example}[thm]{Example}

	\maketitle 
	
	\section*{Abstract}

	We propose a Trefftz discontinuous Galerkin (TDG) method for the approximation of plane wave scattering by periodic diffraction gratings, modelled by the two-dimensional Helmholtz equation.
	The periodic obstacle may include penetrable and impenetrable regions.
	The TDG method requires the approximation of the Dirichlet-to-Neumann (DtN) operator on the periodic cell faces, and relies on plane wave discrete spaces.
	For polygonal meshes, all linear-system entries can be computed analytically.
	Using a Rellich identity, we prove a new explicit stability estimate for the Helmholtz solution, which is robust in the small material jump limit.
	
	\bigskip\noindent
	{\bf Keywords:} \; Diffraction grating, Quasi-periodic, Helmholtz equation, Rellich identity, Discontinuous Galerkin, Trefftz method, Plane wave basis
	
	\bigskip\noindent
	{\bf Mathematics Subject Classification (2020):} \;
	65N30,
	35J05,  
	35Q60, 
	78A45, 
	78M10 
	
	\section{Introduction}
	The electromagnetic scattering by periodic structures has been an area of significant interest in computational electromagnetics for many decades \cite{BaoLi2022,BonnetBD1994}.
	The numerical simulation of such problems typically consists in the truncation of the computational domain to a bounded cell, and in the approximation of a time-harmonic boundary value problem (BVP) with appropriate boundary conditions.
	We thus propose the use of a numerical scheme that has been successfully employed for other time-harmonic problems: the Trefftz discontinuous Galerkin method (TDG) \cite{Hiptmair2016,Hiptmair2011}.
	The TDG approximates the unknown field, on a finite-element mesh, with a discrete space spanned by elementwise solutions of the PDE under consideration.
	When plane wave (complex exponentials) basis functions and polygonal meshes are used, it is possible to obtain efficient quadrature-free system assembly and very accurate solutions.
	In particular, convergence may enjoy faster rates than for piecewise-polynomial discrete spaces, see \cite{Moiola2011} and Remark~\ref{rem:PWapprox} below.
	
	We consider obstacles that are periodic in one direction (say $x_1$) and invariant under translation in another direction (say $x_3$).
	The scattering of an electromagnetic plane wave, with electric field parallel to the grating direction $x_3$, is described by the two-dimensional Helmholtz equation in the $(x_1,x_2)$ plane \cite[\S1.3]{BaoLi2022}.
	We allow both perfect electric conductor (PEC) scatterers, leading to Dirichlet impenetrable obstacles, and dielectric media, leading to piecewise-constant, possibly complex-valued, material parameters.
	The scattering problem can be formulated as a Helmholtz BVP posed in a bounded cell, with quasi-periodic boundary conditions on two sides, and a Dirichlet-to-Neumann (DtN) condition on the rest of the boundary.
	These conditions exactly replace the Sommerfeld radiation condition.
	
	The well-posedness and the stability analysis of this class of problems is delicate, since some configurations can support guided modes and non-unique solutions \cite{BonnetBD1994}.
	We prove a new stability bound on the solution under some assumptions: that the obstacle is non trapping as in \cite[Theorem~3.5]{BonnetBD1994}, and that the wavenumber is not a Rayleigh--Wood anomaly, i.e.\ the DtN operators are injective.
	The stability estimate \eqref{stimaH1} thus obtained is fully explicit in all parameters.
	This result is related to some previous ones, notably those in \cite{Lechleiter2010,Zhu2024,Civiletti2020}, but includes configurations not covered by these references as detailed in Remark~\ref{rem:StabilityCompare}.
	
	We describe in detail the application of the TDG to the quasi-periodic Helmholtz problem with DtN boundary conditions.
	We recall that the TDG is an extension and reformulation of the ultra weak variational formulation (UWVF) \cite{Cessenat1998,Huttunen2002}.
	To effectively take into account the discontinuous material coefficients, we follow the numerical flux definition proposed in \cite{Howarth2014}.
	The simple complex-exponential expression of the plane wave basis functions allows to compute all integrals arising in the linear system assembly analytically.
	This includes the computation of the Fourier coefficients needed for the implementation of a truncated DtN map.
	We briefly describe the approximation properties of plane waves, their numerical instabilities, and possible remedies based on evanescent plane waves in Remark~\ref{rem:PWapprox}.
	The DtN-TDG method proposed resembles that of \cite{Kapita2018}, which addresses the scattering by bounded obstacles on a computational domain truncated with a DtN map, and that of \cite{Monk2024}, which addresses acoustic waveguides.
	
	This paper is structured as follows.
	In \S\ref{secmod}, we describe the BVP of interest, recalling known definitions and results on quasi-periodic function spaces and DtN operators, including their truncation.
	In \S\ref{s:Stability}, we derive the simple Rellich identity \eqref{rellich}; we show well-posedness under non-trapping conditions in Theorem~\ref{wellposed} extending the proof in \cite{BonnetBD1994} to the Rayleigh--Wood anomaly case; we derive the explicit stability estimate \eqref{stimaH1}.
	In \S\ref{s:TDG}, we derive the DtN-TDG following \cite{Hiptmair2011,Howarth2014,Kapita2018}; we show that the method is coercive, well-posed, and its solution satisfies a quasi-optimality bound; and we describe in detail the matrix and load-vector assembly.
	Finally, in \S\ref{s:Numerics}, we report several numerical results involving smooth and singular solutions, ill-posed BVPs, penetrable and impenetrable obstacles.
	The method has been implemented in MATLAB and the code is freely available online.
	
	\section{Model problem} \label{secmod}
	
	We present the problem of the scattering of a plane wave by a grating, i.e.\ a periodic structure.
	This problem has been studied in depth with different approaches in many articles and books such as \cite{BonnetBD1994,Civiletti2020,Kirsch1993}.
	
	We consider linear optics with $\re^{-\ri \omega t}$ dependence on time $t$, where $\ri = \sqrt{-1}$ and $\omega$ is the wave angular frequency. 
	This assumption agrees with the convention in \cite{Civiletti2020,Huttunen2002,Howarth2014}, 
	while in \cite{Hiptmair2016,Hiptmair2011,Kapita2018} the time-dependence is implicitly assumed to be $\re^{\ri \omega t}$.
	A difference between these two notations is that, with the first one, a plane wave with expression $\re^{\ri k \bx \cdot \bd}$ propagates in the direction of the unit vector $\bd$, while with the second notation the wave propagates in the opposite direction $-\bd$. This difference in the convention leads to opposite signs in the radiation conditions and in the impedance boundary conditions. 
	
	We consider electric fields in the form $\mathbf E(x_1,x_2,x_3)=(0,0,E_3(x_1,x_2))$ solving time-harmonic Maxwell equations in materials that are invariant in the direction $x_3$.
	This setting is called ``transverse-electric'' (TE) in \cite[\S1.3]{BaoLi2022} and \cite{Zhu2024}, ``transverse-magnetic'' (TM) in \cite{BonnetBD1994}, ``electric mode'' in \cite{Lechleiter2010}, ``$s$-polarized'' in \cite{Civiletti2020}.
	Maxwell equations then reduce to the two-dimensional Helmholtz equation in the variables $x_1,x_2$ for the component $u=E_3$, \cite[eq.~(2.5)]{BonnetBD1994}.
	
	\subsection{Domain, material parameters and truncation}
	\begin{figure}
		\centering
		\begin{tikzpicture}[scale=0.87,transform shape]
			\draw[-{Stealth[slant=0]}] (1.7,6.1) -- (2.3,5.3);
			\draw[dashed] (1.5,5.4) -- (2.25,5.4);
			\draw[dashed] (0,0) -- (0,5.5);
			\draw[dashed] (4,0) -- (4,5.5);
			\draw (1.82,5.4) .. controls (1.85,5.6) and (1.9,5.65) .. (2.0, 5.7);
			\node[] at (1.7,5.65) {$\theta$};
			\fill[gray!40!white, draw=black, rotate around={45:(3,1.45)}] (3,1.95) rectangle (4,2.95);
			\draw (0,3.5) -- (4,3.5);
			\draw (0,4.2) -- (1,4.5) -- (3,3.9) -- (4,4.2);
			\draw (0,0.5) -- (0.67,0.5) -- (0.67,1) -- (1.33,1) -- (1.33,1.5) -- (2.67,1.5) -- (2.67,1) -- (3.33,1) -- (3.33,0.5) -- (4,0.5);
			\fill[gray!40!white, draw=black, rotate around={45:(-1,1.45)}] (-1,1.95) rectangle (0,2.95);
			\draw (-4,3.5) -- (0,3.5);
			\draw (-4,4.2) -- (-3,4.5) -- (-1,3.9) -- (0,4.2);
			\draw (-4,0.5) -- (-3.33,0.5) -- (-3.33,1) -- (-2.67,1) -- (-2.67,1.5) -- (-1.33,1.5) -- (-1.33,1) -- (-0.67,1) -- (-0.67,0.5) -- (0,0.5);
			\fill[gray!40!white, draw=black, rotate around={45:(7,1.45)}] (7,1.95) rectangle (8,2.95);
			\draw (4,3.5) -- (8,3.5);
			\draw (4,4.2) -- (5,4.5) -- (7,3.9) -- (8,4.2);
			\draw (4,0.5) -- (4.67,0.5) -- (4.67,1) -- (5.33,1) -- (5.33,1.5) -- (6.67,1.5) -- (6.67,1) -- (7.33,1) -- (7.33,0.5) -- (8,0.5);
			\node[] at (2.6,2.5) {$D$};
			\node[] at (6.6,2.5) {$D$};
			\node[] at (-1.4,2.5) {$D$};
			\node[] at (-3,5) {$\Omega_0$};
			\draw[decorate, decoration={brace,amplitude=10}] (4,-0.2)--(0,-0.2);
			\draw(2,-.8)node{$L$};
			\draw[decorate, decoration={brace,amplitude=10}] (-4.2,0)--(-4.2,5);
			\draw(-4.9,2.5)node{$2H$};
		\end{tikzpicture}  
		\caption{Geometry of the periodic scattering region $\Omega_0$ and the Dirichlet obstacle $D$.
			Continuous lines separate regions of $\Omega_0=\IR^2\setminus\overline D$ with constant permittivity $\varepsilon$.}
		\label{fig:scatt}
	\end{figure}
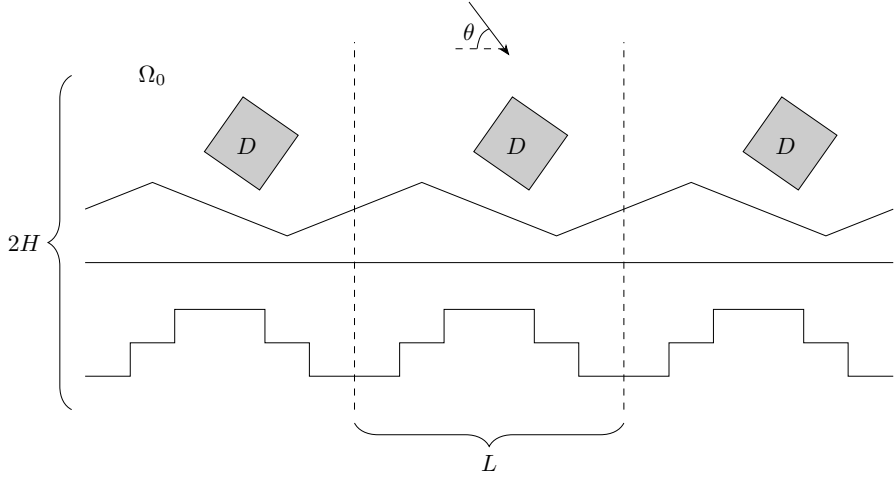

	Let $L$ and $H$ be positive parameters denoting the scatterer space period and height, respectively.
	The impenetrable (perfect electric conductor, PEC) obstacle is represented by an open Lipschitz set $D\subset\IR^2$, possibly empty, that is periodic in the $x_1$-direction with period $L>0$ (i.e.\ $(x_1,x_2)\in D\Leftrightarrow (x_1+L,x_2)\in D$) and bounded in the $x_2$ direction so that $\overline D\subset \{|x_2|<H\}$.
	The scattering region is $\Omega_0:=\IR^2\setminus \overline D$, and we assume it to be connected.
	
	Let $\varepsilon_0,\mu_0>0$ denote the permittivity and permeability of vacuum.
	The relative 
	permittivity of the medium is represented by the complex-valued, piecewise-constant function $\varepsilon\in L^\infty(\Omega_0)$ with $\Re(\varepsilon) > 0$ and $\Im(\varepsilon) \geq 0$.
	We assume that $\varepsilon$ is periodic with period $L$ in $x_1$, and that the inhomogeneity is bounded in $x_2$ i.e.\  $\varepsilon(\bx)=\varepsilon^+>0$ in $\{x_2>H\}$ and $\varepsilon(\bx)=\varepsilon^-$ in $\{x_2<-H\}$.
	We assume that the relative magnetic permeability is constant $\mu=1$ in $\Omega_0$.
	
	A possible scattering region is depicted in Figure \ref{fig:scatt}:
	the shaded region represents the impenetrable scatterer $D$;
	the continuous lines represent the interfaces, periodic in $x_1$, separating different materials, i.e.\ regions with constant $\varepsilon$.
	
	We use the parameters $L,H$ to define a truncated domain taking into account the periodicity in $x_1$ and the boundedness in $x_2$ of the scatterer:
	\begin{equation}\label{eq:Omega}
		\Omega:=\{\bx\in\Omega_0:\; 0<x_1<L,\; -H<x_2<H\}.
	\end{equation}
	We introduce also some notation for the parts of the boundary of $\Omega$ and the upper/lower regions:
	\begin{align*}
		\GpmH:=&\{\bx\in\IR^2: 0\le x_1\le L,\; x_2=\pm H \},\\
		\Gleft:=&\{\bx\in\IR^2:  x_1=0,\; -H\le x_2\le H \},\\
		\Gright:=&\{\bx\in\IR^2:  x_1=L,\; -H\le x_2\le H \},\\
		\GD:=& \partial D\cap\overline\Omega,\\
		\Omega_H^+:=&\{\bx\in\IR^2:   0\le x_1\le L,\; x_2> H \},\\
		\Omega_H^-:=&\{\bx\in\IR^2:   0\le x_1\le L,\; x_2<- H \},
	\end{align*}
	so that $\partial\Omega=\GH\cup\GmH\cup\Gleft\cup\Gright\cup\GD$ and $\varepsilon|_{\Omega_H^\pm}=\varepsilon^\pm$.
	We denote by $\bn$ the outward-pointing unit normal on $\partial\Omega$.
	
	In Figure \ref{fig:region} we see the domain $\Omega$ obtained from the truncation of the region $\Omega_0$ in Figure~\ref{fig:scatt}.
	A few other geometries in this framework are described and depicted in the numerical experiments of \S\ref{s:Numerics}
	
	\begin{figure}
		\centering
		\begin{tikzpicture}
			\node[] at (4.7,2.2) {$x_1$};
			\node[] at (-0.3,5.7) {$x_2$};
			\node[] at (-0.4,1.3) {$\Gamma_{\mathrm{left}}$};
			\node[] at (4.5,1.3) {$\Gamma_{\mathrm{right}}$};
			\node[] at (0.8, 5.3) {$\Gamma_H$};
			\node[] at (1, -0.35) {$\Gamma_{-H}$};
			\node[] at (3,4.5) {$\varepsilon^+$};
			\node[] at (1.3,3.9) {$\varepsilon_1$};
			\node[] at (1,2.8) {$\varepsilon_2$};
			\node[] at (2,3) {$\Gamma_D$};
			\node[] at (2,0.8) {$\varepsilon^-$};
			\draw [dashed,->](0,5) -- (0,5.7); 
			\draw [-{Stealth[slant=0]}, thick](2,5) -- (2,5.6);
			\node[] at (2.2,5.3) {$\mathbf{n}$};
			\draw [-{Stealth[slant=0]}, thick](2,0) -- (2,-0.6);
			\node[] at (2.2,-0.3) {$\mathbf{n}$};
			\fill[gray!40!white, draw=black, thick, rotate around={45:(3,1.45)}] (3,1.95) rectangle (4,2.95);
			\draw[-{Stealth[slant=0]}, thick, rotate around={45:(3,1.45)}](4,2.45)--(3.5,2.45);
			\draw [dashed,->](0,2.5) -- (4.7,2.5); 
			\node[] at (2.55,2.7) {$\mathbf{n}$};
			\draw (0,3.5) -- (4,3.5);
			\draw (0,4.2) -- (1,4.5) -- (3,3.9) -- (4,4.2);
			\draw (0,0.5) -- (0.67,0.5) -- (0.67,1) -- (1.33,1) -- (1.33,1.5) -- (2.67,1.5) -- (2.67,1) -- (3.33,1) -- (3.33,0.5) -- (4,0.5);
			\draw[thick] (0,0) -- (4,0) -- (4,5) -- (0,5) -- (0,0);
			\node[] at (2.65,2.2) {$D$};
			\draw (0,0) node[left]{$-H$};
			\draw (0,5) node[left]{$H$};
			\draw (0,0) node[below]{$0$};
			\draw (4,0) node[below]{$L$};
			\draw (1,1.7) node[left]{$\Omega$};
		\end{tikzpicture}
		\caption{The truncated domain $\Omega=(0,L)\times(-H,H)\setminus \overline D$. The relative permittivity assumes the values $\varepsilon^+$, $\varepsilon^-$, $\varepsilon_1$ and $\varepsilon_2$ in the four regions delimited by the continuous lines.}
		\label{fig:region}
	\end{figure}
	
	\subsection{Helmholtz equation and quasi-periodic conditions}
	The source of the scattering problem is a downward-propagating plane wave:
	\begin{equation}\label{incident}
		\bE^\inc:=(0,0,u^\inc),\qquad
		u^\inc(\bx):=u^\inc(x_1,x_2)=\exp\{\ri\kappa^+(x_1 \cos \theta + x_2 \sin \theta)\},
	\end{equation} 
	where 
	\begin{align*}
		\theta \in \ & [-\pi, 0] && \text{is the wave propagation angle against the horizontal},\\
		\kappa^\pm:=\ &k\sqrt{\varepsilon^\pm} &&\text{is the wavenumber in the upper/lower region }\Omega_H^{\pm},\\
		k:=\ &\omega/c_0 &&\text{is the wavenumber in free space},\\
		\omega>\ &0 &&\text{is the angular frequency, and}\\
		c_0:=\ & 1/\sqrt{\varepsilon_0\mu_0} &&\text{is the light speed in free space.}
	\end{align*}
	We define the piecewise-constant wavenumber function $\kappa(\bx):=k\sqrt{\varepsilon(\bx)}\in L^\infty(\Omega_0)$. 
	
	We denote by $u$ the third component of the total electric field $\mathbf E=(0,0,u)$ generated by the scattering of $\bE^\inc$ on $D$ and on the heterogeneities of $\varepsilon$.
	From the Maxwell equations $\mathrm{curl}(\mu_0^{-1}\mathrm{curl}\,\bE)-\omega^2\varepsilon\varepsilon_0\bE=\mathbf0$, it is classical (e.g.\ \cite{BonnetBD1994,Civiletti2020}) that $u$ satisfies the Helmholtz equation 
	$$
	\Delta u+k^2\varepsilon u=0 \qquad \text{in }\Omega_0,
	$$
	which has to be complemented with appropriate boundary, radiation and periodicity condition.
	
	We assume that the obstacle $D$ is a perfect electric conductor (PEC), so we impose the boundary condition $u=0$ on $\partial D$.
	
	While the domain and the material parameters are periodic in the $x_1$ direction with period $L$, the incoming wave $u^\inc$ is periodic in the same direction with period $2\pi/(\kappa^+\cos\theta)$, which is in general different from $L$.
	However, we observe that $u^\inc$ satisfies the relation $u^\inc(x_1+L,x_2)=\re^{\ri\kappa^+L\cos\theta}u^\inc(x_1,x_2)$.
	This suggests the following classical definition \cite{Pinto2020}.
	
	\begin{defin}[Quasi-periodic function]\label{def:QP}
		A function $u\in \mathcal{C}^0(\IR^2)$ is called quasi-periodic in $\IR^2$, of period $L$, with parameter $\alpha_0>0$, if
		\begin{equation*}
			u(x_1+L,x_2)=\re^{\ri\alpha_0 L}u(x_1,x_2) \qquad \forall \bx = (x_1,x_2) \in \IR^2.
		\end{equation*}
		A function $u\in \mathcal{C}^0(\Omega_0)$ is quasi-periodic in $\Omega_0$ if the same relation holds for all $\bx\in\Omega_0$.
	\end{defin} 
	For a quasi-periodic function $u$, it holds that 
	$u(x_1+nL,x_2)=\re^{\ri n\alpha_0 L}u(x_1,x_2)$ for all $n\in\IZ$ and $\bx$ in its domain.
	The quasi-periodicity of $u$ is equivalent to the periodicity with period $L$ of the function $x_1 \mapsto \re^{-\ri\alpha_0 x_1}u(x_1,x_2)$.
	
	Since $u^\inc$ is quasi-periodic with parameter $\alpha_0=\kappa^+\cos\theta$, also $u$ is quasi-periodic with the same parameter.
	To enforce this property, we restrict the Helmholtz equation to the bounded domain $\Omega$ in \eqref{eq:Omega} and impose the following conditions on the vertical sides $\Gleft$, $\Gright$:
	\begin{align*}
		\begin{aligned}
			u(L,x_2) =&\ \re^{\ri\alpha_0 L}u(0,x_2) \\ 
			\dxo(L,x_2) =&\ \re^{\ri\alpha_0 L}\dxo(0,x_2) 
		\end{aligned}
		\qquad |x_2|<H, \; (0,x_2)\notin\overline D.
	\end{align*}
	
	We are left to impose a radiation condition on $\GpmH$ to ensure that the scattered field propagates away from the scatterer $D$ and the inhomogeneity of $\varepsilon$.
	To this purpose, we need to introduce the space of quasi-periodic functions and the Dirichlet-to-Neumann operator.
	
	\subsection{Quasi-periodic function spaces}
	We introduce the quasi-periodic Sobolev spaces on $\Omega$ and on the horizontal boundaries $\GpmH$.
	
	Following \cite[\S3.1]{BonnetBD1994}, for $\alpha_0>0$, we introduce the following function spaces:
	\begin{enumerate}
		\item $\mathcal{C}_{\alpha_0}^\infty(\mathbb{R}^2)$ is the set of all functions that are $\mathcal{C}^\infty$ on $\mathbb{R}^2$, quasi-periodic with parameter $\alpha_0$, and vanish for large $|x_2|$;
		\item $\mathcal{C}_{\alpha_0}^\infty(\Omega)$ is the set of the restrictions to $\Omega$ of all functions of $\mathcal{C}_{\alpha_0}^\infty(\mathbb{R}^2)$;
		\item $H^1_{\alpha_0}(\Omega)$ is the smallest  closed subspace of $H^1(\Omega)$ that contains $\mathcal{C}_{\alpha_0}^\infty(\Omega)$;
		\item $H^1_{\alpha_0, 0}(\Omega)$ is the space of all functions $u \in H^1_{\alpha_0}(\Omega)$ 
		whose trace on $\GD$ vanishes.
	\end{enumerate}
	The spaces $H^1_{\alpha_0}(\Omega)$ and  $H^1_{\alpha_0,0}(\Omega)$ are Hilbert spaces with the usual $H^1(\Omega)$ norm and inner product.
	See \cite{Pinto2020} for more general quasi-periodic Sobolev spaces and their properties.
	
	In the following, we will omit explicit notation for the Dirichlet traces of $H^1_{\alpha_0}(\Omega)$ functions on parts of $\partial\Omega$.
	
	The elements of $\mathcal{C}_{\alpha_0}^\infty(\mathbb{R}^2)$ may formally be written as Fourier series; we refer to \cite[Prop.~2.6]{Pinto2020} for the proof of the following proposition.
	\begin{prop}[Fourier expansion] \label{fourier}
		Every $u \in \mathcal{C}_{\alpha_0}^\infty(\mathbb{R}^2)$ may be represented as a Fourier series, i.e.
		\begin{equation*}
			u(x_1, x_2) = \sum_{n \in \mathbb{Z}} u_n(x_2) \re^{\ri \alpha_n x_1}, \hspace{0.3cm} \text{where} \hspace{0.2cm} \alpha_n := \alpha_0 + \frac{2\pi n}{L} \hspace{0.2cm} \text{for} \hspace{0.2cm} n \in \mathbb{Z}.
		\end{equation*}
		The coefficients $u_n$ are defined as
		\begin{equation} \label{fourier_coeff}
			u_n(x_2) := \frac{1}{L} \int_0^{L} \re^{-\ri\alpha_n x_1} u(x_1,x_2) \, \rd x_1, \hspace{0.2cm} \text{for} \hspace{0.2cm} n \in \mathbb{Z}.
		\end{equation}
	\end{prop}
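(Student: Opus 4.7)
The plan is to reduce the quasi-periodic expansion to the classical Fourier expansion of smooth $L$-periodic functions via a modulation trick. The modulation that ``un-twists'' the quasi-periodicity is multiplication by $\re^{-\ri\alpha_0 x_1}$.

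First, I would set $v(x_1,x_2) := \re^{-\ri\alpha_0 x_1}\, u(x_1,x_2)$. Applying the defining relation of Definition~\ref{def:QP} to $u$ gives $v(x_1+L,x_2) = \re^{-\ri\alpha_0(x_1+L)} \re^{\ri\alpha_0 L} u(x_1,x_2) = v(x_1,x_2)$ for all $(x_1,x_2) \in \IR^2$, so $v$ is $L$-periodic in $x_1$. Since $u \in \mathcal{C}_{\alpha_0}^\infty(\IR^2)$ and the modulating factor is $C^\infty$, the product $v$ is itself $C^\infty$ on $\IR^2$ and vanishes for large $|x_2|$.

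Next, for each fixed $x_2$, the map $x_1 \mapsto v(x_1,x_2)$ is a smooth $L$-periodic function of a single real variable, so the classical theory of Fourier series on the torus $\IR/L\IZ$ applies:
\[
v(x_1,x_2) = \sum_{n \in \IZ} v_n(x_2)\, \re^{2\pi \ri n x_1/L},
\qquad
v_n(x_2) := \frac{1}{L}\int_0^L \re^{-2\pi \ri n x_1/L}\, v(x_1,x_2)\rd x_1,
\]
with rapid decay of $|v_n(x_2)|$ in $n$ ensuring absolute uniform convergence of the series and all its $x_1$-derivatives. Multiplying both sides by $\re^{\ri\alpha_0 x_1}$ and using $\alpha_n = \alpha_0 + 2\pi n/L$ yields $u(x_1,x_2) = \sum_{n\in\IZ} u_n(x_2)\, \re^{\ri\alpha_n x_1}$ with $u_n(x_2) = v_n(x_2)$; substituting the definition of $v$ into the integral for $v_n$ then produces exactly formula~\eqref{fourier_coeff}.

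I do not expect any serious obstacle in this argument. Smoothness of the coefficients $u_n(x_2)$ and their decay as $|x_2|\to\infty$ follow from differentiation under the integral sign and from the corresponding properties of $u$. The only point requiring some care is matching the mode of convergence (uniform, $C^k$, or $L^2$ in $x_1$) needed for later use to what is inherited from the classical Fourier theory for smooth periodic functions; since $v(\cdot,x_2)$ is $C^\infty$ with values in a space of functions vanishing at infinity in $x_2$, one gets uniform convergence of the series together with all of its derivatives on any compact subset of $\IR^2$, which is more than enough for the subsequent applications in the paper.
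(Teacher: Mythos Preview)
Your argument is correct and is precisely the standard ``modulation trick'' that reduces quasi-periodic Fourier expansion to the classical $L$-periodic case; in fact the paper notes just after Definition~\ref{def:QP} that quasi-periodicity of $u$ is equivalent to periodicity of $x_1\mapsto\re^{-\ri\alpha_0 x_1}u(x_1,x_2)$, which is exactly the observation you build on. The paper does not give its own proof of this proposition but simply cites \cite[Prop.~2.6]{Pinto2020}, so there is no in-paper argument to compare against; your write-up would serve as a self-contained proof.
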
 
	
	We introduce the quasi-periodic fractional Sobolev spaces on one-dimensional boundaries, which will be used to define the Dirichlet-to-Neumann operator:
	\begin{equation*}
		H^{1/2}_{\alpha_0}(\GpmH) := \left\{ v \in L^2(\GpmH), \hspace{0.15cm} v(x_1) = \sum_{n \in \mathbb{Z}} v_n \re^{\ri \alpha_n x_1} \; \biggl| \; \sum_{n \in \mathbb{Z}} (1+\alpha_n^2)^{1/2} \, |v_n|^2 < \infty \right\}.
	\end{equation*}
	$H^{1/2}_{\alpha_0}(\GpmH)$ is a closed subspace of the usual Sobolev space $H^{1/2}(\GpmH)$ and the norm
	\begin{equation*}
		\| v \|_{1/2, \alpha_0}^2 := L \sum_{n \in \mathbb{Z}} (1+\alpha_n^2)^{1/2} \, |v_n|^2,
	\end{equation*}
	is equivalent to the classical $H^{1/2}(\GpmH)$-norm. 
	Its dual space is
	\begin{equation*}
		H^{-1/2}_{\alpha_0}(\GpmH) := \left\{ v(x_1) = \sum_{n \in \mathbb{Z}} v_n \re^{\ri \alpha_n x_1} \hspace{0.15cm} \biggl| \hspace{0.15cm} \sum_{n \in \mathbb{Z}} (1+\alpha_n^2)^{-1/2} \, |v_n|^2 < \infty \right\},
	\end{equation*}
	and the associated norm is
	\begin{equation*}
		\| v \|_{-1/2, \alpha_0}^2 := L \sum_{n \in \mathbb{Z}} (1+\alpha_n^2)^{-1/2} \, |v_n|^2.
	\end{equation*}
	It can be proved that $H^{1/2}_{\alpha_0}(\GpmH)$ is the space of the traces on $\GpmH$ of all functions of $H^1_{\alpha_0}(\Omega)$; see \cite[Lemma~2.27]{Pinto2020}.
	The duality product between $H^{1/2}_{\alpha_0}(\GpmH)$ and $H^{-1/2}_{\alpha_0}(\GpmH)$ is given by 
	\begin{equation*}
		\langle u , v \rangle_{\alpha_0, \GpmH} = L \sum_{n \in \mathbb{Z}} u_n
		\overline{v_n} 
	,\qquad\text{for }
	u(x_1)= \sum_{n \in \mathbb{Z}} u_n \re^{\ri \alpha_n x_1}
	, \quad v(x_1)= \sum_{n \in \mathbb{Z}} v_n \re^{\ri \alpha_n x_1}
	.
\end{equation*}
More generally, arbitrary-order quasi-periodic Sobolev spaces can be defined:
\begin{align}\label{eq:HsNorm}
	H^s_{\alpha_0}(\GpmH) := 
	\bigg\{ v(x_1) = \sum_{n \in \mathbb{Z}} v_n \re^{\ri \alpha_n x_1} \; \biggl| \;
	\|v\|_{s,\alpha_0}^2:=L\sum_{n \in \mathbb{Z}} (1+\alpha_n^2)^s \, |v_n|^2 < \infty \bigg\}
	\qquad\forall s\in\IR.
\end{align}

\subsection{Dirichlet-to-Neumann operators}

We specify a radiation condition on $\GpmH$, as we want that the scattered field propagates upward on $\GH$ and the total field propagates downward on $\GmH$; this is equivalent to asking that these two boundaries are transparent to waves propagating away from the grating.
To specify this condition we make use of Dirichlet-to-Neumann operators.

We first focus on the boundary condition on $\GH$. 
Since $u$ is $\alpha_0$-quasi-periodic in $\Omega$, we can write
\begin{equation*}
u(\bx)=\sum_{n\in\mathbb{Z}} u_n(x_2) \, \re^{\ri\alpha_n x_1}, \hspace{0.2cm} \text{for} \hspace{0.2cm} 
\bx\in \GH,
\end{equation*}
with $\alpha_n$ and $u_n(x_2)$ as in Proposition \ref{fourier} and $\alpha_0=\kappa^+\cos\theta$. 
In $\Omega_H^+$ we write the total field $u$ as the sum of the incident field \eqref{incident} and a scattered field:
\begin{equation*}
u = u^\inc+u^\scat.
\end{equation*}
(In $\Omega$ and $\Omega_H^-$ we do not use this decomposition.)
Since both $u$ and $u^\inc$ are smooth, quasi-periodic solutions of the Helmholtz equation in $\Omega_H^+$, also their difference $u^\scat$ enjoys the same properties, thus
\begin{equation*}
u^\scat(\bx)=\sum_{n\in\mathbb{Z}} u^\scat_n(x_2) \, \re^{\ri\alpha_n x_1}, \hspace{0.2cm} \bx\in \Omega_H^+
\end{equation*}
for $u^\scat_n\in \mathcal C^\infty(H,\infty)$. From
$\Delta u^\scat + k^2 \varepsilon^+ u^\scat = 0$ for $x_2 > H$, the Fourier coefficients of $u^\scat$ must solve the differential equation
\begin{equation*}
{\partial^2_{x_2} u^\scat_n} + (k^2 \varepsilon^+ - \alpha^2_n) \, u^\scat_n = 0 \hspace{0.3cm} \text{for}
\hspace{0.2cm} x_2 > H.
\end{equation*}
Since each of these equations admits a 2-dimensional solution space, we select a particular solution as follows:
\begin{itemize}
\item[(i)] if $(k^2 \varepsilon^+ - \alpha^2_n)<0$, we select the exponentially decaying solution
\begin{equation*}
	u^\scat_n(x_2) = u_n^\scat(H) \re^{-\sqrt{\alpha^2_n-k^2 \varepsilon^+}(x_2-H)},
\end{equation*}
\item[(ii)] if $(k^2 \varepsilon^+ - \alpha^2_n)=0$, we choose the constant
\begin{equation*}
	u^\scat_n(x_2) = u_n^\scat(H),
\end{equation*}
\item[(iii)] if $(k^2 \varepsilon^+ - \alpha^2_n)>0$, we opt for the solution corresponding to an outgoing wave
\begin{equation*}
	u^\scat_n(x_2) = u_n^\scat(H) \re^{\ri\sqrt{k^2 \varepsilon^+-\alpha^2_n}(x_2-H)}.
\end{equation*}
\end{itemize}
The wave terms corresponding to case (iii) propagate upwards in $\Omega_H^+$ because of the $\re^{-\ri\omega t}$ time convention stipulated.
With these choices, the scattered field has the form
\begin{equation} \label{representation}
u^\scat(\bx)=\sum_{n\in\mathbb{Z}} u^\scat_n(H)\re^{\ri\beta_n^+ (x_2-H)} \re^{\ri\alpha_n x_1},    \qquad \bx\in\Omega_H^+ \cup \GH,
\end{equation}
where
\begin{equation} \label{beta_piu}
\beta_n^+ := \begin{cases} \sqrt{k^2\varepsilon^+-\alpha_n^2} & \alpha_n^2 \leq k^2\varepsilon^+, \\ 
	\ri\sqrt{\alpha_n^2-k^2\varepsilon^+} & \alpha_n^2 > k^2\varepsilon^+. \end{cases} 
	\end{equation}
	In particular, $\beta_0^+=-\kappa^+\sin\theta$ is real and positive.
	The normal derivative on $\GH$ of $u^\scat$ in \eqref{representation} is
	\begin{equation}\label{eq:dusdn}
\dxt u^\scat (x_1)
= \ri\sum_{n\in\mathbb{Z}} u^\scat_n(H) \beta_n^+ \re^{\ri\alpha_nx_1}.
\end{equation}
We want to enforce the relation between the value \eqref{representation} of $u^\scat$ and that of its normal derivative \eqref{eq:dusdn} in the formulation of the scattering problem,
so we define the Dirichlet-to-Neumann (DtN) operator $T^+$ as
\begin{align} \label{DtN}
& T^+:H^{1/2}_{\alpha_0}(\GH) \to H^{-1/2}_{\alpha_0}(\GH), \\ \nonumber
& (T^+\phi)(x_1) := \ri \sum_{n\in\mathbb{Z}} \phi_n \beta_n^+\re^{\ri\alpha_nx_1}, 
\hspace{0.5cm} \text{for} \hspace{0.2cm} 
\phi(x_1)=\sum_{n\in\IZ}\phi_n \re^{\ri\alpha_nx_1} \in H^{1/2}(\GH).
\end{align}

We say that a Helmholtz solution $u^\scat$ in $\Omega_H^+$ propagates upwards---equivalently, that it satisfies the radiation condition in $\Omega_H^+$---if 
$\gamma_H(\dxt u^\scat)=T^+(\gamma_H u^\scat)$, where $\gamma_H$ is the trace on $\GH$. 
In this case, $u^\scat$ admits the expansion \eqref{representation}.

The linear growth of the sequence $n\mapsto|\beta_n^+|$ in \eqref{beta_piu} (recall the definition of $\alpha_n$ in Proposition~\ref{fourier}) gives the following continuity result.
\begin{lem}[{\cite[Lemma 3.7]{Pinto2020}}]
The operator $T^+:H^{1/2}_{\alpha_0}(\GH) \to H^{-1/2}_{\alpha_0}(\GH)$ is continuous.
\end{lem}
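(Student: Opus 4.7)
The plan is to unpack both norms in terms of Fourier coefficients and reduce the bound to a uniform estimate on the multipliers $\beta_n^+$.

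First, given $\phi\in H^{1/2}_{\alpha_0}(\GH)$ with Fourier coefficients $\phi_n$, I read off from the definition of $T^+$ that the $n$-th Fourier coefficient of $T^+\phi$ is $\ri\beta_n^+\phi_n$. Therefore, by the definition of the $H^{-1/2}_{\alpha_0}$ norm,
\begin{equation*}
\|T^+\phi\|_{-1/2,\alpha_0}^2 = L\sum_{n\in\IZ}(1+\alpha_n^2)^{-1/2}\,|\beta_n^+|^2\,|\phi_n|^2.
\end{equation*}
Comparing with $\|\phi\|_{1/2,\alpha_0}^2 = L\sum_{n\in\IZ}(1+\alpha_n^2)^{1/2}|\phi_n|^2$, it is enough to show that the ratio $|\beta_n^+|^2/(1+\alpha_n^2)$ is bounded uniformly in $n$.

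From the definition \eqref{beta_piu}, and since $\varepsilon^+>0$ is real, one has in all three cases
\begin{equation*}
|\beta_n^+|^2 = |k^2\varepsilon^+-\alpha_n^2| \le k^2\varepsilon^+ + \alpha_n^2 \le \max(1,k^2\varepsilon^+)\,(1+\alpha_n^2),
\end{equation*}
so the ratio is bounded by $C^2:=\max(1,k^2\varepsilon^+)$ independently of $n$. Substituting this pointwise estimate into the previous display gives
\begin{equation*}
\|T^+\phi\|_{-1/2,\alpha_0}^2 \le C^2\,L\sum_{n\in\IZ}(1+\alpha_n^2)^{1/2}|\phi_n|^2 = C^2\,\|\phi\|_{1/2,\alpha_0}^2,
\end{equation*}
which is the desired continuity bound.

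There is no real obstacle: the only thing to be careful about is that $\varepsilon^+$ is real and positive, so $\beta_n^+$ is either purely real or purely imaginary and $|\beta_n^+|^2=|k^2\varepsilon^+-\alpha_n^2|$ in every regime, including the degenerate case $\alpha_n^2=k^2\varepsilon^+$. The linear growth of $|\beta_n^+|$ in $|\alpha_n|$ (as indicated in the statement preceding the lemma) is exactly what matches the $H^{1/2}\to H^{-1/2}$ loss of one derivative.
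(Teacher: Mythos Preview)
Your proof is correct and is precisely the standard Fourier-multiplier argument the paper alludes to in the sentence preceding the lemma (``The linear growth of the sequence $n\mapsto|\beta_n^+|$\ldots''); the paper itself does not write out a proof but simply cites \cite[Lemma~3.7]{Pinto2020}.
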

More generally, recalling \eqref{eq:HsNorm}, $T^+:H^s_{\alpha_0}(\GH) \to H^{s-1}_{\alpha_0}(\GH)$ is continuous for all $s\in\IR$.

We derive the expression of the DtN operator $T^-$ on $\GmH$.
In this case, we require the total field $u$---without subtracting the incoming wave $u^\inc$---to propagate downwards. 
Using a Fourier expansion and imposing that the total field solves the Helmholtz equation in $\Omega_H^-$ with $\varepsilon=\varepsilon^-$, we look for $u$ in the form
\begin{equation*}
u(\bx)=\sum_{n\in\mathbb{Z}} u_n(-H)\re^{-\ri\beta_n^- (x_2+H)} \re^{\ri\alpha_n x_1},    \qquad \bx\in\Omega_H^-\cup\GmH.
\end{equation*}
Selecting the solutions similarly to (i)--(iii) above, if $\varepsilon^->0$ we obtain the coefficients
$\beta_n^-$ as
\begin{equation} \label{beta_meno}
\beta_n^- := \begin{cases} \sqrt{k^2\varepsilon^- - \alpha_n^2} & \alpha_n^2 \leq k^2\varepsilon^-, \\ 
	\ri\sqrt{\alpha_n^2-k^2\varepsilon^-} & \alpha_n^2 > k^2\varepsilon^-. \end{cases} 
	\end{equation}
	Instead, if $\varepsilon^-\notin\IR$, then the lower region $\Omega_H^-$ contains an absorbing medium, so all outgoing solutions decay exponentially for $x_2\to-\infty$.
	In this case we set 
	\begin{equation}\label{eq:betaComplEps}
\beta_n^-:=\sqrt{k^2\varepsilon^--\alpha_n^2} 
\end{equation}
choosing the complex root with positive imaginary part, i.e.\ $\Im\beta_n^->0$.
Since $k>0,\alpha_n\in\IR$ and $\Im\varepsilon^->0$, this is the standard branch cut of the square root, and we also have $\Re\beta_n^->0$.
Then, the DtN operator $T^-$ is
\begin{align} \label{DtN_meno}
& T^-:H^{1/2}_{\alpha_0}(\GmH) \to H^{-1/2}_{\alpha_0}(\GmH), \\ \nonumber
& (T^- \phi)(x_1) := \ri \sum_{n\in\mathbb{Z}} \phi_n \beta_n^- \re^{\ri\alpha_n x_1}, \hspace{0.5cm} \text{for} \hspace{0.2cm} 
\phi(x_1)=\sum_{n\in\IZ}\phi_n  \re^{\ri\alpha_nx_1} \in H^{1/2}_{\alpha_0}(\GmH),
\end{align}
and enjoys the same continuity property of $T^+$.
Note that the parameters $\alpha_n=k\sqrt{\varepsilon^+}\cos\theta+\frac{2\pi n}L$ enter the definition of the DtN operator $T^-$ on the lower boundary $\GmH$, but they depend on the value $\varepsilon^+$ of the material parameter $\varepsilon$ in the upper region $\Omega_H^+$, while $\varepsilon^-$ enters $T^-$ via the $\beta_n^-$ coefficients \eqref{beta_meno}--\eqref{eq:betaComplEps}.

In practical computations, we need to truncate the infinite series in the definitions of the DtN operators (\ref{DtN}) and (\ref{DtN_meno}), so we introduce the following operators

\begin{defin}[Truncated DtN operator]\label{def:TruncatedDtN}
For $M\in\IN$, we set
\begin{equation} \label{DtN_trunc}
	(T_M^\pm \phi)(x_1) := \ri \sum_{n=-M}^M \phi_n \beta_n^\pm \re^{\ri\alpha_nx_1},
	\qquad\forall \phi(x_1)=\sum_{n\in\IZ}\phi_n\re^{\ri\alpha_nx_1} \in H^{1/2}_{\alpha_0}(\GpmH).
\end{equation}
\end{defin}

\begin{lem} \label{lemma:l2prod}
Assuming $\varepsilon$ is real and positive, for every $w \in H^1_{\alpha_0} (\Omega)$,
\begin{align}\label{eq:ImT}
	\Im \int_\GpmH T^\pm w \, \cw  \rd s =
	L\sum_{\alpha^2_n < k^2\varepsilon^\pm} |w_n(\pm H)|^2 \sqrt{k^2\varepsilon^\pm-\alpha_n^2} \geq 0,\\ \label{eq:ReT}
	\Re \int_\GpmH T^\pm w \, \cw  \rd s =
	-L\sum_{\alpha^2_n > k^2\varepsilon^\pm} |w_n(\pm H)|^2 \sqrt{\alpha_n^2-k^2\varepsilon^\pm} \leq 0.
\end{align}
Moreover, for every $M \in \IN$,
\begin{align}
	\Im \int_\GpmH T^\pm w \, \cw  \rd s \geq \Im \int_\GpmH T_M^\pm w \, \cw  \rd s \geq 0,\\ 
	\Re \int_\GpmH T^\pm w \, \cw  \rd s \leq \Re \int_\GpmH T_M^\pm w \, \cw  \rd s \leq 0.
\end{align}
\end{lem}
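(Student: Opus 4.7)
The plan is to expand $w$ in a Fourier series on $\GpmH$ and combine it with the orthogonality of the $\alpha_0$-quasi-periodic exponentials, then to do a three-case analysis on the sign of $k^2\varepsilon^\pm-\alpha_n^2$.

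First, writing $w(x_1,\pm H)=\sum_{n\in\IZ}w_n(\pm H)\re^{\ri\alpha_n x_1}$, the definitions \eqref{DtN} and \eqref{DtN_meno} give $(T^\pm w)(x_1)=\ri\sum_{n\in\IZ}w_n(\pm H)\beta_n^\pm\re^{\ri\alpha_n x_1}$. The orthogonality $\int_0^L\re^{\ri(\alpha_n-\alpha_m)x_1}\rd x_1=L\delta_{nm}$ then yields the Parseval-type identity
\begin{equation*}
\int_\GpmH T^\pm w\,\cw\rd s=\ri L\sum_{n\in\IZ}|w_n(\pm H)|^2\,\beta_n^\pm,
\end{equation*}
where the interchange of summation and integration is nothing but the expression of the duality pairing between $H^{-1/2}_{\alpha_0}(\GpmH)$ and $H^{1/2}_{\alpha_0}(\GpmH)$ already recorded in the excerpt, applied to elements with absolutely convergent Fourier series and then extended by density using the continuity of $T^\pm$.

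Next, I would split this sum according to the three cases in \eqref{beta_piu}--\eqref{beta_meno}, which apply since $\varepsilon^\pm>0$ is real. When $\alpha_n^2<k^2\varepsilon^\pm$, $\beta_n^\pm=\sqrt{k^2\varepsilon^\pm-\alpha_n^2}$ is real positive, so $\ri\beta_n^\pm$ contributes only to the imaginary part, with the positive coefficient $\sqrt{k^2\varepsilon^\pm-\alpha_n^2}$. When $\alpha_n^2>k^2\varepsilon^\pm$, $\beta_n^\pm=\ri\sqrt{\alpha_n^2-k^2\varepsilon^\pm}$ is purely imaginary, so $\ri\beta_n^\pm$ contributes only to the real part, with the negative coefficient $-\sqrt{\alpha_n^2-k^2\varepsilon^\pm}$. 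The Rayleigh--Wood case $\alpha_n^2=k^2\varepsilon^\pm$ contributes $0$. Taking real and imaginary parts separately yields \eqref{eq:ImT} and \eqref{eq:ReT}, together with their sign properties.

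For the truncated operator, the identical orthogonality argument gives
\begin{equation*}
\int_\GpmH T_M^\pm w\,\cw\rd s=\ri L\sum_{n=-M}^{M}|w_n(\pm H)|^2\,\beta_n^\pm,
\end{equation*}
since restricting the series defining $T_M^\pm$ to $|n|\le M$ still leaves the same Parseval kernel (cross-terms with $|m|>M$ vanish by orthogonality). Because each term in the two sums in \eqref{eq:ImT} and \eqref{eq:ReT} has a fixed sign, the partial sums over $|n|\le M$ lie between $0$ and the full value, which is precisely the chain of inequalities claimed. No real obstacle arises in this proof: everything reduces to Parseval plus elementary case analysis on $\beta_n^\pm$, and the only mild subtlety, namely the justification of the termwise integration for a generic trace in $H^{1/2}_{\alpha_0}(\GpmH)$, is handled by the continuity of $T^\pm$ recalled just before the statement.
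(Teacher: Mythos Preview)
Your proposal is correct and follows essentially the same route as the paper: compute $\int_\GpmH T^\pm w\,\cw\rd s=\ri L\sum_{n}|w_n(\pm H)|^2\beta_n^\pm$ via the orthogonality of the quasi-periodic exponentials, split into real and imaginary parts according to the sign of $k^2\varepsilon^\pm-\alpha_n^2$ in the definition of $\beta_n^\pm$, and observe that the truncated operators yield partial sums of non-negative (resp.\ non-positive) terms. The paper's proof is identical in structure, only slightly less explicit about the density/continuity justification you mention.
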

\begin{proof}
We first consider the operator $T^+$. Fix some $w\in H^1_{\alpha_0} (\Omega)$.
We recall that on $\GH$ we can write
\begin{equation*}
	w(x_1,H) = \sum_{n \in \mathbb{Z}} w_n \re^{\ri \alpha_n x_1}, \hspace{0.7cm} 
	(T^+ w)(x_1,H) = \ri \sum_{n\in\mathbb{Z}} w_n \beta_n^+ \re^{\ri\alpha_n x_1}.
\end{equation*}
Recalling that $\alpha_n=\alpha_0+\frac{2\pi n}L$ from Proposition~\ref{fourier}, we have
\begin{align*}
	\int_\GH T^+ w \hspace{0.01cm} \cw \rd s & 
	= \int_{0}^{L} \ri \sum_{n\in\mathbb{Z}} w_n \beta_n^+ \re^{\ri\alpha_nx_1} \overline{\left( \sum_{m \in \mathbb{Z}} w_m \re^{\ri \alpha_m x_1} \right)} \rd x_1 \\
	& = \sum_{n\in\mathbb{Z}} \sum_{m \in \mathbb{Z}} \ri w_n \beta_n^+ \, \overline{w_m} \int_{0}^{L} \re^{\ri(\alpha_n-\alpha_m) x_1} \rd x_1 
	= \sum_{n\in\mathbb{Z}} \ri |w_n|^2 \beta_n^+ L.
\end{align*}
From the definition of $\beta_n^+$ in (\ref{beta_piu}), we have
\begin{align*}
	\Im \int_\GH T^+ w \, \cw\rd s 
	= L \sum_{n\in\mathbb{Z}}  |w_n|^2 \Re\beta_n^+
	= L\sum_{\alpha^2_n < k^2\varepsilon^+} |w_n|^2 \sqrt{k^2\varepsilon^+-\alpha_n^2} \geq 0, \\
	\Re \int_\GH T^+ w \, \cw\rd s 
	= -L \sum_{n\in\mathbb{Z}}  |w_n|^2 \Im\beta_n^+
	= -L\sum_{\alpha^2_n > k^2\varepsilon^+} |w_n|^2 \sqrt{\alpha_n^2-k^2\varepsilon^+} \leq 0,
\end{align*}

Similarly, with $w(x_1,-H) = \sum_{n \in \mathbb{Z}} w_n^- \re^{\ri \alpha_n x_1}$, recalling \eqref{beta_meno}--\eqref{eq:betaComplEps},
$$
\Im \int_\GmH T^- w \, \cw \rd s 
=L \sum_{n\in\mathbb{Z}}  |w_n^-|^2 \Re\beta_n^-
=\begin{cases}\displaystyle
	L\sum_{\alpha^2_n < k^2\varepsilon^-} |w_n^-|^2 \sqrt{k^2\varepsilon^- -\alpha_n^2} 
	& \varepsilon^-\in\IR,\\[2mm]
	\displaystyle
	L\sum_{n\in\IZ}|w_n^-|^2 \Re\sqrt{k^2\varepsilon^- -\alpha_n^2} 
	& \varepsilon^-\notin\IR.
\end{cases}
$$
In both cases, the quantity at the right-hand side is non-negative.
Note that for $\varepsilon^-\in\IR$ the sum is finite, while for $\varepsilon^-\notin\IR$ the series converges: $w\in H^1_{\alpha_0,0}(\Omega)$ implies that its trace on $\GmH$ belongs to $H^{1/2}_{\alpha_0}(\GmH)$ so $w_n^-=o(|n|^{-1})$, while $\lim_{|n|\to\infty} \Re\sqrt{k^2\varepsilon^- -\alpha_n^2} =0$.

The operators $T^\pm_M$ are defined in \eqref{DtN_trunc} as truncations of the Fourier diagonalization of $T^\pm$, so 
$\Im \int_\GpmH T^\pm_M w \,\cw\rd s$ and $-\Re\int_\GpmH T^\pm_M w \,\cw\rd s$ can be written as the sums (of non-negative terms) above, restricted to a subset of indices.
\end{proof}

\begin{rem} \label{imag_rem}
If $\varepsilon^-\in\IR$, the expansions of $\Im \int_\GpmH T^\pm w\, \cw\rd s$ in the proof of Lemma~\ref{lemma:l2prod} are finite, so they can be replicated by the truncated operators.
In particular, if 
\begin{align}\label{eq:MLarge}
	\varepsilon^-\in\IR, \qquad 
	M\ge{M_\star:=} \frac L{2\pi}\Big(\max\{\kappa^+,\kappa^-\} +|\alpha_0|\Big),
\end{align}
then 
$$
\Im \int_\GpmH T^\pm_M w\, \cw\rd s=\Im \int_\GpmH T^\pm w\, \cw\rd s
\qquad \forall w\in H^1_{\alpha_0,0}(\Omega).
$$
This identity is key to ensure the well-posedness of the TDG scheme in Proposition~\ref{prop:norma}.
\end{rem}

Since the Fourier expansion of $u^\inc$ has only one non-zero term, $\forall M\in\IN$ 
$$\big(T^+_M u^\inc(\cdot,H)\big)(x_1)
=\big(T^+ u^\inc(\cdot,H)\big)(x_1)
=\ri\beta_0^+u^\inc(x_1,H)
=-\ri\kappa^+\sin\theta\, \re^{\ri\kappa^+(x_1\cos\theta+H\sin\theta)}.$$

\begin{prop}\label{prop:TminusTM}
Let $\varepsilon^-\in\IR$, $M\ge M_\star$ (defined in \eqref{eq:MLarge}), $s\in\IR$, $t>0$, and
$\phi(x_1)=\sum_{n\in\IZ}\phi_n\re^{\ri\alpha_nx_1}\in H^{s+t}_{\alpha_0}(\GpmH)$ (recall~\eqref{eq:HsNorm}). 
Then the $H^{s-1}_{\alpha_0}(\GpmH)$ norm of the error committed approximating $T^\pm\phi$ by $T^\pm_M\phi$ decays algebraically in $M$:
$$
\|(T^\pm-T^\pm_M)\phi\|_{s-1,\alpha_0}
\le \Big(\frac{2\pi}LM-|\alpha_0|\Big)^{-t}\|\phi\|_{s+t,\alpha_0}.
$$
\end{prop}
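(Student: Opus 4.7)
The plan is a direct Fourier-side computation. Using the definitions \eqref{DtN}, \eqref{DtN_meno}, and \eqref{DtN_trunc}, the error has the Fourier expansion
\begin{equation*}
	(T^\pm-T^\pm_M)\phi(x_1) = \ri\sum_{|n|>M}\phi_n\beta_n^\pm \re^{\ri\alpha_n x_1},
\end{equation*}
so by the definition \eqref{eq:HsNorm} of the $H^{s-1}_{\alpha_0}(\GpmH)$ norm,
\begin{equation*}
	\|(T^\pm-T^\pm_M)\phi\|_{s-1,\alpha_0}^2
	= L\sum_{|n|>M}(1+\alpha_n^2)^{s-1}\,|\beta_n^\pm|^2\,|\phi_n|^2.
\end{equation*}

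Next I would use the threshold condition $M\ge M_\star$ to control the $\beta_n^\pm$. Since $|\alpha_n|\ge \frac{2\pi|n|}{L}-|\alpha_0|$ and $|n|>M\ge M_\star = \frac{L}{2\pi}(\max\{\kappa^+,\kappa^-\}+|\alpha_0|)$, we get $\alpha_n^2> k^2\varepsilon^\pm$ (recall $\varepsilon^-\in\IR$ by assumption and $\kappa^\pm=k\sqrt{\varepsilon^\pm}$). Hence by \eqref{beta_piu} and \eqref{beta_meno}, $|\beta_n^\pm|^2=\alpha_n^2-k^2\varepsilon^\pm\le \alpha_n^2\le 1+\alpha_n^2$, which gives $(1+\alpha_n^2)^{s-1}|\beta_n^\pm|^2\le (1+\alpha_n^2)^s$.

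The final step is to extract a $t$-power in order to produce the small prefactor in $M$: writing $(1+\alpha_n^2)^s = (1+\alpha_n^2)^{s+t}(1+\alpha_n^2)^{-t}$ and using $(1+\alpha_n^2)^{-t}\le \alpha_n^{-2t}$ (valid for $t>0$), together with the lower bound $|\alpha_n|>\frac{2\pi M}{L}-|\alpha_0|$ for $|n|>M$, yields
\begin{equation*}
	\|(T^\pm-T^\pm_M)\phi\|_{s-1,\alpha_0}^2
	\le \Big(\frac{2\pi M}{L}-|\alpha_0|\Big)^{-2t}
	L\sum_{|n|>M}(1+\alpha_n^2)^{s+t}|\phi_n|^2
	\le \Big(\frac{2\pi M}{L}-|\alpha_0|\Big)^{-2t}\|\phi\|_{s+t,\alpha_0}^2,
\end{equation*}
and taking square roots gives the claim. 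There is no real obstacle here; the only subtlety is checking that the assumption $M\ge M_\star$ places all tail indices in the evanescent regime $\alpha_n^2>k^2\varepsilon^\pm$, so that the clean bound $|\beta_n^\pm|\le|\alpha_n|$ is available and we lose nothing in the decay rate.
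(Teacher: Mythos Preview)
Your proof is correct and follows essentially the same Fourier-side computation as the paper's own proof: expand the norm, use $M\ge M_\star$ to ensure $|\beta_n^\pm|^2=\alpha_n^2-k^2\varepsilon^\pm\le 1+\alpha_n^2$ for all tail indices, split off a factor $(1+\alpha_n^2)^{-t}$, and bound it using $|\alpha_n|>\frac{2\pi M}{L}-|\alpha_0|$. The only cosmetic difference is that the paper keeps the factor $(1+\alpha_n^2)^{-t}$ intact slightly longer before passing to $(\frac{2\pi}{L}M-|\alpha_0|)^{-2t}$.
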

\begin{proof}
We use the definition \eqref{eq:HsNorm} of the $H_{\alpha_0}^{s-1}(\GpmH)$ and the $H_{\alpha_0}^{s+t}(\GpmH)$ norms,
$|\beta_n^\pm|^2=\alpha_n^2-k^2\varepsilon^\pm$ for $|n|\ge M_\star$ by \eqref{beta_piu} and \eqref{beta_meno}, 
$\alpha_n=\alpha_0+\frac{2\pi}Ln$:
\begin{align*}
	\|(T^\pm-T^\pm_M)\phi\|_{s-1,\alpha_0}^2
	&=L\sum_{|n|>M}(1+\alpha_n^2)^{s-1} |\beta_n^\pm|^2 |\phi_n|^2
	\\
	&=L\sum_{|n|>M}\frac{\alpha_n^2-k^2\varepsilon^\pm}{(1+\alpha_n^2)^{1+t}} (1+\alpha_n^2)^{s+t}|\phi_n|^2
	\\
	&\le\max_{|n|>M}\frac1{(1+\alpha_n^2)^t} \|\phi\|_{s+t,\alpha_0}^2
	\le\frac1{\big(1+(\frac{2\pi}L M-|\alpha_0|)^2\big)^t} \|\phi\|_{s+t,\alpha_0}^2.
\end{align*}
\end{proof}

\subsection{Boundary value problem}

Using the DtN operators $T^\pm$, we impose the appropriate boundary conditions on $\GpmH$, obtaining the following boundary value problem: given the incident wave 
$u^\inc(\bx) = \re^{\ri \kappa^+(x_1\cos \theta + x_2\sin \theta)} 
= \re^{\ri\alpha_0 x_1 - \ri \beta_0^+ x_2}$,
with $\theta \in \left[-\pi, 0 \right]$, find $u \in H^1_{\alpha_0}(\Omega)$, with $\alpha_0 = \kappa^+ \cos \theta$, such that
\begin{equation} \label{nontrunc}
\begin{cases}
	\Delta u + k^2\varepsilon u = 0 & \hspace{0.5cm} \text{in} \; \Omega, \\
	u = 0 & \hspace{0.5cm} \text{on} \; \GD, \\
	\dn(u-u^\inc) - T^+(u-u^\inc) = 0 & \hspace{0.5cm} \text{on} \; \GH, \\
	\dn u - T^-u = 0 & \hspace{0.5cm} \text{on} \; \GmH.
\end{cases}
\end{equation}
By elliptic regularity, if the geometry singularities (i.e.\ $\partial D$ and the the discontinuities in $\varepsilon$) are away from $\GpmH$, then the solution $u$ to the Helmholtz problem \eqref{nontrunc} is smooth on $\GpmH$, and $T^\pm u-T^\pm_M u$ decays at least super-algebraically fast in $M$ by Proposition~\ref{prop:TminusTM}.

Numerically, we approximate a truncated boundary value problem, making use of the truncated DtN operators $T^\pm_M$ \eqref{DtN_trunc}: for $M\in\IN$, find $u^M \in H^1_{\alpha_0}(\Omega)$ such that
\begin{equation} \label{trunc}
\begin{cases}
	\Delta {u^M} + k^2\varepsilon {u^M} = 0 & \hspace{0.5cm} \text{in} \; \Omega, \\
	{u^M} = 0 & \hspace{0.5cm} \text{on} \; \GD, \\
	\dn ({u^M}-u^\inc) - T_M^+({u^M}-u^\inc) = 0 & \hspace{0.5cm} \text{on} \; \GH, \\
	\dn {u^M} - T_M^-{u^M} = 0 & \hspace{0.5cm} \text{on} \; \GmH.
\end{cases}
\end{equation}

\subsection{Variational formulation} 
Problem \eqref{nontrunc} can be written in variational form as in \cite[\S3.3]{BonnetBD1994}.
Let $u \in H^1_{\alpha_0, 0}(\Omega)$ be a distributional solution of the Helmholtz problem (\ref{nontrunc}). 
Multiplying both sides of the Helmholtz equation by a test function $v\in H^1_{\alpha_0, 0}(\Omega)$, and integrating by parts, the integrals over the left and right boundaries cancel by quasi-periodicity. Then, using the Dirichlet-to-Neumann operators to replace the normal derivative of the scattered field
$u^\scat=u-u^\inc$ on $\GpmH$, we obtain a weak formulation: find $u \in H^1_{\alpha_0, 0}(\Omega)$ such that
\begin{equation} \label{weak}
a_\varepsilon(u,v) = F(v) \hspace{0.5cm} \forall  v \in H^1_{\alpha_0, 0}(\Omega),
\end{equation}
where
\begin{align} \label{bilinear}
a_\varepsilon(u,v) & := 
\int_\Omega \left( \nabla u \cdot \nabla \cv - k^2\varepsilon u\cv \right) \rd\bx 
- \int_\GH  T^+u \; \cv \rd s 
- \int_\GmH T^-u \; \cv  \rd s, \\ \label{right-hand-side}
F(v) &  := -\int_\GH 2\ri\beta_0^+ u^\inc  \cv \rd s.
\end{align}
The value at the right-hand side \eqref{right-hand-side} comes from 
$-\dn u^\inc = T^+ u^\inc=\ri\beta_0^+ u^\inc$ on~$\GH$.

\section{Stability analysis} 
\label{s:Stability}
\subsection{Rellich identity}
\label{s:Uniqueness}

Following \cite{Civiletti2020, Lechleiter2010}, we prove a Rellich identity for the scattering problem (\ref{weak}) and 
we use it, together with the sign properties \eqref{eq:ImT}--\eqref{eq:ReT} of the DtN operators $T^\pm$,
to prove the well-posedness of the problem.

Suppose that $\Omega$ is divided into $P$ 
Lipschitz, connected subdomains $\Omega_j$, for $j=1,\ldots,P$, in which the relative permittivity $\varepsilon$ assumes a constant value.
Denote~by 
\begin{align*}
\Sigma:=\big\{&(j,j')\in \{1,\ldots,P\}^2: \; j<j',\\
&\Gamma_{j,j'}:=\partial\Omega_j\cap\partial\Omega_{j'} \text{ has positive 1-dimensional measure}\big\} 
\end{align*}
the index set of the interfaces between constant-$\varepsilon$ regions.
On $\Gamma_{j,j'}$ with $(j,j')\in\Sigma$, let $\bn=(n_1,n_2)$ denote the unit normal pointing from $\Omega_j$ into $\Omega_{j'}$.

\begin{lem}[Rellich identity] \label{RelLem}
Assume that $\varepsilon$ is real and positive. 
If $u\in H^1_{\alpha_0,0}(\Omega)$ is a solution to the variational problem (\ref{weak}) and the trace of $u$ on $\GpmH$ belongs to $H^1(\GpmH)$, then the following Rellich identity holds:
\begin{align} \nonumber
	& \int_\Omega 2 \left| \dxt u \right|^2 \rd\bx
	-  k^2\sum_{(j,j')\in\Sigma} ( \varepsilon_{j} -\varepsilon_{j'} ) \int_{\Gamma_{j,j'}} x_2 n_2 |u|^2\rd s
	- \int_{\GD} x_2n_2|\dn u|^2 \rd s\\
	&+ H \int_{\GH\cup\GmH} \biggl( \left|\dxo u \right|^2 - \left| \dn u \right|^2 -k^2\varepsilon |u|^2 \biggr)\rd s
	-\int_{\GH} T^+ u \cu\rd s -\int_\GmH T^- u \cu \rd s
	\nonumber \\
	& = -\int_{\GH} 2 \ri \beta_0^+ u^\inc \cu \rd s.
	\label{rellich}
\end{align}
\end{lem}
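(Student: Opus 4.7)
My plan is to apply the standard Rellich (Morawetz-type) multiplier $2\,x_2\,\dxt\cu$ to the Helmholtz equation on each subdomain $\Omega_j$ where $\varepsilon=\varepsilon_j$ is constant, then sum over $j$ and combine with the variational formulation.

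\textbf{Step 1: Local Rellich identity.} On each $\Omega_j$ I will verify the pointwise divergence identity (with $\beta=(0,x_2)$)
\[
2\Re\big[(x_2\dxt\cu)\Delta u\big]
= \nabla\cdot\!\Big[2\Re\big(x_2\dxt\cu\,\nabla u\big)-x_2\,\hat e_2\,|\nabla u|^2\Big]+|\dxo u|^2-|\dxt u|^2,
\]
which one checks by expanding $\beta\cdot\nabla|\nabla u|^2$ and using $\nabla\cdot\beta=1$. Substituting $\Delta u=-k^2\varepsilon_j u$ and using $2\Re(u\,\dxt\cu)=\dxt|u|^2$, another integration by parts converts $\int_{\Omega_j}k^2\varepsilon_j x_2 \dxt|u|^2$ into boundary plus bulk terms. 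The result, after rearranging on each $\Omega_j$, is
\[
\int_{\Omega_j}\!\big(|\dxt u|^2-|\dxo u|^2+k^2\varepsilon_j|u|^2\big)\rd\bx
=\int_{\partial\Omega_j}\!\big[2\Re(x_2\dxt\cu\,\dnK u)-x_2n_2|\nabla u|^2+k^2\varepsilon_j x_2 n_2|u|^2\big]\rd s,
\]
where $\bn_K$ is the outward normal on $\partial\Omega_j$.

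\textbf{Step 2: Summing over $j$ and handling interfaces.} Summing over $j$, each interior interface $\Gamma_{j,j'}$ is traversed twice with opposite normals. Since $u$ is a distributional Helmholtz solution with constant-$\varepsilon$ pieces, the transmission conditions give continuity of both $u$ and $\dn u$ across $\Gamma_{j,j'}$, whence $\nabla u$ is single-valued there. Therefore the first two integrands cancel across each interior interface, while the $k^2\varepsilon_j x_2 n_2|u|^2$ term leaves only the jump $k^2(\varepsilon_j-\varepsilon_{j'})x_2 n_2|u|^2$.

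\textbf{Step 3: External boundary contributions.} I will treat $\partial\Omega$ piece by piece: on $\Gleft\cup\Gright$ one has $n_2=0$, killing the $x_2 n_2$ terms, and the remaining $2\Re(x_2\dxt\cu\,\dn u)$ contributions cancel by the quasi-periodicity of $u$ and $\dxo u$ (the phases $\re^{\pm\ri\alpha_0 L}$ multiply to $1$). On $\GD$, $u=0$ makes the $k^2\varepsilon$ term vanish; the vanishing of tangential derivatives yields $|\nabla u|^2=|\dn u|^2$ and $\dxt\cu=n_2\dn\cu$, leaving exactly $\int_{\GD}x_2 n_2|\dn u|^2\rd s$. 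On $\GpmH$ one has $x_2 n_2=H$ and $\dn u=\pm\dxt u$, so $2\Re(x_2\dxt\cu\,\dn u)=2H|\dn u|^2$ and $|\nabla u|^2=|\dxo u|^2+|\dn u|^2$, producing $H\int_{\GpmH}\!\big(|\dn u|^2-|\dxo u|^2+k^2\varepsilon|u|^2\big)\rd s$.

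\textbf{Step 4: Use of the variational formulation.} Writing $|\dxt u|^2-|\dxo u|^2+k^2\varepsilon|u|^2=2|\dxt u|^2-(|\nabla u|^2-k^2\varepsilon|u|^2)$ and substituting the identity $a_\varepsilon(u,u)=F(u)$ from \eqref{weak}--\eqref{right-hand-side} to replace $\int_\Omega(|\nabla u|^2-k^2\varepsilon|u|^2)\rd\bx$ by the DtN boundary terms and the $u^\inc$ source term, everything collects into \eqref{rellich} after moving signs. The main obstacle I anticipate is justifying the integrations by parts at the level of $H^1$-regularity: the hypothesis that the trace on $\GpmH$ belongs to $H^1(\GpmH)$ ensures $\dxo u\in L^2(\GpmH)$ so the boundary integrals in \eqref{rellich} make sense, while interior elliptic regularity on each $\Omega_j$ (and away from $\overline{\GpmH}\cap\overline{\GD}$ corners) justifies the pointwise divergence identity; a density/mollification argument in $\mathcal C^\infty_{\alpha_0}(\Omega_j)$ bridges any remaining rigor gap.
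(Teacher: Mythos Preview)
Your proposal is correct and follows essentially the same approach as the paper: multiply the Helmholtz equation by the Rellich multiplier $x_2\dxt u$ (or its conjugate), integrate by parts on each constant-$\varepsilon$ subdomain, exploit the transmission conditions on the $\Gamma_{j,j'}$, the quasi-periodic cancellation on $\Gleft\cup\Gright$, the Dirichlet condition on $\GD$, and finally the variational identity $a_\varepsilon(u,u)=F(u)$ to replace the volume term $\int_\Omega(|\nabla u|^2-k^2\varepsilon|u|^2)$ by the DtN and source terms. The only organizational difference is that you package the computation as a pointwise divergence identity before integrating, whereas the paper integrates by parts directly; the content is the same. For the regularity justification you flag at the end, the paper's device is to test \eqref{weak} with $v\in C_0^\infty(\Omega)$ to obtain $\Delta u\in L^2(\Omega)$ and then invoke the Ne\v cas trace regularity result to get $\dn u\in L^2(\GD)$ and $\dn u\in L^2(\GpmH)$, which is a cleaner route than the mollification argument you sketch.
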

\begin{proof}
We first note that for quasi-periodic fields $v,w\in H^1_{\alpha_0}(\Omega)$, 
\begin{align*}
	v(L,x_2){\dn\cw}(L,x_2)
	& =v(L,x_2){\dxo\cw}(L,x_2)
	=\re^{\ri \alpha_0 L} v(0,x_2) \re^{-\ri \alpha_0 L} {\dxo\cw}(0,x_2)\\
	&= -v(0,x_2){\dn\cw}(0,x_2).
\end{align*}
Then all the integrals on $\Gleft$ and $\Gright$ that arise in the following integrations by parts cancel one another.

Testing the variational problem (\ref{weak}) with $v\in C^\infty_0(\Omega)$, we have $\Delta u\in L^2(\Omega)$.	By the Ne\v cas trace regularity result \cite[Theorem~4.24]{MCL00}, using that $u=0$ on $\Gamma_D$ and the assumption on the trace regularity of $u$ on $\GpmH$, we deduce $\dn u\in L^2(\Gamma_D)$ and $\dn u\in L^2(\GpmH)$.

Using the Rellich test function $x_2 \dxt u$, we have
\begin{align} \label{eqn:iniz}
	\int_\Omega x_2\dxt u \Delta \cu\rd\bx
	&=  - \int_{\Omega}\nabla \Big[x_2 \dxt u\Big] \cdot \nabla \cu\rd\bx
	+ \int_{\partial \Omega} x_2 \dxt u \dn \cu\rd s \\ \nonumber
	&=  - \int_\Omega \biggl[ \, \left| \dxt u \right|^2
	+ x_2 \nabla \left( \dxt u \right) \cdot \nabla \cu \, \biggl]\rd \bx \\
	&\quad\nonumber
	+ H \int_{{\GH\cup\GmH}} \left| \dxt u \right|^2 \rd s
	+ \int_{\GD} x_2 \dxt u \dn \cu\rd s.
\end{align}
Taking twice the real part of (\ref{eqn:iniz}) {and integrating by parts again}, we get
\begin{align} \label{eqn:realpart}
	2 \Re \int_\Omega x_2 \dxt u \Delta \cu \rd\bx
	= &  - \int_\Omega 2 \bigg(\left| \dxt u \right|^2 + x_2 \, 2 \Re \left[ \nabla \left( \dxt u \right) \cdot \nabla \cu \right]\bigg) \rd\bx\\ \nonumber
	& +  H \int_{{\GH\cup\GmH}} 2 \left| \dxt u \right|^2 \rd s
	+2\Re\int_{\GD} x_2 \dxt u \dn \cu\rd s\\ \nonumber
	= & - \int_\Omega \bigg(2\left| \dxt u \right|^2 +x_2 \dxt |\nabla u|^2\bigg)\rd\bx\\ \nonumber
	& +  H \int_{{\GH\cup\GmH}} 2 \left| \dxt u \right|^2\rd s
	+ 2\Re\int_{\GD} x_2 \dxt u \dn \cu\rd s \\ \nonumber
	= & \int_\Omega \bigg(|\nabla u|^2 - 2 \left| \dxt u \right|^2\bigg){\rd\bx}
	+ H \int_{{\GH\cup\GmH}} \biggl( - |\nabla u|^2 + 2 \left| \dxt u \right|^2 \biggl){\rd s}\\ \nonumber
	& 
	+ \int_{\GD} \biggl(2 x_2 \Re\bigg[\dxt u \dn \cu\bigg]
	{-x_2n_2|\nabla u|^2}\biggl){\rd s}.
\end{align}
Using that $\Delta u + k^2 \varepsilon u = 0$ in $\Omega$ {and integrating by parts on each region $\Omega_j$}, we get
\begin{align} \label{eqn:subs}
	2 \Re \int_\Omega x_2\dxt u \Delta \cu\rd\bx
	= & -2k^2 \int_\Omega x_2 \varepsilon  \Re \left( \dxt u {\cu} \right)\rd\bx
	= -k^2 \int_\Omega x_2 \varepsilon \dxt|u|^2\rd\bx \\ \nonumber
	= &\ k^2{\sum_{j=1}^P \int_{\Omega_j}} \dxt[x_2 \varepsilon]  |u|^2\rd\bx
	- Hk^2 \int_{{\GH\cup\GmH}} \varepsilon |u|^2\rd s
	\\ \nonumber
	& -  k^2{\sum_{(j,j')\in\Sigma} ( \varepsilon_{j} -\varepsilon_{j'} )
		\int_{\Gamma_{j,j'}}} x_2 n_2 |u|^2\rd s
	-k^2\int_\GD x_2n_2\varepsilon|u|^2\rd s.
\end{align}
The term on $\GD$ vanishes since $u\in H^1_{\alpha_0,0}(\Omega)$.
From (\ref{eqn:realpart}) and (\ref{eqn:subs}), {using that $\dxt [x_2 \varepsilon]=\varepsilon$ in each $\Omega_j$,} we get:
\begin{align}\label{A} 
	\int_\Omega \big( |\nabla u|^2 - &k^2\varepsilon{|u|^2}\big)\rd\bx
	= \int_\Omega 2 \left| \dxt u \right|^2\rd\bx
	+ H \int_{{\GH\cup\GmH}} \biggl( |\nabla u|^2 - 2 \left| \dxt u \right|^2 -k^2\varepsilon |u|^2 \biggr){\rd s} \\ \nonumber
	&+\int_{\GD}\biggl(-2x_2\Re\dxt u\dn \cu
	+x_2n_2|\nabla u|^2\biggr) \rd s
	-  k^2\sum_{(j,j')\in\Sigma} ( \varepsilon_{j} -\varepsilon_{j'} ) \int_{\Gamma_{j,j'}} x_2 n_2 |u|^2 \rd s.
\end{align}
Since $u\in H^1_{\alpha_0,0}(\Omega)$, on $\GD$
$$
\nabla u = \bn \dn u  \quad\implies\quad
\dxt u =n_2 \dn u \quad\implies\quad
\dxt u\dn \cu =n_2 |\dn u |^2,
$$
so the integral on $\GD$ is equal to $-\int_\GD x_2n_2|\dn u|^2\rd s$ (this is \cite[eq.~(3.38)]{BonnetBD1994}).
From the variational problem (\ref{weak}),
\begin{equation} \label{B}
	\int_\Omega \big(|\nabla u|^2 - k^2\varepsilon {|u|^2}\big) \rd\bx
	= \int_{\GH} T^+ u \cu\rd s +\int_\GmH T^- u \cu \rd s
	- \int_{\GH} 2 \ri \beta_0^+ u^\inc \cu\rd s.
\end{equation}
Combining (\ref{A}) and (\ref{B}) yields the Rellich identity \eqref{rellich}.
\end{proof}

The solution $u^M$ of the truncated BVP \eqref{trunc} satisfies the Rellich identity \eqref{rellich} with $T^\pm$ replaced by $T^\pm_M$.

The proof of the Rellich identity \eqref{rellich} relies on the integration by parts of the Helmholtz equation tested against the ``Rellich multiplier'' $x_2\partial_{x_2}u$, as in, e.g.\ \cite[Theorem~3.5]{BonnetBD1994}, \cite[Lemma~3.2]{Lechleiter2010} and \cite[Lemma~1]{Civiletti2020}.
This takes the name from the analogous multiplier $\bx\cdot\nabla u$ used in the context of scattering by a bounded obstacle.
In this setting, a ``Morawetz multiplier'' in the form $\bx\cdot\nabla u+\alpha u$ for a suitable scalar field $\alpha$ is typically used; see e.g.\ \cite[Lemma~3.5]{CWM08} for an example, and \cite[Remark~4.2]{MS17} for a brief history of these multipliers.

\subsection{Solution existence and uniqueness}

The following existence results is obtained from a decomposition of the bilinear form $a_\varepsilon(\cdot, \cdot)$ \cite[Lemma 3.1]{BonnetBD1994} and Fredholm analysis.
\begin{thm}[{\cite[Theorem 3.2--3.4]{BonnetBD1994}}]
Problem (\ref{weak}) has at least one solution, and the set of solutions is at most a finite-dimensional affine space. Problem (\ref{weak}) is well-posed for every value of $k$ except possibly for an increasing sequence $(k_m)_{m \geq 1}$	that tends to infinity with~$m$. 
\end{thm}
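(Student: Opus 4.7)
The plan is to apply the Fredholm alternative together with analytic Fredholm theory, via a decomposition of $a_\varepsilon$ into a coercive part and a compact remainder. First, split each DtN operator as $T^\pm = T^\pm_e + T^\pm_p$, where $T^\pm_p$ is the finite-rank piece corresponding to the propagating modes $\alpha_n^2 \le k^2\varepsilon^\pm$ (finitely many indices), and $T^\pm_e$ is the piece of evanescent modes. Lemma~\ref{lemma:l2prod} gives $-\Re \int_{\GpmH} T^\pm_e w\,\cw\rd s \ge 0$ for every $w \in H^1_{\alpha_0}(\Omega)$.

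Setting
\[
a_c(u,v) := \int_\Omega \bigl(\nabla u \cdot \nabla \cv + u\,\cv\bigr)\rd\bx - \int_{\GH} T^+_e u\,\cv\rd s - \int_{\GmH} T^-_e u\,\cv\rd s,
\]
the sign property yields $\Re a_c(u,u) \ge \|u\|_{H^1(\Omega)}^2$, so by Lax--Milgram $a_c$ induces a continuous isomorphism $A_c$. The remainder $a_\varepsilon - a_c$ equals $-\int_\Omega (k^2\varepsilon+1) u\,\cv\rd\bx - \int_{\GH} T^+_p u\,\cv\rd s - \int_{\GmH} T^-_p u\,\cv\rd s$: the volume term is compact by the Rellich--Kondrachov embedding $H^1(\Omega) \hookrightarrow L^2(\Omega)$, and the two boundary terms are finite-rank. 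Hence $A_\varepsilon = A_c + K$ with $K$ compact, so $A_\varepsilon$ is Fredholm of index zero, and (whenever non-empty) the solution set of \eqref{weak} is a finite-dimensional affine subspace.

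Existence requires the compatibility $F(\phi) = 0$ for every $\phi \in \ker A_\varepsilon^*$. This uses the special structure of the scattering right-hand side: since $u^\inc$ has only one non-zero Fourier mode on $\GH$ (index $n=0$), $F(\phi) = -2\ri\beta_0^+ L\,\re^{\ri\beta_0^+ H}\,\overline{\phi_0}$, where $\phi_0$ is the $n=0$ Fourier coefficient of $\phi|_{\GH}$. I would show $\phi_0 = 0$ for every $\phi \in \ker A_\varepsilon^*$ by testing the adjoint homogeneous problem against $\phi$ itself, taking the imaginary part, and invoking Lemma~\ref{lemma:l2prod} to force all propagating Fourier coefficients of $\phi|_{\GpmH}$ to vanish; since $n=0$ is propagating ($\alpha_0^2 = (\kappa^+\cos\theta)^2 < (\kappa^+)^2 = k^2\varepsilon^+$), this gives $\phi_0 = 0$.

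For the discreteness of the exceptional $k$, view $A_\varepsilon(k)$ as a family of Fredholm operators of index zero. Outside the Rayleigh--Wood anomalies (the discrete set where $\alpha_n^2 = k^2\varepsilon^\pm$ for some $n$), the coefficients $\beta_n^\pm$ are real-analytic in $k$, and so are $T^\pm$ and $A_\varepsilon(k)$. For $k>0$ sufficiently small, no mode propagates, $a_\varepsilon$ itself is coercive, and $A_\varepsilon(k)$ is invertible. The analytic Fredholm theorem then forces, on each connected component of $(0,\infty)$ minus the Rayleigh anomalies, the set of $k$ where $A_\varepsilon(k)$ fails to be invertible to be discrete; combined with the discreteness of the anomalies themselves, this yields an increasing sequence $(k_m) \to \infty$. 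The hard part is precisely the non-analytic behaviour of $T^\pm$ at each Rayleigh--Wood anomaly, where $\beta_n^\pm$ vanishes and the branch of the square root switches: a careful local analysis near each anomaly is required to rule out accumulation of exceptional $k$ at these points and to glue the analytic pieces of the half-line together.
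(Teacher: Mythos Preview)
The paper does not actually prove this theorem; it quotes it from \cite[Theorems~3.2--3.4]{BonnetBD1994}, noting only that the argument rests on ``a decomposition of the bilinear form $a_\varepsilon(\cdot,\cdot)$ \cite[Lemma~3.1]{BonnetBD1994} and Fredholm analysis''. Your sketch is precisely that argument---coercive principal part built from the evanescent piece of $T^\pm$ together with the $H^1$ inner product, compact remainder from the $L^2$ mass term and the finite-rank propagating modes, then the Fredholm alternative for existence (via the vanishing of the propagating Fourier coefficients of any kernel element) and analytic Fredholm theory for the discreteness of singular $k$---so the approach matches what the paper points to.

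One correction worth noting: your claim that ``for $k>0$ sufficiently small, no mode propagates'' only makes sense if $\alpha_0$ is held fixed as a parameter independent of $k$, which is indeed how \cite{BonnetBD1994} sets up the problem (and is required anyway so that the space $H^1_{\alpha_0,0}(\Omega)$ does not change with $k$). If instead $\theta$ is fixed and $\alpha_0=k\sqrt{\varepsilon^+}\cos\theta$ moves with $k$, the $n=0$ mode on $\GH$ is propagating for every $k$. Even with $\alpha_0$ fixed, the claim fails when $\alpha_0\in\tfrac{2\pi}{L}\IZ$; the robust way to supply an invertible base point for analytic Fredholm theory is to complexify $k$. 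Your honest flag on the Rayleigh--Wood anomalies is well placed: the square-root branch points of $\beta_n^\pm$ genuinely break analyticity in $k$, and \cite{BonnetBD1994} handles the discreteness of singular frequencies by a separate argument that does not attempt to push analytic Fredholm through these points.
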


A ``singular frequency'' is a value of $k$ such that the homogeneous problem $a_\varepsilon(u,v)=0$ $\forall v \in H^1_{\alpha_0, 0}(\Omega)$ has a non-trivial solution {\cite[\S3.4]{BonnetBD1994}}. 
For a given $\alpha_0$, the singular frequencies form at most a countable sequence without accumulation points.
We report a condition on the relative permittivity $\varepsilon$ and on the Dirichlet boundary $\GD$ that guarantees the uniqueness of the solution of (\ref{nontrunc}) for all values of $k${, ruling out the presence of singular frequencies}.
\begin{thm}[{\cite[Theorem~3.5]{BonnetBD1994}}] \label{wellposed} 
Assume that $\varepsilon$ is positive real and
\begin{equation}\label{eq:NonTrap}
	\begin{cases}
		n_2 \, x_2 \leq 0 \quad\text{on} \hspace{0.2cm} \GD, \\
		\forall x_1\in[0,L], \; \tau\mapsto\varepsilon(x_1,\tau)\text{ and }
		\tau\mapsto\varepsilon(x_1,-\tau)\\
		\hspace{20mm}\text{are monotonically non-decreasing for }\tau\in[0,\infty),
	\end{cases}
\end{equation}
where $\bn = (n_1, n_2)$ is the unit normal on $\GD$ pointing from $\Omega$ into $D$.
Assume that the scattering problem is non-trivial, i.e.\ $D\ne\emptyset$ or $\varepsilon$ is not constant (or both).
Then (\ref{nontrunc}) is well-posed for every value of $k$.
\end{thm}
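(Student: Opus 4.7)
The plan is to prove well-posedness for every $k$ by establishing uniqueness of the homogeneous problem obtained by setting $u^\inc = 0$ in \eqref{weak}; the preceding Fredholm-type result \cite[Theorems 3.2--3.4]{BonnetBD1994} (quoted just above) already guarantees existence for every $k$ with the solution set forming a finite-dimensional affine space, so unique solvability will follow from injectivity. Let $u \in H^1_{\alpha_0,0}(\Omega)$ solve the homogeneous variational problem; standard elliptic regularity away from the singularities of $\partial D$ and $\varepsilon$ ensures that its trace on $\GpmH$ lies in $H^1(\GpmH)$, so Lemma \ref{RelLem} applies with vanishing right-hand side.

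First I would take the imaginary part of \eqref{rellich}. Since $\varepsilon$ is real and $u^\inc = 0$, every term except the two DtN terms is real, and the identity collapses to $\Im \int_\GH T^+ u\, \cu \rd s + \Im \int_\GmH T^- u\, \cu \rd s = 0$; by Lemma \ref{lemma:l2prod} each summand is non-negative, so both vanish, which through the explicit Fourier formula \eqref{eq:ImT} kills every strictly propagating mode: $u_n(\pm H) = 0$ whenever $\alpha_n^2 < k^2 \varepsilon^\pm$. Next I would take the real part of \eqref{rellich}. Under the non-trapping assumptions \eqref{eq:NonTrap}, every term on the left-hand side is non-negative: $\int_\Omega 2|\dxt u|^2 \geq 0$ trivially; the interface term is non-negative because monotonicity of $\varepsilon$ in $|x_2|$ yields $(\varepsilon_j - \varepsilon_{j'})\, x_2 n_2 \leq 0$ in every sign combination of $n_2$ and $x_2$; the Dirichlet term $-\int_{\GD} x_2 n_2 |\dn u|^2 \rd s$ is non-negative directly from $n_2 x_2 \leq 0$; $-\Re \int T^\pm u\, \cu \rd s \geq 0$ by \eqref{eq:ReT}; and a direct Fourier computation shows $H \int_\GpmH (|\dxo u|^2 - |\dn u|^2 - k^2 \varepsilon |u|^2) \rd s = 0$, since after the first step only evanescent and grazing modes survive on $\GpmH$ and both contribute zero through the identity $\alpha_n^2 + |\beta_n^\pm|^2 = k^2 \varepsilon^\pm$. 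A sum of non-negative terms equalling zero forces each summand to vanish, giving $\dxt u = 0$ throughout $\Omega$ and $u_n(\pm H) = 0$ for every evanescent mode.

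The final step is to upgrade these constraints to $u \equiv 0$. Outside Rayleigh--Wood anomalies, there are no grazing modes and the Fourier expansions on $\GpmH$ become entirely trivial; combined with $\dxt u = 0$ on each connected component of $\Omega$ this forces $u = 0$. \textbf{The main obstacle}, and the genuine extension over \cite[Theorem 3.5]{BonnetBD1994}, is the Rayleigh--Wood case where some $\alpha_n^2 = k^2 \varepsilon^\pm$: grazing modes produce exactly zero in every non-negative quantity in \eqref{rellich} and are therefore invisible to the Rellich argument. To handle this case I would exploit $\dxt u = 0$ to write $u(x_1,x_2) = v(x_1)$ on each connected component of $\Omega$; in each constant-$\varepsilon$ subdomain the Helmholtz equation reduces to $v'' + k^2 \varepsilon_j v = 0$, and the non-triviality assumption (either a Dirichlet constraint from a non-empty $D$ with $n_2 x_2 \leq 0$, or a genuine $x_2$-variation of $\varepsilon$ coupling different constant-$\varepsilon$ regions through the interface jump conditions) rules out a non-trivial such $v$, completing the uniqueness proof and thus well-posedness.
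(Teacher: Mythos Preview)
Your argument tracks the paper's proof closely through the Rellich machinery: reduce to uniqueness via Fredholm, take the imaginary part of \eqref{rellich} to eliminate propagating modes, then the real part under \eqref{eq:NonTrap} to force $\partial_{x_2}u=0$. One small slip: for evanescent modes the relevant identity is $\alpha_n^2-|\beta_n^\pm|^2=k^2\varepsilon^\pm$ (not $+$), though your conclusion that the $\GpmH$ integral vanishes is correct; cf.\ \eqref{Beta_nExpansion} in the paper.

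The divergence is in the concluding step. The paper observes that the vanishing of each non-negative Rellich term also yields $u=0$ on the material interfaces $\Gamma_{j,j'}$ with $x_2n_2\ne0$ (as well as on $\GD$), so that $\partial_{x_2}u=0$ propagates $u=0$ along the vertical strips meeting these sets; it then invokes the unique continuation principle \cite[Theorem~8.6]{ColtonKress2019} to extend $u=0$ from strips to all of $\Omega$. You instead argue directly: since only grazing modes survive on $\GpmH$, write $u$ as a finite trigonometric sum $v(x_1)$ and use the reduced ODE $v''+k^2\varepsilon_j v=0$ together with either the Dirichlet constraint or an $\varepsilon$-jump to force $v=0$. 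This is more elementary---it avoids unique continuation---but your sketch glosses over some bookkeeping: $\Omega$ is connected yet not $x_2$-convex when $D\ne\emptyset$, so the representation $u=v(x_1)$ must be justified separately above and below $D$ using the geometric content of $n_2x_2\le0$; and the ``interface jump'' case needs the observation that grazing modes for distinct values of $\varepsilon$ have disjoint frequency sets, so that a finite exponential sum vanishing on an interval vanishes identically. The paper's route is cleaner at the price of citing a heavier theorem; yours is self-contained but would need a few more lines to be airtight.
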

\begin{proof}
Under assumption \eqref{eq:NonTrap}, Problem~\eqref{nontrunc} is a particular case of that described in \cite[Theorem~3.5]{BonnetBD1994}, so the proof therein applies.
However, in the proof \cite[Theorem~3.5]{BonnetBD1994}, 
the treatment of the terms on $\GpmH$ when $\beta_n^\pm=0$, i.e.\ in the case of Rayleigh--Wood anomalies, is not specified, so here we give the details following the ideas in \cite[Theorem~4.5]{Strycharz1998}.

Thanks to Fredholm theory \cite[Theorem~3.2]{BonnetBD1994}, we only have to show that the homogeneous problem is well-posed: 	if $u$ is solution of Problem~\eqref{weak} with $u^\inc=0$, then $u=0$.
We denote by $u_n^\pm$ the coefficients of the quasi-periodic Fourier expansion of $u$ on $\GpmH$:
$u(x_1,\pm H)=\sum_{n\in\IZ}u_n^\pm \re^{\ri\alpha_nx_1}$.
The imaginary part of the Rellich identity \eqref{rellich} with $u^\inc=0$, together with \eqref{eq:ImT}, gives that $u_n^\pm=0$ for all $n$ with $\alpha_n^2<k^2\varepsilon^\pm$.
The definitions \eqref{beta_piu} and \eqref{beta_meno} of $\beta_n^\pm$ give
\begin{equation}\label{Beta_nExpansion}
	\alpha_n^2-|\beta_n^\pm|^2-k^2\varepsilon^\pm =-(\beta_n^\pm)^2-|\beta_n^\pm|^2 =
	\begin{cases}
		-2|\beta_n^\pm|^2 &\alpha_n^2\le k^2\varepsilon^\pm,\\
		0&\alpha_n^2> k^2\varepsilon^\pm.
	\end{cases}
\end{equation}
This identity allows to expand the $\GpmH$ term in the Rellich identity \eqref{rellich} 
in terms of the Fourier coefficients and show that it vanishes:
\begin{align*}
	\int_\GpmH\bigg(
	\left| \dxo u \right|^2 - \left| \dn u \right|^2 -k^2\varepsilon^\pm |u|^2 
	\bigg)\rd s
	&=L\sum_{n\in\IZ}(\alpha_n^2-|\beta_n^\pm|^2-k^2\varepsilon^\pm)|u_n^\pm|^2\\
	&= - 2 L \sum_{\alpha^2_n < k^2\varepsilon^\pm} |\beta_n^\pm|^2 |u_n^\pm|^2 =0.
\end{align*}
Assumption \eqref{eq:NonTrap}, together with the convention on $\bn$ on $\Gamma_{j,j'}$ stipulated at the beginning of \S\ref{s:Uniqueness} and inequality \eqref{eq:ReT}, imply that the real part of all the remaining terms in the Rellich identity are non-negative.
Since $u^\inc=0$, they sum to zero, thus each term vanishes.

In particular, $\partial_{x_2}u=0$ 
in $\Omega$.
Since $u=0$ on $\GD$ and on the interfaces $\Gamma_{j,j'}$ with $x_2n_2\ne0$, then $u=0$ on the vertical strips that intersect the obstacles $D\cup\bigcup_{(j,j')\in \Sigma}\Gamma_{j,j'}$.
(The case with $D=\emptyset$ and $\varepsilon$ discontinuous only on $\{x_2=0\}$ is treated by translating vertically the Cartesian axes.)
Since $u$ is a distributional solution of the piecewise-constant coefficient Helmholtz equation in $\Omega$, as can be seen by taking any $v\in C^\infty_0(\Omega)$ in the variational formulation \eqref{weak}, the unique continuation principle \cite[Theorem~8.6]{ColtonKress2019} ensures that $u=0$ in $\Omega$.
\end{proof}

The trivial scattering problem with $D=\emptyset$, $\Omega_0=\IR^2$, and constant $\varepsilon>0$ on $\Omega$ is not well-posed in general: if $k,\theta, L$ are such that $\beta_n=0$ for some $n\in\IZ$, then any multiple of the horizontal plane wave $\re^{\pm\ri\kappa x_1}$ solves the homogeneous quasi-periodic boundary value problem.

The requirements \eqref{eq:NonTrap} on $\varepsilon$ and $\GD$, which specialize \cite[eq.~(3.34)]{BonnetBD1994} to piecewise-constant $\varepsilon$, constitute a non-trapping condition. 
Geometrically, they mean that a point moving along a vertical half line from any $(x_1,0)$, either upwards or downwards, does not enter any impenetrable obstacle, and that at every material interface the value of $\varepsilon$ increases.
In particular, a bounded Lipschitz impenetrable obstacle in $\Omega$ is allowed if it 
can be written as $D\cap\{0<x_1<L\}=\{(x_1,x_2):\;a<x_1<b,\, g_-(x_1)<x_2<g_+(x_1)\}$ for some $0<a<b<L$ and $g_\pm\in C^{0,1}(a,b)$ with $g_-(x_1)<0<g_+(x_1)$ (see e.g.\ the squares in Figures~\ref{fig:region} and \ref{fig:dir}).
Similar conditions are present in \cite[Theorem~1]{Civiletti2020}. If conditions \eqref{eq:NonTrap} are not satisfied, one can build examples of non-unique solutions, as it is done in \cite[\S5]{BonnetBD1994}.

{\begin{corol}\label{cor:TruncWP}
	Under the assumptions of Theorem~\ref{wellposed}, the truncated problem \eqref{trunc} with $M\ge M_\star$ as in \eqref{eq:MLarge} is wellposed.
\end{corol}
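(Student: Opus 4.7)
The plan is to mirror the proof of Theorem~\ref{wellposed} with the DtN operators $T^\pm$ replaced by their truncations $T^\pm_M$. I would first observe that $T^\pm_M$ has finite rank, hence is compact as a map $H^{1/2}_{\alpha_0}(\GpmH)\to H^{-1/2}_{\alpha_0}(\GpmH)$; consequently the natural bilinear form of \eqref{trunc} is a compact perturbation of $a_\varepsilon$, satisfies the same G{\aa}rding inequality, and the Fredholm argument of \cite[Theorem~3.2]{BonnetBD1994} applies verbatim, reducing well-posedness of \eqref{trunc} to uniqueness of its homogeneous version. It thus suffices to show that any $u\in H^1_{\alpha_0,0}(\Omega)$ solving \eqref{trunc} with $u^\inc=0$ must vanish.

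As noted just after Lemma~\ref{RelLem}, such a $u$ satisfies the Rellich identity \eqref{rellich} with $T^\pm$ replaced by $T^\pm_M$. Taking its imaginary part kills the volume and interface contributions (since $\varepsilon$ and the jumps $\varepsilon_j-\varepsilon_{j'}$ are real), leaving $\Im\int_\GH T^+_M u\,\cu\rd s+\Im\int_\GmH T^-_M u\,\cu\rd s=0$. Remark~\ref{imag_rem}, whose hypothesis $M\ge M_\star$ is in force, identifies these imaginary parts with those of the full DtN operators, which by \eqref{eq:ImT} equal $L\sum_{\alpha_n^2<k^2\varepsilon^\pm}|u_n^\pm|^2\sqrt{k^2\varepsilon^\pm-\alpha_n^2}$; non-negativity of each summand forces $u_n^\pm=0$ for every propagating mode, i.e.\ every $n$ with $\alpha_n^2<k^2\varepsilon^\pm$.

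The step I expect to require the most care is the real part of the identity: the boundary term $H\int_\GpmH(|\dxo u|^2-|\dn u|^2-k^2\varepsilon|u|^2)\rd s$ does \emph{not} vanish as in the proof of Theorem~\ref{wellposed}, because on $\GpmH$ the relation $\dn u=T^\pm_M u$ truncates the Fourier series of $\dn u$ while $u$ may still carry evanescent modes with $|n|>M$. Expanding in Fourier series on $\GpmH$, using $|\beta_n^\pm|^2=\alpha_n^2-k^2\varepsilon^\pm$ whenever $\alpha_n^2\ge k^2\varepsilon^\pm$ and the vanishing of the propagating modes established above, a direct computation yields
\[
H\!\int_{\GpmH}\!\!\bigl(|\dxo u|^2-|\dn u|^2-k^2\varepsilon|u|^2\bigr)\rd s
=HL\!\!\!\sum_{\substack{|n|>M\\ \alpha_n^2>k^2\varepsilon^\pm}}\!(\alpha_n^2-k^2\varepsilon^\pm)|u_n^\pm|^2\ge 0,
\]
the non-negativity following from $M\ge M_\star$, which forces $\alpha_n^2\ge k^2\varepsilon^\pm$ for all $|n|>M$. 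Combined with \eqref{eq:ReT} applied to $T^\pm_M$ and with the non-trapping assumptions \eqref{eq:NonTrap} controlling the signs on $\GD$ and on the interfaces $\Gamma_{j,j'}$, every term in the real part of \eqref{rellich} is non-negative and must therefore vanish individually.

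In particular $\dxt u\equiv 0$ in $\Omega$, and $u$ vanishes on $\GD$ and on each interface $\Gamma_{j,j'}$ with $x_2 n_2\ne 0$; as in the final step of the proof of Theorem~\ref{wellposed}, this forces $u=0$ on every vertical strip meeting $D\cup\bigcup_{(j,j')\in\Sigma}\Gamma_{j,j'}$, and the unique continuation principle \cite[Theorem~8.6]{ColtonKress2019} then yields $u\equiv 0$ in $\Omega$, completing the proof.
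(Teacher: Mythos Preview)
Your proof is correct and follows the same overall strategy as the paper's: reduce to uniqueness via Fredholm theory, then rerun the Rellich argument of Theorem~\ref{wellposed} with $T^\pm$ replaced by $T^\pm_M$, using that the imaginary parts coincide for $M\ge M_\star$ and the real parts retain the right sign.

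Your treatment is in fact more careful than the paper's on one point. The paper's two-sentence proof asserts only that ``the only requirement on the real part of $T^\pm_M$ is its non-negativity,'' but this glosses over the boundary term $H\int_{\GpmH}(|\dxo u|^2-|\dn u|^2-k^2\varepsilon|u|^2)\rd s$, which in Theorem~\ref{wellposed} was shown to \emph{vanish} using the full relation $\dn u=T^\pm u$. You correctly observe that under the truncated boundary condition $\dn u=T^\pm_M u$ this term need not vanish---the modes $|n|>M$ contribute---and you compute explicitly that the residual equals $HL\sum_{|n|>M}(\alpha_n^2-k^2\varepsilon^\pm)|u_n^\pm|^2\ge0$ thanks to $M\ge M_\star$. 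This is the right way to close the argument, and it makes transparent a step the paper's proof leaves implicit.
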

\begin{proof}
	Under condition \eqref{eq:MLarge}, the imaginary part of $T^\pm_M$ coincides with the imaginary part of $T^\pm$.
	The only requirement on the real part of $T^\pm_M$ in the proof of Theorem~\ref{wellposed} is its non-negativity, which is preserved by the truncation.
\end{proof}
}

\subsection{Explicit stability estimates away from Rayleigh--Wood anomalies} 

Given $\varepsilon^\pm$, $\theta$, and $L$, the Rayleigh--Wood anomalies are the values of $k$ 
such that there is a $n \in \IZ$ with $\alpha^2_n = k^2\varepsilon^\pm$, \cite{Pinto2020,Bruno2014}.
Equivalently, $(\kappa^{\pm})^2 = (\kappa^+\cos\theta+\frac{2\pi}{L}n)^2$, or $\beta_n^\pm=0$, i.e.\ some Fourier modes are in the kernel of one of the DtN operators $T^\pm$.
This means that one of the two plane waves $\re^{\ri \kappa^\pm x_1}$, propagating in $\Omega^\pm$ in direction $x_1$, are quasi-periodic with parameter $\alpha_0$ over the interval $[0,L]$. 
We denote by $\delta$ a measure of the distance from the closest Rayleigh--Wood anomaly:
\begin{equation} \label{distance}
\delta_\pm := \min_{n\in\IZ} |\beta_n^\pm|, \qquad
\delta := \min\{ \delta_+, \delta_- \},
\end{equation}
and in the following analysis we assume that its value is nonzero.
Analogous assumptions are made also in the analysis of DtN operators for acoustic waveguides, see \cite[eq.~(3.9)--(3.11a)]{Melenk2023} and \cite[\S2, $\beta_j\ne0$ assumption]{Monk2024}.

The next theorem gives an explicit quantitative bound on the (wavenumber-weighted) $H^1(\Omega)$ norm of the solution under this assumption.
The key tool in its proof is the Rellich identity \eqref{rellich}.
The non-trapping condition, together with Lemma~\ref{lemma:l2prod}, determines the signs of the integrals in \eqref{rellich}.
The non-anomaly assumption $\delta>0$ and the integrals over $\GpmH$ in \eqref{rellich} allow to control all the terms in the Fourier expansion of $u$ on $\GpmH$ (see \eqref{unmeno}--\eqref{unpiu}).
From this, we bound $\|u\|_{L^2(\GH\cup\GmH)}$.
The norm in $\Omega$ is controlled using a Poincaré-type inequality in the $x_2$ direction, taking advantage of the $\|\partial_{x_2}u\|_{L^2(\Omega)}$ term in the Rellich identity.

\begin{thm} \label{thm-stima}
Under the non-trapping assumptions \eqref{eq:NonTrap}, and assuming that $\delta$ in \eqref{distance} is nonzero,  the unique solution $u$ of \eqref{nontrunc} satisfies the stability bound
\begin{align}\label{stimaH1} 
	\|\kappa u\|_{L^2(\Omega)}^2 + \|\nabla u\|_{L^2(\Omega)}^2
	\le 10L\kappa_+|\sin\theta|\bigg(\frac H\delta\|\kappa\|_{L^\infty(\Omega)}^2
	\big(2+7H\max\{\kappa^+,\kappa^-\}\big)^3+1\bigg).
\end{align}
\end{thm}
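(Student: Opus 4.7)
The plan is to use the Rellich identity \eqref{rellich} to extract Fourier-by-Fourier control of $u|_{\GpmH}$ (exploiting the non-anomaly bound $|\beta_n^\pm|\ge\delta$), derive from it estimates on $\|u\|_{L^2(\GpmH)}$ and on $\|\partial_{x_2}u\|_{L^2(\Omega)}$, and finally propagate these into the interior via a Poincaré-type inequality and the weak formulation \eqref{weak}. As a preliminary step, taking the imaginary part of \eqref{weak} tested with $v=u$ (or, equivalently, of \eqref{rellich}) gives the radiation balance
\begin{equation*}
L\sum_{\alpha_n^2<k^2\varepsilon^\pm}|\beta_n^\pm||u_n^\pm|^2 = 2\beta_0^+\Re\!\int_\GH u^\inc\,\cu\rd s \le 2\beta_0^+\sqrt L\,\|u\|_{L^2(\GH)},
\end{equation*}
which controls the propagating-mode contribution of $u|_\GpmH$.

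Next, I take the real part of \eqref{rellich}. The non-trapping hypothesis \eqref{eq:NonTrap} and Lemma~\ref{lemma:l2prod} make the terms $\int_\Omega 2|\dxt u|^2\rd\bx$, $-k^2\sum_{(j,j')\in\Sigma}(\varepsilon_j-\varepsilon_{j'})\int_{\Gamma_{j,j'}}x_2 n_2|u|^2\rd s$, $-\int_\GD x_2 n_2|\dn u|^2\rd s$, and $-\Re\int_\GpmH T^\pm u\,\cu\rd s = L\sum_{\alpha_n^2>k^2\varepsilon^\pm}|\beta_n^\pm||u_n^\pm|^2$ all non-negative. Expanding the remaining boundary contribution $H\int_\GpmH(|\dxo u|^2-|\dn u|^2-k^2\varepsilon|u|^2)\rd s$ in quasi-periodic Fourier series (using $\dn u = T^-u$ on $\GmH$ and $\dn u = T^+u-2\ri\beta_0^+ u^\inc$ on $\GH$) and applying \eqref{Beta_nExpansion} produces $-2HL\sum_{\alpha_n^2<k^2\varepsilon^\pm}|\beta_n^\pm|^2|u_n^\pm|^2$ plus cross-terms involving $u^\inc$ on $\GH$. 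The negative propagating sum is absorbed using the bound from step one together with $|\beta_n^\pm|\le\max\{\kappa^+,\kappa^-\}$. Combined with the evanescent lower bound $L\sum_{\alpha_n^2>k^2\varepsilon^\pm}|\beta_n^\pm||u_n^\pm|^2\ge L\delta\sum_{\text{evan}}|u_n^\pm|^2$ and the analogous propagating lower bound, the non-anomaly assumption upgrades everything to
\begin{equation*}
\tfrac{\delta}{L}\|u\|_{L^2(\GpmH)}^2 \le \delta\sum_{n\in\IZ}|u_n^\pm|^2 \lesssim \beta_0^+\sqrt L\,\big(1+H\max\{\kappa^+,\kappa^-\}\big)\,\|u\|_{L^2(\GH)},
\end{equation*}
so a Young's-inequality absorption produces an explicit bound on $\|u\|_{L^2(\GpmH)}^2$, and the same right-hand side controls $\|\partial_{x_2}u\|_{L^2(\Omega)}^2$, which appears with the correct sign in \eqref{rellich}.

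A vertical Poincaré argument completes the proof: for each $x_1$ such that the vertical segment $\{x_1\}\times(-H,H)$ does not meet $D$, write $u(x_1,x_2)=u(x_1,H)-\int_{x_2}^H\partial_{x_2}u(x_1,t)\rd t$; for the other $x_1$, integrate instead from $\GD$ where $u$ vanishes. Applying Cauchy--Schwarz and integrating in $x_1$ gives
\begin{equation*}
\|u\|_{L^2(\Omega)}^2 \le 4H\|u\|_{L^2(\GpmH)}^2 + 4H^2\|\partial_{x_2}u\|_{L^2(\Omega)}^2,
\end{equation*}
from which $\|\kappa u\|_{L^2(\Omega)}^2\le\|\kappa\|_{L^\infty(\Omega)}^2\|u\|_{L^2(\Omega)}^2$ is bounded explicitly. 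Finally, the real part of \eqref{weak} with $v=u$ yields $\|\nabla u\|_{L^2(\Omega)}^2=\|\kappa u\|_{L^2(\Omega)}^2+\Re\!\int_\GH T^+ u\,\cu\rd s+\Re\!\int_\GmH T^- u\,\cu\rd s+2\beta_0^+\Im\!\int_\GH u^\inc\,\cu\rd s\le\|\kappa u\|_{L^2(\Omega)}^2+2\beta_0^+\sqrt L\|u\|_{L^2(\GH)}$, since $\Re\!\int T^\pm u\,\cu\le 0$ by \eqref{eq:ReT}; adding the $\|\kappa u\|^2$ and $\|\nabla u\|^2$ bounds and tracking the constants delivers \eqref{stimaH1}. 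The main obstacle is the careful bookkeeping of the $\GH$-boundary term in \eqref{rellich}: the inhomogeneous piece $-2\ri\beta_0^+ u^\inc$ in $\dn u|_\GH$ produces indefinite-sign cross-terms, so one must separate propagating from evanescent modes, use the imaginary part of \eqref{rellich} to bootstrap the propagating piece, and choose the Young's-inequality weights sharply enough to reproduce the specific cubic polynomial $(2+7H\max\{\kappa^+,\kappa^-\})^3$ that appears in \eqref{stimaH1}.
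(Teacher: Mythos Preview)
Your overall architecture---Rellich identity~\eqref{rellich}, Fourier-mode control on $\GpmH$ via the non-anomaly bound $|\beta_n^\pm|\ge\delta$, a vertical Poincar\'e inequality, and the real part of \eqref{weak} for the gradient---is exactly the paper's strategy. However, one step is too crude and will not recover the stated bound \eqref{stimaH1}.

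The issue is your preliminary estimate $-\int_\GH 2\ri\beta_0^+ u^\inc\,\cu\rd s\le 2\beta_0^+\sqrt L\,\|u\|_{L^2(\GH)}$ via Cauchy--Schwarz. After you absorb the full norm $\|u\|_{L^2(\GH)}$ by Young's inequality, you end up with
\[
\|u\|_{L^2(\GpmH)}^2 \;\lesssim\; \frac{L}{\delta^2}\,(\beta_0^+)^2\,(1+H\max\{\kappa^+,\kappa^-\})^2,
\]
i.e.\ a $\delta^{-2}$ and $|\sin\theta|^2$ dependence. Propagated through the Poincar\'e step and the weak formulation, this yields a final bound proportional to $|\sin\theta|^2/\delta^2$, not the $|\sin\theta|/\delta$ claimed in \eqref{stimaH1}. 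Since $\beta_0^+/\delta\ge1$ but is otherwise unbounded, no choice of ``Young's-inequality weights'' will repair this; the loss is structural, not bookkeeping.

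The fix is to exploit that $u^\inc$ has a single Fourier mode on $\GH$. By Parseval, $-\int_\GH 2\ri\beta_0^+ u^\inc\,\cu\rd s = -2\ri L\beta_0^+\re^{-\ri\beta_0^+ H}\overline{u_0^+}$, so the right-hand side involves only the coefficient $u_0^+$. The paper then writes $2L\beta_0^+|u_0^+|\le 2L\beta_0^+ + \tfrac{L}{2}\beta_0^+|u_0^+|^2$ and absorbs \emph{only} the single term $\tfrac{L}{2}\beta_0^+|u_0^+|^2$ back into the propagating-mode sum on the left, producing the clean $u$-independent bound $\sum_{\alpha_n^2\le k^2\varepsilon^\pm}|\beta_n^\pm||u_n^\pm|^2\le 4\beta_0^+$. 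This, together with the analogous treatment of the real part, gives $\|u\|_{L^2(\GpmH)}^2\le \tfrac{4L}{\delta}(2+7H\max\{\kappa^+,\kappa^-\})\beta_0^+$, from which the remainder of your argument (Poincar\'e and the real part of \eqref{weak}) does produce \eqref{stimaH1}.
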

\begin{proof}
Let $u$ be the solution of \eqref{nontrunc}.
Under the non-trapping assumption \eqref{eq:NonTrap}, we know by Theorem~\ref{wellposed} that $u$ exists and it is uniquely defined.
We expand the outward-propagating components, 
i.e.\ $u^\scat$ in $\Omega_H^+$ and $u$ in $\Omega_H^-$, in Fourier series:
\begin{align*}
	&u(\bx)=
	\begin{cases}\displaystyle
		u^\inc(\bx)+u^\scat(\bx)
		=\re^{\ri(\alpha_0x_1-\beta_0^+x_2)}+
		\sum_{n \in \IZ}u_n^\scat\re^{\ri(\alpha_nx_1+\beta_n^+ (x_2-H))} & \bx\in\GH\cup \Omega_H^+,
		\\\displaystyle
		\sum_{n \in \IZ}u_n^-\re^{\ri(\alpha_nx_1-\beta_n^- (x_2+H))} & \bx\in\GmH\cup \Omega_H^-.
	\end{cases}
\end{align*}
In particular,
$u(\bx)=(u_0^\scat+\re^{-\ri\beta_0^+H})\re^{\ri\alpha_0x_1}
+\sum_{0\ne n \in \IZ}u_n^\scat\re^{\ri\alpha_nx_1}$ on $\GH$.
Their partial derivatives are:
\begin{align*}
	&\dxo u(\bx)=
	\begin{cases}\displaystyle
		\ri\alpha_0\re^{\ri(\alpha_0x_1-\beta_0^+x_2)}+
		\ri\sum_{n \in \IZ}\alpha_n u_n^\scat\re^{\ri(\alpha_nx_1+\beta_n^+ (x_2-H))} & \bx\in\GH\cup \Omega_H^+,
		\\\displaystyle
		\ri\sum_{n \in \IZ}\alpha_n u_n^-\re^{\ri(\alpha_nx_1-\beta_n^- (x_2+H))} & \bx\in\GmH\cup \Omega_H^-,
	\end{cases}
	\\
	&\dxt u(\bx)=
	\begin{cases}\displaystyle
		-\ri\beta_0^+\re^{\ri(\alpha_0x_1-\beta_0^+x_2)}+
		\ri\sum_{n \in \IZ}\beta_n^+ u_n^\scat\re^{\ri(\alpha_nx_1+\beta_n^+ (x_2-H))} & \bx\in\GH\cup \Omega_H^+,
		\\\displaystyle
		-\ri\sum_{n \in \IZ}\beta_n^- u_n^-\re^{\ri(\alpha_nx_1-\beta_n^- (x_2+H))} & \bx\in\GmH\cup \Omega_H^-.
	\end{cases}
\end{align*}
Identity \eqref{Beta_nExpansion} allows to expand the $\GpmH$ term in the Rellich identity \eqref{rellich} in terms of the Fourier coefficients $u_n^\scat$ and $u_n^-$:
\begin{align*}
	&\int_{\GH\cup \GmH} \biggl(\left| \dxo u \right|^2 - \left| \dn u \right|^2 -k^2\varepsilon |u|^2 \biggr) \rd s \\
	&= L(\alpha_0^2-k^2\varepsilon^+)|\re^{-\ri \beta_0^+H}+u_0^\scat|^2
	-L|\beta_0^+|^2|\re^{-\ri \beta_0^+H}-u_0^\scat|^2\\
	&\qquad-2L \sum_{0\neq n \in \IZ, \alpha_n^2 \leq k^2\varepsilon^+} |\beta_n^+|^2 {|u_n^\scat|^2}
	- 2L \sum_{n \in \IZ, \alpha_n^2 \leq k^2\varepsilon^-} |\beta_n^-|^2|u^-_n|^2.
\end{align*}
Using the definitions \eqref{beta_piu}, \eqref{DtN}, \eqref{beta_meno}, \eqref{DtN_meno} of $T^\pm$ and $\beta_n^\pm$, we expand the DtN operators:
\begin{align*}
	&\int_{\GH} T^+ u\cu\rd s
	=-L\sum_{\alpha^2_n > k^2\varepsilon^+} |\beta_n^+||u_n^\scat|^2
	+\ri L |\beta_0^+||u_0^\scat+\re^{-\ri \beta_0^+H}|^2
	+\ri L\!\!\sum_{0\ne n\in\IZ,\;\alpha^2_n < k^2\varepsilon^-}\!\!\! |\beta_n^+||u_n^\scat|^2 ,
	\\
	&\int_\GmH T^- u\cu\rd s
	=-L\sum_{\alpha^2_n > k^2\varepsilon^-} |\beta_n^-||u_n^-|^2
	+\ri L\sum_{\alpha^2_n \leq k^2\varepsilon^-} |\beta_n^-||u_n^-|^2 .
\end{align*}

We separate the terms appearing in the Rellich identity \eqref{rellich} associated to $\GpmH$, to the other parts of the domain, and to the right-hand side:
\begin{align*} \nonumber
	I_\GpmH := & H\int_{\GH\cup\GmH} \biggl( \left|\dxo u \right|^2 - \left| \dn u \right|^2 -k^2\varepsilon |u|^2 \biggr)\rd s -\int_{\GH} T^+ u\cu\rd s-\int_\GmH T^- u\cu\rd s
	\\ \nonumber
	=& 
	-L\Big(H|\beta_0^+|^2+\ri|\beta_0^+|\Big) \big|\re^{-\ri\beta_0^+ H}+u^\scat_0\big|^2
	- L H|\beta_0^+|^2\big|\re^{-\ri\beta_0^+ H}-u^\scat_0\big|^2\\ \nonumber
	&-L\sum_{0\ne n\in\IZ,\; \alpha_n^2\le k^2\varepsilon^+}\Big(2H|\beta_n^+|^2+\ri|\beta_n^+|\Big) |u^{\scat}_n|^2
	-L\sum_{ \alpha_n^2\le k^2\varepsilon^-}\Big(2H|\beta_n^-|^2+\ri|\beta_n^-|\Big)|u^-_n|^2\\ 
	&+L\sum_{\alpha^2_n > k^2\varepsilon^+} |\beta_n^+||u_n^{\scat}|^2
	+L\sum_{\alpha^2_n > k^2\varepsilon^-}|\beta_n^-||u_n^-|^2,
	\\
	I_{\Omega, \Sigma, \GD} :=& \int_\Omega 2|\dxt u|^2\rd\bx
	-k^2\sum_{(j,j')\in\Sigma} ( \varepsilon_{j} -\varepsilon_{j'} ) \int_{\Gamma_{j,j'}} x_2 n_2 |u|^2\rd s
	- \int_{\GD} x_2n_2|\dn u|^2 \rd s,
	\\
	I_{\mathrm{RHS}} :=& -\int_{\GH} 2 \ri \beta_0^+ {u^\inc} \cu \rd s
	=-2\ri L\beta_0^+ {(1+\overline{u_0^\scat} \re^{-\ri\beta_0^+H})}.
\end{align*}
The Rellich identity writes as 
\begin{equation}\label{eq:RellichIII}
	I_\GpmH+I_{\Omega, \Sigma, \GD}=I_{\mathrm{RHS}}, \qquad
	\text{with} \quad I_{\Omega, \Sigma, \GD}\in \IR, \quad
	I_{\Omega, \Sigma, \GD}\ge 0, \quad\Im I_\GpmH\le0, 
\end{equation}
thanks to the non-trapping assumption \eqref{eq:NonTrap}, which implies that $(\varepsilon_j-\varepsilon_{j'})x_2n_2\le0$ on $\Gamma_{j,j'}$, $(j,j')\in\Sigma$.
For simplicity we use the coefficients of the total field $u$ on $\GH$:
$$
u_n^+:=\begin{cases} u_0^\scat+\re^{-\ri\beta_0^+H}  & n=0,\\ u_n^\scat & n\ne0.\end{cases}
$$
It follows that $I_{\mathrm{RHS}}=-2\ri L\beta_0^+ \overline{u_0^+}\re^{-\ri\beta_0^+H}$.
Thanks to \eqref{eq:RellichIII} and the weighted Young inequality,
\begin{align*}
	L\sum_{\alpha_n^2\le k^2\varepsilon^+}|\beta_n^+||u^+_n|^2
	+L\sum_{\alpha_n^2\le k^2\varepsilon^-}|\beta_n^-||u^-_n|^2
	& =|\Im I_\GpmH|
	=|\Im I_{\mathrm{RHS}}|
	\le 2L |\beta_0^+||u_0^+|\\
	&\le 2L|\beta_0^+|+\frac L2|\beta_0^+||u_0^+|^2,
\end{align*}
so, bringing the last term of the inequality to the left and dividing by $\frac12$, we can control the propagative-mode coefficients:
\begin{align} \label{unmeno}
	\sum_{\alpha_n^2\le k^2\varepsilon^+}|\beta_n^+||u^+_n|^2
	+\sum_{\alpha_n^2\le k^2\varepsilon^-}|\beta_n^-||u^-_n|^2
	&\le 4|\beta_0^+|,
	\\ \label{unpiu}
	\sum_{\alpha_n^2\le k^2\varepsilon^+}|u^+_n|^2
	+\sum_{\alpha_n^2\le k^2\varepsilon^-}|u^-_n|^2
	&\le \frac{4}{\delta}|\beta_0^+|.
\end{align}
In particular, \eqref{unmeno} gives $|u_0^+|\le2$ and $|I_{\mathrm{RHS}}|\le 4L|\beta_0^+|$.
Moreover,
\begin{align}\nonumber
	\Re I_\GpmH
	=&-LH|\beta_0^+|^2 |u^+_0|^2
	-2LH\sum_{0\ne n\in\IZ,\; \alpha_n^2\le k^2\varepsilon^+}|\beta_n^+|^2|u^+_n|^2 
	-2LH\sum_{n\in\IZ,\; \alpha_n^2\le k^2\varepsilon^-}|\beta_n^-|^2|u^-_n|^2  \\
	& - L H|\beta_0^+|^2\big|2\re^{-\ri\beta_0^+ H}-u^{+}_0\big|^2
	+L\sum_{\alpha^2_n > k^2\varepsilon^+} |\beta_n^+||u_n^+|^2
	+L\sum_{\alpha^2_n > k^2\varepsilon^-} |\beta_n^-||u_n^-|^2,
	\label{eq:ReIGamma}
\end{align}
so\, from $\Re I_\GpmH\le |I_{\mathrm{RHS}}| {\le  4L|\beta_0^+|}$,
\begin{align} \nonumber
	&\sum_{\alpha^2_n > k^2\varepsilon^+} |\beta_n^+||u_n^+|^2
	+\sum_{\alpha^2_n > k^2\varepsilon^-} |\beta_n^-||u_n^-|^2
	\\
	\nonumber
	\le	&  \, {4|\beta_0^+|} 
	+20H|\beta_0^+|^2 
	+2H\sum_{0\ne n\in\IZ,\; \alpha_n^2\le k^2\varepsilon^+}|\beta_n^+|^2|u^+_n|^2 
	+2H\sum_{ \alpha_n^2\le k^2\varepsilon^-}|\beta_n^-|^2|u^-_n|^2 
	\\ \label{unresto}
	\le&
	\big(4+28H\max\{\kappa^+,\kappa^-\}\big)|\beta_0^+|,
\end{align}
where we used 
inequality~\eqref{unmeno} and
$|\beta_n^\pm|\le\kappa^\pm$ 
for $n$ with $\alpha_n^2\le k^2\varepsilon^\pm$.
From \eqref{unpiu} and \eqref{unresto}, and recalling the definition~\eqref{distance} of $\delta$, we get an estimate for the norm of the solution on $\GH \cup \GmH$
\begin{align}
	\|u\|_{L^2(\GH\cup\GmH)}^2= L
	\sum_{n\in\IZ}(|u^+_n|^2+ |u^-_n|^2)
	&\le
	{\frac{4L}\delta\big(2+7 H\max\{\kappa^+,\kappa^-\}\big)|\beta_0^+|. }
	\label{stimaorizzontale}
\end{align}
We define 
\begin{equation*}
	\Omega_+:=\Omega\cap\{x_2>0\}, \quad
	\Omega_-:=\Omega\cap\{x_2<0\}, \quad
	m_+(x_1):=\inf\{x_2\ge 0: (x_1,x_2)\in\Omega\}, \; 0<x_1<L,
\end{equation*}
and use 
a Poincaré-type inequality 
in the $x_2$ variable 
to bound the $L^2$ norm on $\Omega_\pm$
\begin{align*}
	\|u\|_{L^2(\Omega_+)}^2 & =
	\int_0^L\int_{m_+(x_1)}^H \bigg|u(x_1,H)-\int_{x_2}^H \dxt u(x_1,x')\rd x'\bigg|^2\rd x_2\rd x_1\\&
	\le 2H\|u\|_{L^2(\GH)}^2 + H^2\|\dxt u\|_{L^2(\Omega_+)}^2.
\end{align*}
The same holds for $\Omega_-$ and $\GmH$, so 
\begin{equation} \label{stimal2}
	\|u\|_{L^2(\Omega)}^2\le 2H\|u\|_{L^2(\GH\cup\GmH)}^2 + H^2\|\dxt u\|_{L^2(\Omega)}^2.
\end{equation}
Recalling \eqref{eq:RellichIII}, $|I_{\mathrm{RHS}}|\le 4L|\beta_0^+|$, and collecting the negative terms in \eqref{eq:ReIGamma},
\begin{align} \nonumber
	2\|\partial_{x_2}u\|^2_{L^2(\Omega)}
	\le&I_{\Omega, \Sigma, \GD}
	=\Re I_{\Omega, \Sigma, \GD}
	=\Re I_{\mathrm{RHS}}-\Re I_\GpmH
	\\ \nonumber
	\le &  
	4L|\beta_0^+| + 20 LH|\beta_0^+|^2 
	+2LH\!\!\sum_{0\ne n\in\IZ,\; \alpha_n^2\le k^2\varepsilon^+}|\beta_n^+|^2|u^+_n|^2 
	+2LH\sum_{\alpha_n^2\le k^2\varepsilon^-}|\beta_n^-|^2|u^-_n|^2 
	\\ \label{stimaverticale}
	\le& 4L|\beta_0^+| + 20 LH|\beta_0^+|^2 +
	2 k^2 H \Big(\varepsilon^+\|u\|_{L^2(\GH)}^2+\varepsilon^-\|u\|_{L^2(\GmH)}^2\Big)
	,\end{align}
where we used that $|\beta_n^\pm|^2\le k^2\varepsilon^\pm$ for these $n$.
From \eqref{stimal2}, \eqref{stimaorizzontale}, \eqref{stimaverticale}, and again $\beta_0^+\le\kappa^+$, we control $u$ over the whole domain:
\begin{align*} \nonumber
	&\|u\|_{L^2(\Omega)}^2\\ \nonumber
	&\le   2H\|u\|_{L^2(\GH\cup\GmH)}^2 
	+2LH^2|\beta_0^+| + 10 LH^3|\beta_0^+|^2 +
	k^2 H^3 \Big(\varepsilon^+\|u\|_{L^2(\GH)}^2+\varepsilon^-\|u\|_{L^2(\GmH)}^2\Big)
	\\ \nonumber
	&\le \frac{4LH}\delta\big(2+H^2\max\{\kappa^+,\kappa^-\}^2\big)\big(2+7H\max\{\kappa^+,\kappa^-\}\big)|\beta_0^+|
	+2LH^2(1+5H\kappa^+)|\beta_0^+|
	\\
	&\le \frac{5LH}\delta\big(2+7H\max\{\kappa^+,\kappa^-\}\big)^3|\beta_0^+|.
\end{align*}
In the last inequality we have have bounded the last term with a quarter of the first one, using $2H\le\frac1\delta+H^2\delta\le\frac1\delta(1+H^2(\kappa^+)^2)$, which follows from $\delta\le\kappa^+$.
To conclude, we bound the gradient of $u$ using the weak formulation \eqref{weak}, the sign \eqref{eq:ReT} of $\Re T^\pm$, $|I_{\mathrm{RHS}}|\le 4L\beta_0^+$, and $\beta_0^+=-\kappa^+\sin\theta>0$:
\begin{align*} 
	\|u\|_{\kappa, H^1(\Omega)}^2 = 
	&\|\kappa u\|_{L^2(\Omega)}^2 + \|\nabla u\|_{L^2(\Omega)}^2
	\\ 
	=&\|\kappa u\|_{L^2(\Omega)}^2 +
	\int_\Omega \kappa^2 |u|^2\rd\bx+
	\Re\Big\{\int_\GH T^+u\cu\rd s+\int_\GmH T^-u\cu\rd s\Big\}+\Re I_{\mathrm{RHS}}
	\\ 
	\le& 2 \|\kappa u\|_{L^2(\Omega)}^2 +4L|\beta_0^+|
	\\ 
	\le& \frac{10LH}\delta\|\kappa\|_{L^\infty(\Omega)}^2 \big(2+7H\max\{\kappa^+,\kappa^-\}\big)^3|\beta_0^+|
	+4L|\beta_0^+|
	\\ 
	\le& 10L\kappa_+|\sin\theta|\bigg(\frac H\delta\|\kappa\|_{L^\infty(\Omega)}^2
	\big(2+7H\max\{\kappa^+,\kappa^-\}\big)^3+1\bigg).
\end{align*}
\end{proof}

\begin{rem}[Stability estimates related to \eqref{stimaH1} available in the literature]
\label{rem:StabilityCompare}
Estimate \eqref{stimaH1} resembles the one in \cite[Theorem.~3.1]{Zhu2024}, since they {are both proportional to the sine of the incidence} 
angle $\theta$. 
In \cite{Zhu2024}, the diffraction grating profile is described by a periodic Lipschitz function and a Dirichlet condition is imposed, while the wavenumber is constant, so they can use a Poincaré-type inequality from the grating profile to the artificial DtN boundary without having to deal with Rayleigh--Wood anomalies. 

A different way to derive explicit stability estimates from the Rellich identity \eqref{rellich} is to use a Poincaré inequality in the vertical direction, similarly to \eqref{stimal2}, starting from a discontinuity in the wavenumber $\kappa$ instead of starting from $\GpmH$.
This has been done both in the case of two regions, as in \cite[\S4, in particular pp.~380--381]{Lechleiter2010} and \cite[\S5]{Zhu2024}, and in the case of domains composed of multiple materials, as in \cite[Lemma~2]{Civiletti2020}, under non-trapping assumptions similar to \eqref{eq:NonTrap}. 
The advantage of such estimates is that they are insensitive to Rayleigh--Wood anomalies.
On the other hand, \emph{(i)} they require a material interface crossing the whole domain $\Omega$ horizontally (thus they exclude configurations such as that in Figure~\ref{fig:dir} 
below), and \emph{(ii)} the bounds obtained blow up for vanishingly small material jumps.
\end{rem}

\section{The DtN-TDG method}
\label{s:TDG}
The finite element method for the Helmholtz equation and the diffraction grating problem with DtN boundary conditions has been studied in various works, such as \cite{Bao2000,BaoLi2022,Bao1995}{, while finite volume methods have been used in} \cite{Wang2022}.
Here we present the DtN-TDG method, adapting the TDG of \cite{Hiptmair2011,Hiptmair2016,Kapita2018,Howarth2014} to the model problem introduced in \S\ref{secmod}.

From now on, we write $T_M^\pm$ for $M\in \IN\cup\{\infty\}$, with $T_\infty^\pm=T^\pm$, and denote by $u^\infty=u$ the solution of \eqref{nontrunc}.

\subsection{Formulation of the numerical method} \label{sec_tdg}

{We incorporate the (truncated) DtN operators $T^\pm_M$ in the TDG formulation following the strategy of }
\cite{Kapita2018}. 
The main difference is that here we consider a periodic scatterer with a DtN boundary condition on {two} segments, while in \cite{Kapita2018} the scatterer is bounded and the DtN {condition} is posed on a {surrounding} circle. 
Moreover, in our formulation we solve the problem for the total field and in \cite{Kapita2018} the problem is solved for the scattered one.

The definitions of the numerical fluxes on the interior faces and on the Dirichlet faces on $\GD$ are taken as those of standard TDG methods in \cite[\S2.2.1]{Hiptmair2016}, \cite{Hiptmair2011}, and in particular \cite[Ch.~3]{Howarth2014} as we allow a discontinuous wavenumber $\kappa$.
On the artificial boundaries $\GpmH$, we propose new numerical fluxes, adapting the idea in \cite{Kapita2018}.

Let $\Mh = \{ K \}$ be a convex-polygonal finite element partition of $\Omega$, 
such that the permittivity $\varepsilon$ is constant in each element.
We write $\bn_K$ for the outward-pointing unit normal vector on $\partial K$, and 
$h = \max_{K\in\Mh}h_K$ for the mesh width of $\Mh$,  
where $h_K$ is the diameter of $K$.
We denote by $\nabla_h$ the elementwise application of $\nabla$ and write $\dn = \bn \cdot \nabla_h$ and $\dnK = \bn_K \cdot \nabla_h$ for the normal derivatives on $\partial \Omega$ and $\partial K$ respectively.

We require the mesh to be quasi-periodic: for each element $K\in\Mh$ with a face $F\subset\Gleft$, denoting its endpoints $(0,x_2^-)$ and $(0,x_2^+)$, there is a $K'\in\Mh$ with a face $F'\subset\Gright$ with endpoints $(L,x_2^-)$ and $(L,x_2^+)$. 
Then $F$ and $F'$ are {identified and} treated as a single internal face. 
In particular, the union of the mesh and its copy translated by $\pm L$ in the direction $x_1$ is a conforming mesh; see Figure~\ref{fig:periodicity}.

\begin{figure}[htb]
\centering
\includegraphics[width=.75\textwidth]{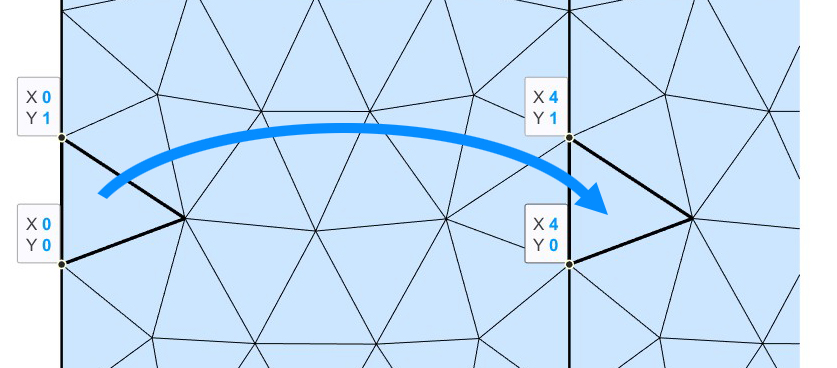}
\caption{Periodicity of the mesh. }
\label{fig:periodicity}
\end{figure}

We denote by $\Fh = \bigcup_{K \in \Mh } \partial K $ and $\FhI = \Fh \setminus(\GD\cup\GH\cup\GmH)$ the skeleton of the mesh and its inner part.
Note that the faces in $\Gleft\cup\Gright$ are part of $\FhI$.

Given two adjacent elements $K_1, K_2\in \Mh$, an elementwise-smooth function $v$ and vector field $\pmb\tau$ on $\Mh$, we introduce on $\partial K_1 \cap \partial K_2$ the averages and the normal jumps:
\begin{align} \label{jumps}\begin{aligned}
	\dlgraffa v\drgraffa & := \frac{1}{2}\left( v_{|K_1} + v_{|K_2} \right), & 
	\dlgraffa\pmb\tau\drgraffa & := \frac{1}{2}\left( \pmb\tau_{|K_1} + \pmb\tau_{|K_2} \right), \\ 
	\llbracket v \rrbracket_N & := v_{|K_1}\bn_{K_1} + v_{|K_2}\bn_{K_2},\qquad & 
	\llbracket \pmb\tau \rrbracket_N & := 
	\pmb\tau_{|K_1}\cdot \bn_{K_1} +  \pmb\tau_{|K_2}\cdot \bn_{K_2}. 
	\end{aligned}\end{align}
	These face quantities must take into account quasi-periodicity:
	if $K_1$ has two vertices with coordinates $(0,x_2^-)$ and $(0,x_2^+)$, and
	$K_2$ has two vertices with coordinates $(L,x_2^-)$ and $(L,x_2^+)$, when writing the average and jump formulae \eqref{jumps} on their common face $v_{|K_1}$ and $\pmb\tau_{|K_1}$ have to be replaced by $v_{|K_1}\re^{\ri\alpha_0L}$ and $\pmb\tau_{|K_1}\re^{\ri\alpha_0L}$.
	
	We then define the two main spaces where Trefftz functions live, the broken Sobolev space $H^s(\Mh)$ and the Trefftz space $T(\Mh)$:
	\begin{align} \label{broken_sob}
H^s(\Mh) :=&\ \{ v \in L^2(\Omega) : v_{| K}\in H^s(K) \hspace{0.2cm} \forall K \in \Mh \}, \hspace{0.3cm} \text{for} \hspace{0.2cm} s>0, \\ 
T(\Mh) :=&\ \{ v \in H^1(\Mh) : \Delta v + \kappa^2v = 0 \; \text{in} \; K \; \text{and} \; \dnK v \in L^2(\partial K) \hspace{0.2cm}\forall K \in \Mh \}. \nonumber
\end{align}
The discrete Trefftz space $V_p(\Mh)$ is a finite-dimensional subspace of $T(\Mh)$ and can be represented as $V_p(\Mh) = \bigoplus_{K \in \Mh}V_{p_K}(K)$, where $V_{p_K}(K)$ is a $p_K$-dimensional subspace of $T(\Mh)$ of functions supported in $K$.

We derive the TDG formulation following \cite{Hiptmair2011}.
We multiply the Helmholtz equation by a test function $v$ and integrate by parts twice on each $K\in\Mh$. We then replace $u$ and $v$ by discrete functions $u_h, v_h \in V_p(\Mh)$; on $\partial K$ we replace the trace of $u_h$ and $\nabla u_h$ by the numerical fluxes $\hat{u}_h$ and $-\widehat{\ri\kappa\sigma}_h$, that are single-valued approximations of $u_h$ and $\nabla u_h$, respectively, on each face, obtaining 
\begin{equation} \label{TDG}
\int_{\partial K} \hat{u}_h \, \overline{\partial_\bn v_h}  \rd s 
+ \int_{\partial K} \widehat{\ri\kappa\sigma}_h \cdot \bn \, \overline{v_h}  \rd s = 0,
\end{equation}
where no volume terms appears because the test field $v_h\in V_p(\Mh)$ is Helmholtz solution in $K$.

We define the DtN-TDG numerical fluxes
$\hat u_h$ and $\widehat{\ri\kappa\sigma}_h$. 
Note that the penalty terms have opposite signs to those in e.g.\ \cite{Hiptmair2011}, because of the opposite time-harmonic sign convention, as mentioned in \S\ref{secmod}.
On the interior faces in $\FhI$ we choose
\begin{equation*}
\begin{cases}
	\hat{u}_h = \dlgraffa u_h\drgraffa  -\ri \xi^{-1}\pb
	\llbracket \nabla_h u_h  \rrbracket_N, & \\
	\widehat{\ri\kappa\sigma}_h = - \dlgraffa \nabla_h u_h\drgraffa  - \ri \xi \pa  \llbracket u_h  \rrbracket_N, & 
\end{cases}
\end{equation*}
where  $\pa\in L^\infty(\FhI\cup \Gamma_D)$, $\pb\in L^\infty(\FhI)$ are positive flux coefficients, and $\xi$ is defined,  following \cite[\S3.3]{Howarth2014}, on an internal face $F=\partial K_1\cap\partial K_2$ with $K_1,K_2\in\Mh$ as
\begin{equation} \label{xi_2}
\xi = {\frac{\Re (\kappa_{|K_1}) + \Re (\kappa_{|K_2})}2.}
\end{equation}

On the boundary faces, we define the numerical fluxes as
\begin{align*}
& \begin{cases}
	\hat{u}_h = 0, & \\
	\widehat{\ri\kappa \sigma}_h = - \nabla_h u_h - \ri \kappa \pa u_h \bn &  
\end{cases} & \text{on }\GD, \\
& \begin{cases}
	\hat{u}_h = u_h + -\ri\kappa^{-1}\pd\left(\nabla_h  u_h \cdot \bn - T_M^+ u_h + 2\ri\beta_0^+ u^\inc \right), & \\
	\widehat{\ri \kappa \sigma}_h = -  T_M^+ u_h \bn + 2\ri\beta_0^+ u^\inc \bn 
	+\ri\kappa^{-1}\pd T^{+,*}_M \left( \nabla_h  u_h -T_M^+ u_h \bn + 2\ri\beta_0^+ u^\inc \bn \right) &
\end{cases} & \text{on }\GH, \\
& \begin{cases}
	\hat{u}_h = u_h -\ri\kappa^{-1}\pd\left( \nabla_h  u_h \cdot \bn - T_M^- u_h \right), & \\
	\widehat{\ri \kappa \sigma}_h = - T_M^- u_h \bn +\ri\kappa^{-1}\pd T^{-,*}_M \left( \nabla_h  u_h -T_M^- u_h \bn \right) &
\end{cases} &\text{on }\GmH,
\end{align*}
where $\pd \in L^\infty(\GH\cup\GmH)$ is a positive flux coefficient, and $T^{\pm,*}_M$ is the $L^2(\GH)$-adjoint of $T^\pm_M$:
\begin{equation*}
\int_\GpmH T^{\pm,*}_M v \, \cw \rd s
= \int_\GpmH v \, \overline{T^\pm_M w} \rd s
\qquad \forall v,w\in H^{1/2}_{\alpha_0}(\GpmH), \qquad M \in \IN \cup \{\infty\}.
\end{equation*}

Substituting the numerical fluxes in (\ref{TDG}), summing over all the mesh elements, and separating the terms depending on $u^\inc$, we get the following TDG scheme: Find $u_h^M \in V_p(\Mh)$ such that
\begin{equation}\label{eq:TruncDG}
\cA_h^M(u_h^M, v_h) =\ell_h^M(v_h) 
\quad \forall v_h \in V_p(\Mh), \qquad M\in \IN\cup\{\infty\},
\end{equation}
where
\begin{align}\nonumber
\cA_h^{ M}(u, v) :=
& \int_{\FhI} \left( \dlgraffa u\drgraffa \llbracket \overline{\nabla_h v} \rrbracket_N  
-  \dlgraffa\nabla_h u\drgraffa \cdot \llbracket \cv \rrbracket_N 
- \ri\xi \,  {\pa} \, \llbracket  u\rrbracket_N \cdot \llbracket \cv \rrbracket_N    
- \ri\xi^{-1} {\pb} \, \llbracket \nabla_h u\rrbracket_N  \, \llbracket \overline{\nabla_h v} \rrbracket_N \right) \rd s \\ 
\label{DtN_bilinear_exact}
& + \int_\GH \biggl( u \, \overline{\dn v }  - T_M^+u \, \overline{v}
-{\pd} \, \ri\kappa^{-1} \left(\dn u -T_M^+ u\right) \, \overline{\left(\dn v -T_M^+ v\right)} \biggl) \rd s \\ \nonumber
& + \int_\GmH \biggl(  u \, \overline{\dn v}  - T_M^-u \, \overline{v}
-{\pd} \, \ri\kappa^{-1} \left(\dn u -T_M^- u\right) \, \overline{\left(\dn v -T_M^- v\right)} \biggl) \rd s \\ \nonumber
& + \int_\GD \left( -\dn u \, \cv - \ri\kappa \,{\pa} \, u\, \cv \right) \rd s,
\end{align}
and 
\begin{equation} \label{operator}
\ell_h^M(v) :=  \int_\GH - 2\ri\beta_0^+ \, u^\inc \left( \cv - {\pd} \, \ri\kappa^{-1} \, \overline{ (\partial_\bn v -T_M^+ v )} \, \right) \rd s.
\end{equation}  
In practice, the sesquilinear form $\cA_h^M$ and the linear functional $\ell_h^M$ can be computed numerically only for $M<\infty$.

\subsection{DtN-TDG error analysis}

We develop our analysis under the non-trapping conditions \eqref{eq:NonTrap}. Under this assumptions, we can guarantee the existence and the uniqueness of the solution to the scattering problems \eqref{nontrunc} and \eqref{trunc}.
The consistency of the numerical fluxes gives the consistency of the method, \cite{Arnold2002}.
\begin{prop} \label{prop:consistent}
The DtN-TDG method is consistent, i.e.\ $\cA_h^M(u^M,v_h)=\ell_h^M(v_h)$ for $u^M$ solution of \eqref{trunc} and any $v_h\in V_p(\Mh)$.
\end{prop}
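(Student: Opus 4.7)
The plan is the standard DG consistency argument: verify face by face that, when the trial function is the exact solution $u^M$ of \eqref{trunc}, the DtN-TDG numerical fluxes reduce to the true traces of $u^M$, with the $u^\inc$ data cleanly split off. Once this is done, the identity $\cA_h^M(u^M,v_h)=\ell_h^M(v_h)$ follows from the very construction of $\cA_h^M$ and $\ell_h^M$: the bilinear form was built by substituting the numerical fluxes into the elementwise identity \eqref{TDG}, summing over $K\in\Mh$, and moving the $u^\inc$-dependent terms to the right-hand side; and \eqref{TDG} with the true traces of $u^M$ in place of the fluxes is just the twice-integrated-by-parts form of the Helmholtz equation $\Delta u^M+\kappa^2 u^M=0$ tested against $v_h$, which holds since the mesh respects the discontinuities of $\varepsilon$ (so $u^M$ satisfies the Helmholtz equation in each $K$) and $v_h\in V_p(\Mh)\subset T(\Mh)$ is Trefftz.

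Concretely I would check each kind of face. On $\FhI$: since $u^M\in H^1_{\alpha_0}(\Omega)$ we have $\llbracket u^M\rrbracket_N=0$, and since $u^M$ is a distributional Helmholtz solution in $\Omega$ with piecewise-constant coefficient $\kappa^2$, the normal flux $\bn\cdot\nabla u^M$ is continuous across every internal face, giving $\llbracket\nabla u^M\rrbracket_N=0$; the $\pa$- and $\pb$-weighted penalties then vanish and the fluxes reduce to $\hat u_h=u^M$, $\widehat{\ri\kappa\sigma}_h\cdot\bn=-\dn u^M$. On $\GD$, $u^M=0$ kills the $\pa$-penalty and leaves $\widehat{\ri\kappa\sigma}_h\cdot\bn=-\dn u^M$. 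On $\GH$, the truncated DtN condition, rewritten using $\dn u^\inc=-\ri\beta_0^+u^\inc$ and the single-mode evaluation $T^+_M u^\inc=\ri\beta_0^+u^\inc$, reads $\dn u^M-T^+_M u^M=-2\ri\beta_0^+u^\inc$, so the combination $\dn u_h-T^+_M u_h+2\ri\beta_0^+u^\inc$ that drives the $\pd$-penalty vanishes at $u_h=u^M$; this gives $\hat u_h=u^M$ and $\widehat{\ri\kappa\sigma}_h\cdot\bn=-T^+_M u^M+2\ri\beta_0^+u^\inc=-\dn u^M$. The analogous check on $\GmH$, where the condition reads $\dn u^M-T^-_M u^M=0$, again yields $\hat u_h=u^M$ and $\widehat{\ri\kappa\sigma}_h\cdot\bn=-\dn u^M$.

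With these identifications in hand, plugging $u_h=u^M$ into \eqref{TDG}, summing over $K\in\Mh$, and using the jump/average definitions \eqref{jumps} on the skeleton reproduces exactly the expression \eqref{DtN_bilinear_exact} of $\cA_h^M(u^M,v_h)$ minus the data \eqref{operator}. The main bookkeeping care concerns the periodic pair $\Gleft,\Gright$: the twisted convention for $\llbracket\cdot\rrbracket_N$ and $\dlgraffa\cdot\drgraffa$ described after \eqref{jumps}, with the $\re^{\ri\alpha_0 L}$ factor, is precisely what makes $\llbracket u^M\rrbracket_N$ and $\llbracket\nabla u^M\rrbracket_N$ vanish on these identified faces for a quasi-periodic $u^M\in H^1_{\alpha_0}(\Omega)$, so this step deserves to be spelled out explicitly. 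No additional analytical input is needed beyond \eqref{trunc} and the definitions of the fluxes; the argument parallels \cite[Prop.~3.1]{Hiptmair2011} and the bounded-domain DtN-TDG consistency in \cite{Kapita2018}.
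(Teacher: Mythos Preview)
Your proposal is correct and follows exactly the approach the paper indicates: the paper simply states that ``the consistency of the numerical fluxes gives the consistency of the method'' with a reference to \cite{Arnold2002}, and your argument is precisely the face-by-face verification of flux consistency that underlies this claim. The checks you outline on $\FhI$, $\GD$, $\GpmH$ and the quasi-periodic identification of $\Gleft$ with $\Gright$ are all accurate.
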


We rewrite the sesquilinear form \eqref{DtN_bilinear_exact} by integrating by parts elementwise:
\begin{align} \label{eqn:bilinear_exact_new}
\cA^{ M}_h(u, v) =&\ \int_\Omega \left( \nabla_h u \cdot \nabla_h \cv -\kappa^2 u \cv \right) \rd\bx  \\ \nonumber
&+ \int_{\FhI} \bigg( - \dlgraffa \nabla_h u \drgraffa \cdot \overline{\llbracket v \rrbracket}_N - \llbracket u \rrbracket_N \cdot \overline{\dlgraffa \nabla_h v \drgraffa} \\ 
\nonumber 
&\hspace{20mm}
-\ri\xi \,\pa \, \llbracket u \rrbracket_N \cdot \overline{\llbracket v \rrbracket}_N 
-\ri\xi^{-1}\pb \, \llbracket \nabla_h u \rrbracket_N \, \overline{\llbracket \nabla_h v \rrbracket}_N  \bigg) \rd s \\
\nonumber
& + \int_\GH \biggl( - T_M^+u \, \overline{v} -\pd \, \ri\kappa^{-1} \, \left(\partial_\bn u -T^+_{ M} u\right) \, \overline{\left(\partial_\bn v -T_{ M}^+ v\right)} \biggl) \rd s \\
\nonumber
& + \int_\GmH \biggl( - T_M^-u \, \overline{v} -\pd \, \ri\kappa^{-1} \, \left(\partial_\bn u -T_{ M}^- u\right) \, \overline{\left(\partial_\bn v -T^-_{ M} v\right)} \biggl) \rd s \\
& + \int_\GD \left(- u \, \overline{\partial_\bn v}  -\partial_\bn u \, \cv -\ri\kappa \,\pa \, u\, \cv \right) \rd s
\qquad\qquad{\forall u,v\in T(\Mh)}
. \nonumber
\end{align}
Integrating by parts elementwise again as in \cite[eq.~(39)]{Kapita2018}, and using the Trefftz property of $u$, we obtain
\begin{align} \label{eqn:bilinear_magic}
\cA^{ M}_h(u, v) =&\ \int_{\FhI} \bigg( \llbracket \nabla_h u \rrbracket_N \overline{\dlgraffa v \drgraffa} - \llbracket u \rrbracket_N \cdot \overline{\dlgraffa \nabla_h v \drgraffa} \\ 
\nonumber 
&\hspace{20mm}
-\ri\xi \,\pa \, \llbracket u \rrbracket_N \cdot \overline{\llbracket v \rrbracket}_N 
-\ri\xi^{-1}\pb \, \llbracket \nabla_h u \rrbracket_N \, \overline{\llbracket \nabla_h v \rrbracket}_N  \bigg) \rd s \\
\nonumber
& + \int_\GH \biggl( \left(\partial_\bn u -T^+_{ M} u\right) \overline{v} -\pd \, \ri\kappa^{-1} \, \left(\partial_\bn u -T^+_{ M} u\right) \, \overline{\left(\partial_\bn v -T_{\ M}^+ v\right)} \biggl) \rd s \\
\nonumber
& + \int_\GmH \biggl( \left(\partial_\bn u -T_{ M}^- u\right) \overline{v} -\pd \, \ri\kappa^{-1} \, \left(\partial_\bn u -T_{ M}^- u\right) \, \overline{\left(\partial_\bn v -T^-_{ M} v\right)} \biggl) \rd s \\
& + \int_\GD \left(- u \, \overline{\partial_\bn v} -\ri\kappa \,\pa \, u\, \cv \right) \rd s
\qquad\qquad{\forall u,v\in T(\Mh)}
. \nonumber
\end{align}
We define two mesh-dependent seminorms on the Trefftz space $T(\Mh)$, for any $M \in \IN$:
\begin{align} \nonumber
\vertiii{w}_{\mathrm{TDG}_M}^2 := & \left\| \xi^{\frac{1}{2}} \pa^{\frac{1}{2}} \llbracket w \rrbracket_N \right\|^2_{L^2(\FhI)} + \left\| \xi^{-\frac{1}{2}} \pb^{\frac{1}{2}} \llbracket \nabla_h w \rrbracket_N \right\|^2_{L^2(\FhI)}  \\ \nonumber
& + \left\| \kappa^{-\frac{1}{2}} \pd^{\frac{1}{2}} ( \partial_\bn w - T_M^+ w ) \right\|^2_{L^2(\GH)} + \left\| \kappa^{-\frac{1}{2}} \pd^{\frac{1}{2}} ( \partial_\bn w - T_M^- w ) \right\|^2_{L^2(\GmH)} \\ \label{normaTDG}
&+ \left\| \kappa^{\frac{1}{2}} \pa^{\frac{1}{2}} w \right\|^2_{L^2(\GD)},
\\ \nonumber
\vertiii{w}_{\mathrm{TDG}_M^+}^2 := & \vertiii{w}_{\mathrm{TDG}_M}^2  +\left\| \xi^{-\frac{1}{2}} \pa^{-\frac{1}{2}} \dlgraffa \nabla_h w \drgraffa \right\|^2_{L^2(\FhI)} + \left\| \xi^{\frac{1}{2}} \pb^{-\frac{1}{2}} \dlgraffa w \drgraffa \right\|^2_{L^2(\FhI)} \\ \label{normaTDG+}
& +  \left\| \kappa^{\frac{1}{2}} \pd^{-\frac{1}{2}} w \right\|^2_{L^2(\GH)} +  \left\| \kappa^{\frac{1}{2}} \pd^{-\frac{1}{2}} w \right\|^2_{L^2(\GmH)} + \left\| \kappa^{-\frac{1}{2}} \pa^{-\frac{1}{2}} \partial_\bn w \, \right\|^2_{L^2(\GD)}.
\end{align}

\begin{prop} \label{prop:norma}
Assume the non-trapping assumption \eqref{eq:NonTrap}, that $\varepsilon$ is real, and $M\ge M_\star$ as in \eqref{eq:MLarge}, including the case $M=\infty$.
Then the seminorm $\vertiii{\cdot}_{\mathrm{TDG}_M}$ is actually a norm on the Trefftz space $T(\Mh)$, and we have the coercivity inequality
\begin{equation}\label{eq:Coercivity}
	-\Im \cA^M_h(w, w) \geq \vertiii{w}_{\mathrm{TDG}_M}^2
	\qquad {\forall w\in T(\Mh)}.
\end{equation}
Moreover,
\begin{align}\label{eq:ContinuityA}
	\begin{aligned}
		\cA^M_h(v, w) \leq 2 \vertiii{v}_{\mathrm{TDG}_M} \, \vertiii{w}_{\mathrm{TDG}_M^+}
		\\ 
		\ell_h^M(v)\le 2|\sin\theta|\sqrt{\|\pd\|_{L^\infty(\GH)}\kappa^+ L}\, \vertiii{v}_{\mathrm{TDG}_M^+}
	\end{aligned}\qquad{\forall v,w\in T(\Mh)}.
\end{align}
We also have that the discrete problem \eqref{eq:TruncDG} has a unique solution $u_h^M \in V_p(\Mh)$. 
Let $u^M$ be the unique solution of the truncated boundary value problem \eqref{trunc}, and $u_h^M \in V_p(\Mh)$ the unique solution of the discrete DtN-TDG problem \eqref{eq:TruncDG}. 
Then:
\begin{equation} \label{quasiopt}
	\vertiii{u^M - u_h^M}_{\mathrm{TDG}_M} \leq 2 \inf_{v_h \in V_p(\Mh)} \vertiii{u^M - v_h}_{\mathrm{TDG}_M^+}.
\end{equation}
\end{prop}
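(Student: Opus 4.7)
The plan is to prove the five assertions in order, leaning on the two equivalent reformulations \eqref{eqn:bilinear_exact_new} and \eqref{eqn:bilinear_magic} of $\cA_h^M$ and the sign information supplied by Lemma~\ref{lemma:l2prod}. For the coercivity \eqref{eq:Coercivity}, I would evaluate \eqref{eqn:bilinear_exact_new} at $u=v=w$ and take $-\Im$. Since $\kappa$ is real, the volume integral $\int_\Omega(|\nabla_h w|^2-\kappa^2|w|^2)\rd\bx$ is real, and the consistency combinations $-\dlgraffa\nabla_h w\drgraffa\cdot\overline{\llbracket w\rrbracket}_N - \llbracket w\rrbracket_N\cdot\overline{\dlgraffa\nabla_h w\drgraffa}$ on $\FhI$ and $-w\overline{\dn w}-\dn w\,\cw$ on $\GD$ are each $-2\Re(\cdot)$ of a complex quantity, hence also real. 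What survives in $-\Im\cA_h^M(w,w)$ is exactly the sum of the five squared seminorms appearing in \eqref{normaTDG}, coming from the purely imaginary penalty terms, plus $\Im\int_{\GpmH}T_M^\pm w\,\cw\rd s$, which is non-negative by Lemma~\ref{lemma:l2prod}; this proves \eqref{eq:Coercivity}.

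To upgrade $\vertiii{\cdot}_{\mathrm{TDG}_M}$ from a seminorm to a norm on $T(\Mh)$, observe that $\vertiii{w}_{\mathrm{TDG}_M}=0$ kills all inter-element jumps (so that $w\in H^1_{\alpha_0,0}(\Omega)$) and enforces $\dn w=T_M^\pm w$ on $\GpmH$; combined with the elementwise Trefftz property $\Delta w+\kappa^2 w=0$, this makes $w$ a solution of the homogeneous truncated problem \eqref{trunc} (or of \eqref{nontrunc} when $M=\infty$) with $u^\inc=0$. Under \eqref{eq:NonTrap} and $M\ge M_\star$ from \eqref{eq:MLarge}, Corollary~\ref{cor:TruncWP} (respectively Theorem~\ref{wellposed}) then forces $w=0$.

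For the continuity of $\cA_h^M$, I would use the ``magic'' form \eqref{eqn:bilinear_magic}, which---thanks to the second elementwise integration by parts that exploits the Trefftz property---contains no $H^1(\Omega)$ volume contribution, so Cauchy--Schwarz can be applied term by term after splitting each weight as a product in which one factor is a summand of $\vertiii{v}_{\mathrm{TDG}_M}^2$ and the other a summand of $\vertiii{w}_{\mathrm{TDG}_M^+}^2$. For instance, $\int_{\FhI}\llbracket\nabla_h v\rrbracket_N\overline{\dlgraffa w\drgraffa}\rd s$ is estimated by $\|\xi^{-1/2}\pb^{1/2}\llbracket\nabla_h v\rrbracket_N\|_{L^2(\FhI)}\cdot\|\xi^{1/2}\pb^{-1/2}\dlgraffa w\drgraffa\|_{L^2(\FhI)}$. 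Summing over all terms and tracking how many times each summand is used---each $v$-summand appears at most twice and each $w$-summand exactly once---produces \eqref{eq:ContinuityA} with constant~$2$. For $\ell_h^M$, I would factorise $\cv-\pd\,\ri\kappa^{-1}\overline{(\dn v-T_M^+ v)}$ in \eqref{operator}, apply Cauchy--Schwarz with weights $\pd^{1/2}\kappa^{-1/2}$ and $\pd^{-1/2}\kappa^{1/2}$, and use $|u^\inc|\equiv1$ and $\kappa\equiv\kappa^+$ on $\GH$ to bound the source factor by $\sqrt{L\|\pd\|_{L^\infty(\GH)}/\kappa^+}$; substituting $|\beta_0^+|=\kappa^+|\sin\theta|$ then produces the stated constant.

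Discrete well-posedness is immediate in finite dimension, since $V_p(\Mh)\subset T(\Mh)$: if $\cA_h^M(u_h,v_h)=0$ for every $v_h\in V_p(\Mh)$, taking $v_h=u_h$ and using \eqref{eq:Coercivity} gives $\vertiii{u_h}_{\mathrm{TDG}_M}=0$, hence $u_h=0$. For the quasi-optimality \eqref{quasiopt}, consistency (Proposition~\ref{prop:consistent}) gives the Galerkin orthogonality $\cA_h^M(u^M-u_h^M, v_h-u_h^M)=0$ for every $v_h\in V_p(\Mh)$, so
\begin{align*}
\vertiii{u^M-u_h^M}_{\mathrm{TDG}_M}^2
&\le -\Im\cA_h^M(u^M-u_h^M,u^M-u_h^M) \\
&= -\Im\cA_h^M(u^M-u_h^M,u^M-v_h) \\
&\le 2\,\vertiii{u^M-u_h^M}_{\mathrm{TDG}_M}\,\vertiii{u^M-v_h}_{\mathrm{TDG}_M^+};
\end{align*}
dividing and taking the infimum over $v_h$ concludes. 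The subtlest step in the whole argument is the norm property: both the sign inequality $\Im\int T_M^\pm w\,\cw\,\rd s\ge 0$ from Lemma~\ref{lemma:l2prod} and the well-posedness of the homogeneous truncated BVP must hold, which is precisely why the hypotheses $M\ge M_\star$ and \eqref{eq:NonTrap} have to be invoked jointly at this point.
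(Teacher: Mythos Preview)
Your proposal is correct and follows essentially the same route as the paper: coercivity from \eqref{eqn:bilinear_exact_new} together with the sign of $\Im\int_{\GpmH}T_M^\pm w\,\cw\,\rd s$ from Lemma~\ref{lemma:l2prod}, the norm property from well-posedness of the homogeneous truncated BVP (Theorem~\ref{wellposed}/Corollary~\ref{cor:TruncWP}), continuity by termwise Cauchy--Schwarz on \eqref{eqn:bilinear_magic}, and the standard coercivity--consistency--continuity chain for quasi-optimality. The only cosmetic difference is the order in which you treat coercivity and the norm property; your counting argument for the constant~$2$ actually yields $\sqrt 2\le2$, which is consistent with (and slightly sharper than) the paper's stated bound.
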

\begin{proof}
Let $w \in T(\Mh)$ be such that $\vertiii{w}_{\mathrm{TDG}_M}= 0$.
The jumps of $w$ on mesh faces vanish, implying that $w\in H^1_{\alpha_0}(\Omega)$ and that $w$ satisfies the Helmholtz equation in $\Omega$.
Moreover, $w = 0$ on $\GD$ and $\nabla_h w \cdot \bn - T_M^\pm w = 0$ on $\GpmH$, so $w$ is solution of \eqref{trunc} with $u^{\mathrm{inc}}=0$.
The well-posedness stated in Theorem~\ref{wellposed} for $M=\infty$, and in Corollary~\ref{cor:TruncWP} for $M<\infty$, implies that $w=0$, so $\vertiii{\cdot}_{\mathrm{TDG}_M}$ is a norm.

From the expression of $\cA_h^M$ in \eqref{eqn:bilinear_exact_new},
we have, for all $w\in T(\Mh)$,
\begin{align*}
	-\Im \cA_h^M(w, w) = &\ 
	\left\| \xi^{-\frac{1}{2}} \pb^{\frac{1}{2}} \llbracket \nabla_h w \rrbracket_N \right\|^2_{L^2(\FhI)} 
	+ \left\| \xi^{\frac{1}{2}}\pa^{\frac{1}{2}} \llbracket w \rrbracket_N \right\|^2_{L^2(\FhI)}
	+\left\| \kappa^{\frac{1}{2}} \pa^{\frac{1}{2}} w \right\|^2_{L^2(\GD)}\\
	& + \Im \int_\GH T_M^+ w \, \cw \rd s 
	+ \left\| \kappa^{-\frac{1}{2}} \pd^{\frac{1}{2}} ( \partial_\bn w - T_M^+ w ) \right\|^2_{L^2(\GH)}\\
	& + \Im \int_\GmH T_M^- w \, \cw \rd s 
	+ \left\| \kappa^{-\frac{1}{2}} \pd^{\frac{1}{2}} ( \partial_\bn w - T_M^- w ) \right\|^2_{L^2(\GmH)}\\ \ge &\ 
	\vertiii{w}^2_{\mathrm{TDG}_M},
\end{align*}
where the inequality follows from Lemma \ref{lemma:l2prod} and Remark~\ref{imag_rem}.

The continuity \eqref{eq:ContinuityA} of the sesquilinear form follows from the application of Cauchy--Schwarz inequality to all terms in \eqref{eqn:bilinear_magic}. 
The coefficient 2 arises from the terms on $\GpmH$. 
The continuity of $\ell_h^M$ uses that $\|u^\inc\|_{\GH}=\sqrt L$ and $\beta_0^+=-\kappa^+\sin\theta$.

The coercivity \eqref{eq:Coercivity} implies the well-posedness of the discrete Galerkin problem \eqref{eq:TruncDG}, for any finite-dimensional $V_p(\Mh)\subset T(\Mh)$.

Finally, by the first part of the proposition, the quasi-optimality \eqref{quasiopt} follows from
\begin{align*}
	\vertiii{u^M - u_h^M}_{\mathrm{TDG}_M}^2 & \leq -\Im \, \cA_h^M (u^M - u_h^M, u^M - u_h^M) \\
	& \leq | \cA_h^M (u^M - u_h^M, u^M - u_h^M) | \\
	& = | \cA_h^M (u^M - u_h^M, u^M - v_h) | \\
	& \leq 2 \vertiii{u^M - u_h^M}_{\mathrm{TDG}_M} \, \vertiii{u^M - v_h}_{\mathrm{TDG}_M^+}
	\qquad \forall v_h\in V_p(\Mh).
\end{align*}
\end{proof}

See Remark \ref{rem:PWapprox} for a brief discussion of the approximation bounds that can be deduced from \eqref{quasiopt} for a discrete space $V_p(\Mh)$ made of plane waves.

\begin{rem}[Combined truncation and discretization error]
To control the error $e_h^M:=u^\infty-u_h^M$ taking into account both the TDG discretization and the DtN truncation, it seems natural to use a Strang-type argument.
Assume that $u^\infty$, solution of \eqref{weak}, belongs to $T(\Mh)$, which follows from elliptic regularity under suitable assumptions (e.g.~\cite[Lemma~3.1]{Lechleiter2010}).
Then, it satisfies $\cA_h^\infty(u^\infty,v)=\ell_h^\infty(v)$ for all $v\in T(\Mh)$, and 
\begin{align}\nonumber
	\|e_h^M\|_{\mathrm{TDG}_M}^2
	&\le |\cA_h^M(u^\infty-u_h^M,e_h^M)|
	=|\cA_h^M(u^\infty,e_h^M)-\ell_h^M(e_h^M)|\\
	&\le |\cA_h^M(u^\infty,e_h^M)-\cA_h^\infty(u^\infty,e_h^M)|+|\ell_h^\infty(e_h^M)-\ell_h^M(e_h^M)|.
	\label{eq:ehM}
\end{align}
The bound on $T^\pm-T^\pm_M$ in Proposition~\ref{prop:TminusTM} allows to control both these terms, exploiting the regularity of $u^\inc$ and $u^\infty$ on $\GpmH$.
For instance, the right-hand side difference is bounded by an $\mathcal O(M^{-t})$ term 
for any $t>0$ as
\begin{align*}
	|\ell_h^\infty(e_h^M)-\ell_h^M(e_h^M)|
	&=\Big|\int_\GH -2\ri\beta_0^+ u^\inc \pd\ri\kappa^{-1} \overline{(T^+_\infty e_h^M-T^+_M e_h^M)}\rd s\Big|
	\\
	&\le 2\sin\theta\|\pd\|_{L^\infty(\GH)}\|u^\inc\|_{1+t,\alpha_0} \|T^+_\infty e_h^M-T^+_M e_h^M\|_{-1-t,\alpha_0}
	\\
	&\le 2\sin\theta\|\pd\|_{L^\infty(\GH)}\|u^\inc\|_{1+t,\alpha_0} \Big(\frac{2\pi}L M-|\alpha_0|\Big)^{-t} 
	\|e_h^M\|_{0,\alpha_0}.
\end{align*}
However, $\|e_h^M\|_{0,\alpha_0}=\|e_h^M\|_{L^2(\GD)}$ can be bounded by $\|e_h^M\|_{\mathrm{TDG}_M^+}$ \eqref{normaTDG+} but not by the weaker $\|e_h^M\|_{\mathrm{TDG}_M}$ norm \eqref{normaTDG} (which we would like to obtain at the right-hand side, to cancel the second power in \eqref{eq:ehM}).
So this kind of argument does not easily allow to control the combined truncation/discretization error.
\end{rem}

\subsection{Plane-wave basis and linear system assembly}
We define a discrete plane-wave space.
For a mesh element $K \in \Mh$, we denote by $V_p(K)$ the plane wave space on $K$ spanned by $p$ plane waves, $p \in \mathbb{N}$:
\begin{equation} \label{pwspace}
V_p(K) = \bigg\{ \; v \in L^2(K) :  v(\bx) = \sum_{j=1}^p \eta_j 
\exp\{\ri\kappa\bd_j \cdot \bx \}, \hspace{0.3cm} \eta_j \in \mathbb{C} \; \bigg\},
\end{equation}
where $\{ \bd_j \}_{j=1}^p \subset \mathbb{R}^2$, with $|\bd_j|=1$, are different propagation directions.
To obtain isotropic approximations, uniformly-spaced directions can be chosen as 
$\bd_j = ( \cos\frac{2\pi j}p, \sin\frac{2\pi j}p)$, $j=1,\ldots,p$. 
For simplicity, we choose the same number $p$ of directions in every element $K\in\Mh$.
The value of $\kappa=k\sqrt{\varepsilon}$ depends on the region where the element $K$ is located;
recall that we consider meshes such that $\varepsilon$ and $\kappa$ are constant inside each element.
We define the global discrete space $V_p(\Mh)$ as
\begin{equation} \label{pwglobal}
V_p(\Mh)=\bigoplus_{K\in\Mh}V_p(K)
=\big\{ \; v \in L^2(\Omega) : v_{|K} \in V_p(K), \;\forall K \in \Mh \; \big\}.
\end{equation}
We denote by $\varphi_j$, for $j\in\{1,\ldots,N\}$ and $N=p\cdot\#\Mh$, the $j$-th element of the basis of $V_p(\Mh)$, defined as one of the exponentials in \eqref{pwspace} in one of the mesh elements and zero otherwise.
The matrix and the right-hand side of the linear system $A\mathbf u=\mathbf L$ corresponding to the DtN-TDG scheme \eqref{eq:TruncDG} have entries 
$A_{j,l}=\cA_h^M(\varphi_l,\varphi_j)$ and $L_j=\ell_h^M(\varphi_j)$, for $j,l=1,\ldots,N$.

An important advantage of the use of plane waves is that all matrix and vector entries can easily be computed analytically, as detailed below,
reducing the errors caused by numerical quadrature and the computational effort.

Recall that the mesh $\Mh$ is quasi-periodic in $x_1$, as in Figure~\ref{fig:periodicity}, and that $\Gleft$ is identified to $\Gright$.
Mesh faces contained in $\Gleft$ are treated as internal faces.
To assemble the system matrix, we have to include the interaction terms between basis function associated to elements respectively on the right and left side of the domain, which are adjacent to the same face in $\Gleft$.
In this interaction, we have to consider the quasi-periodicity of the basis functions, in the sense that we multiply the function associated to the left side by the quasi-periodicity constant $\re^{\ri\alpha_0 L}$. 
Multiplying the other function by $\re^{-\ri\alpha_0 L}$ gives the same result, since the test (but not the trial) functions are conjugated in all terms in $\cA_h^M$.

To compute the matrix entries, we first consider the case of two basis functions $\varphi_j,\varphi_l$ supported in the same element $K$ without faces on $\GpmH$.
Using that $\dnK \re^{\ri\kappa\bx\cdot\bd}=\ri\kappa\bn_K\cdot\bd\,\re^{\ri\kappa\bx\cdot\bd}$ and \eqref{DtN_bilinear_exact},
\begin{align}\nonumber
A_{j,l} = & \sum_{F\in \mathcal{F}_K, \; F\not\subset \GD} 
\left[ -\frac12\ri\kappa(\bd_j\cdot\bn_K+\bd_l\cdot\bn_K)
- \ri\pb\, \xi^{-1} \kappa^2 \bd_l\cdot\bn_K\,\bd_j\cdot\bn_K -\ri\pa  \xi \, \right] \int_{F} 
\re^{\ri\kappa\bx\cdot(\bd_l-\bd_j)} \rd s\\
& + \sum_{F\in \mathcal{F}_K, \; F\subset \GD}  \ri\kappa\left[ \,-\pa-\bd_l \cdot \bn \,\right] \int_{F} \re^{\ri\kappa\bx\cdot(\bd_l-\bd_j)} \rd s, 
\label{eq:AjlInner}
\end{align}
where $\mathcal{F}_K$ is the set of the faces of $K$.
When $\varphi_l$ and $\varphi_j$ are supported in the elements $K_1$ and $K_2\in\Mh$, respectively, and these share the face $F=\partial K_1\cap\partial K_2$, 
\begin{align}\label{eq:AjlInterface}
A_{j,l} & = \left[\frac12(\ri\kappa_2\bd_j\cdot\bn+\ri \kappa_1\bd_l\cdot\bn) 
+\ri\pb\xi^{-1} \kappa_1 \kappa_2\bd_l\cdot\bn\,\bd_j\cdot\bn 
+\ri\pa\xi\,\right] \int_F \re^{ \ri\kappa_1\bx\cdot\bd_l - \ri\kappa_2\bx\cdot\bd_j}\rd s,
\end{align}
with $\bn=\bn_{K_1}$, $\kappa_j=\kappa|_{K_j}$ and $\xi$ as in (\ref{xi_2}).
All terms in \eqref{eq:AjlInner} and \eqref{eq:AjlInterface} are easily computed analytically from the local wavenumber $\kappa$, the plane wave directions $\bd_j$, the numerical flux parameters $\pa,\pb,\xi$, and the mesh.

We now focus on the faces located on $\GH$. 
For basis functions $\varphi_j,\varphi_l$ supported on elements with a face lying on $\GH$, we include in the $A_{j,l}$ matrix entry also the contribution corresponding to the integral over $\GH$ in \eqref{DtN_bilinear_exact}:
\begin{align} \nonumber
& \int_\GH \biggl(  \varphi_l \, \overline{\dn \varphi_j}  
- T_M^+ \varphi_l \, \overline{\varphi_j}
- \pd \, \ri\kappa^{-1} \, \left(\dn \varphi_l -T_M^+ \varphi_l\right) \, \overline{\left(\dn \varphi_j -T_M^+ \varphi_j\right)} \biggl) \rd s 
\\ \label{local}
&=  \int_\GH  \left( \varphi_l -\pd \, \ri\kappa^{-1} \, \dn \varphi_l \right) \, \overline{\dn \varphi_j} \rd s \\ \label{global}
&\quad - \int_\GH \biggl( T_M^+ \varphi_l \, \left( \overline{\varphi_j} - \pd \, \ri\kappa^{-1} \, \overline{\dn \varphi_j} \right) + \pd \, \ri\kappa^{-1} \, \left( T_M^+ \varphi_l \, \overline{T_M^+ \varphi_j} - \dn \varphi_l \, \overline{T_M^+ \varphi_j} \right) \biggl) \rd s.
\end{align}
The local term (\ref{local}) is non-zero only if the two basis functions $\varphi_l$ and $\varphi_j$ are associated to the same upper boundary element $K$. 
Then, 
\eqref{local} is
\begin{align*}
-\ri\kappa \, \bd_j\cdot\bn \, (1 + \pd \, \bd_l\cdot\bn)\,
\int_{\partial K\cap\GH} \re^{\ri\kappa\bx\cdot(\bd_l-\bd_j)}\rd s,
\end{align*}
with $\bn=(0,1)$.
Since $T_M^+\varphi_l$ is supported on all $\GH$, the global term \eqref{global} is non-zero for every basis function pair such that both the supports of $\varphi_j$ and $\varphi_l$ include a portion of $\GH$.
We first explicitly compute the $2M+1$ Fourier coefficients \eqref{fourier_coeff} of 
$\varphi_l(x_1,H)=\sum_{n\in\IZ}\varphi_l^n\re^{\ri\alpha_nx_1}$ 
corresponding to $|n|\le M$ as
\begin{align*}
\varphi_l^n  
= \frac1L\re^{\ri\kappa(\bd_l)_{_2}H} \int_{p_1}^{p_2}\re^{\ri(\kappa (\bd_l)_{_1} -\alpha_n) x_1}\rd x_1
=\begin{cases}
	\frac{\varphi_l(p_2,H)\re^{-\ri\alpha_n p_2}-\varphi_l(p_1,H)\re^{-\ri\alpha_n p_1}}{\ri L(\kappa (\bd_l)_{_1} -\alpha_n)}
	& \kappa (\bd_l)_{_1}\ne\alpha_n,\\
	\frac{p_2-p_1}L\re^{\ri\kappa(\bd_l)_{_2}H}
	& \kappa (\bd_l)_{_1}=\alpha_n,
\end{cases}
\end{align*}
where $p_1$, $p_2$ are the $x_1$-coordinates of the endpoints of the face of $K$ (support of $\varphi_l$) that lies on $\GH$, i.e.\ $\partial K\cap\GH=[p_1,p_2]\times \{H\}$.
The $|\varphi_l^n|=\mathcal O_{n\to\infty}(n^{-1})$ behaviour, due to the $\alpha_n$ at the denominator, is consistent with the regularity $\varphi_l(\cdot,H)\in H^{1/2-\epsilon}(\GH)$ for all $\epsilon>0$, due to its discontinuity at $p_1,p_2$ (recall \eqref{eq:HsNorm}).
The computation of all the Fourier coefficient has cost $\mathcal O(Mp/h)$, for a quasi-uniform mesh, but it is in practice negligible as the $\varphi_l^n$ formula is explicit.
Then, using the definition \eqref{DtN_trunc} of the truncated DtN, the term \eqref{global} can be written as
\begin{align*}
&-( 1 - \pd \, \bd_j \cdot \bn) \int_\GH T_M^+ \varphi_l \;  \overline{\varphi_j} \rd s 
- \pd \, \bd_l \cdot \bn \int_\GH \varphi_l \, \overline{T_M^+ \varphi_j} \rd s
- \pd \, \ri\kappa^{-1} \int_\GH T_M^+ \varphi_l \, \overline{T_M^+ \varphi_j} \rd s
\\
&=-( 1 - \pd \, \bd_j \cdot \bn) 
\;\ri \sum_{n=-M}^M \varphi_l^n \beta_n^+ \re^{-\ri\kappa \, (\bd_j)_{_2} H} \int_{\partial K_2\cap\GH} 
\re^{\ri(\alpha_n-\kappa (\bd_j)_{_1}) x_1}  \rd x_1
\\
&\quad + \pd \, \bd_l \cdot \bn 
\,\ri \sum_{n=-M}^M \overline{\varphi_j^n \beta_n^+} \re^{\ri\kappa \, (\bd_l)_{_2}\hspace{0.03cm} H} 
\int_{\partial K_1\cap\GH} \re^{\ri(\kappa \,(\bd_l)_{_1}-\alpha_n) \, x_1} \, \rd x_1
- \pd \, \ri\kappa^{-1}L\sum_{n=-M}^M   | \beta_n^+ |^2\;\varphi_l^n\; \overline{\varphi_j^n},
\end{align*}
where $\bn=(0,1)$ and $K_1,K_2\in\Mh$ are the supports of $\varphi_l,\varphi_j$, respectively.

Analogous results can be derived for the integrals on $\GmH$ in \eqref{DtN_bilinear_exact}.

Lastly, we focus on the right-hand side vector of the linear system $A\mathbf u=\mathbf L$.
The component $L_j$ is nonzero only if the basis function $\varphi_j$ is supported in an element $K$ with a face on $\GH$. 
Using the definition \eqref{operator} of $\ell_h^M$ and 
$u^\inc (x_1,x_2)=\re^{\ri\alpha_0 x_1 - \ri \beta_0^+ x_2}$, so that 
$\int_\GH u^\inc \, \overline{T_M^+ \varphi_j}\rd s 
=-\ri L\overline{\beta_0^+\varphi_j^0}\re^{-\ri\beta_0^+H}$,
we compute
\begin{align*}
L_j &= \ell^M_h(\varphi_j) 
=  \int_\GH - 2\ri\beta_0^+ \, u^\inc \left( \overline{\varphi_j} -\pd \, \ri\kappa^{-1} \, \overline{\partial_\bn \varphi_j} + \pd \, \ri\kappa^{-1} \, \overline{T_M^+ \varphi_j} \right) \rd s \\ 
& =  -2\ri\beta_0^+ \, (1 - \pd \, \bd_j \cdot \bn ) \int_\GH u^\inc \, \overline{\varphi_j} \rd s
+2 \, \pd \,\beta_0^+ \, \kappa^{-1} \int_\GH u^\inc \, \overline{T_M^+ \varphi_j} \rd s
\\
&= -2\ri\beta_0^+ \ (1 - \pd \, \bd_j \cdot \bn )\, \re^{-\ri(\beta_0^++\kappa(\bd_j)_2)H} 
\int_{\partial K\cap\GH} \re^{\ri(\alpha_0-\kappa(\bd_j)_1)x_1}\rd s
- 2 \ri L\, \pd |\beta_0^+|^2 \, \kappa^{-1}\overline{\varphi_j^0}\,\re^{-\ri\beta_0^+H}.
\end{align*}

\begin{rem}[No quadrature is needed]
All the matrix and vector entries can be computed with a closed formula, integrating complex exponentials over segments.
Thus no quadrature error is incurred, and the computational cost of the assembly is independent of the wavenumber.
\end{rem}

\begin{rem}[Convergence rates, instability, and evanescent plane waves]\label{rem:PWapprox} \
Plane wave spaces admit best-approximation bounds in Sobolev norms that converge exponentially in the local dimension $p$, and algebraically in the mesh size $h$, see \cite[Thm.~5.2, Cor.~5.5]{Moiola2011}.
The convergence rates are asymptotically faster than those ensured by polynomial spaces: \cite[eq.~(45)]{Moiola2011} gives that $u\in H^{k+1}(K)$ can be approximated in $H^j(K)$ norm with rate $\mathcal O((q/\log q)^{k+1-j})$ on a convex element $K\subset\IR^2$, with $q$ proportional to the dimension $p$ of the local plane wave space; the analogous result with rate $\mathcal O(q^{k+1-j})$  for a $N$-dimensional polynomial space require $q\sim\sqrt N$ (i.e.\ $q$ represents the polynomial degree).
For sufficiently regular solutions, combined with the quasi-optimality \eqref{quasiopt}, these bounds allow to prove convergence rates for the TDG discretization error $u^M-u^M_h$.

The plane wave convergence analysis can be extended to lossy materials ($\varepsilon\notin\IR$), where all plane waves are evanescent, with some modifications using Remarks 2.3.6, 3.3.4, 3.4.12 and 3.5.9 in \cite{AndreaPhD}.

However, if either $\GD$ or the constant-$\varepsilon$ regions have corners, the Helmholtz solution presents singularities.
An $hp$ approach, with local $h$-refinement near singularities and $p$-refinement away from them, may lead to root-exponential convergence in the number of degrees of freedom, see \cite{Hiptmair2016FoCM}.
However, this may also lead to strong numerical instabilities: these are visible in some of the plots of \S\ref{s:Numerics} as a flattening of the convergence rates for large values of $p$.

A promising remedy to this instability is the use of evanescent plane waves (EPWs): exponential Helmholtz solutions in the form $\re^{\ri\kappa\bx\cdot\bd}$ with $\bd\in\IC^2$ and $d_1^2+d_2^2=1$.
EPW discrete spaces have been successfully used to approximate Helmholtz solutions in \cite{Parolin2023,Robert2024}, showing considerable improvements over classical plane waves for singular and near-singular solutions.
\end{rem}

\section{Numerical experiments}\label{s:Numerics}

We report some numerical results relative to the DtN-TDG approximation of different boundary value problems, with smooth (\S\ref{s:2regions}, \ref{s:3regions}) and singular (\S\ref{s:quad}, \ref{s:bao}, \ref{s:dir}) solutions, with (\S\ref{s:dir}) and without impenetrable obstacles, including problems admitting infinitely many analytical solutions (\S\ref{s:non_uniq}).
Some of the domain configurations are taken from  
\cite{Bao2000}.
In all the examples, the space period is $L=2\pi$.

The DtN-TDG scheme has been implemented in MATLAB; all linear systems are solved with the ``backslash'' direct solver.
To create the triangulation on the domain $\Omega$, we use the MATLAB PDE toolbox, and to evaluate the $L^2(\Omega)$ and $H^1(\Omega)$ norms of the DtN-TDG error over the triangles of the mesh we use a Duffy quadrature rule, which is presented in \cite{Duffy1982} and implemented in \cite{Kubatko2024}. 
In all experiments we use as numerical flux parameters those corresponding to the original ultra weak variational formulation (UWVF) of Cessenat and Després \cite{Cessenat1998}, which are $\pa=\pb=\pd=\frac{1}{2}$ (see \cite[\S2.2.2]{Hiptmair2016}).

\subsection{Flat interface between two homogeneous materials}
\label{s:2regions}

We first consider the rectangular domain $\Omega=(0,2\pi)\times(-3,3)$ split by the horizontal line $\{x_2=0\}$ into two homogeneous regions:
$\varepsilon=\varepsilon^+=1$ in $\{x_2>0\}$, and $\varepsilon=\varepsilon^-$ in $\{x_2<0\}$ with $\Re\varepsilon^->1$.
For this simple setup, the exact solution is
\begin{equation*} 
u(x_1,x_2) = \begin{cases}
	\re^{\ri k(x_1 \cos \theta + x_2 \sin \theta)} + R\re^{\ri k(x_1 \cos \theta - x_2 \sin \theta)} & x_2>0,\\
	T\re^{\ri k(x_1 \cos \theta - x_2 \sqrt{\varepsilon^- -\cos^2\theta})} & x_2<0, \end{cases}
	\end{equation*}
	where the reflection and the transmission coefficients are
	\begin{equation*}
T = \frac{2 \sin \theta}{\sin \theta - \sqrt{\varepsilon^- -\cos^2\theta}}, \hspace{1.0cm} R = \frac{\sin \theta + \sqrt{\varepsilon^- - \cos^2\theta}}{\sin \theta - \sqrt{\varepsilon^- -\cos^2\theta}}.
\end{equation*}
We consider two examples: 
\begin{enumerate}[(i)]
\item 
a lossless medium with $\varepsilon^-=1.5$ and incoming plane wave direction $\theta=-\pi/3$, and 
\item 
a lossy medium with $\varepsilon^-=(1.25 + 0.1\ri)^2$ together with $\theta=-\pi/4$.
\end{enumerate}
In both the experiments, we choose the wavenumber $k=5$ and the mesh parameter $h=1.5$, resulting in a mesh of 36 triangles. 
Figures \ref{fig:no_abs} and \ref{fig:abs} show the numerical solutions with $p=30$, the corresponding errors, and the $L^2(\Omega)$ and $H^1(\Omega)$ error norms as the number of local plane wave functions $p$ increases.
In both cases we observe exponential convergence in $p$.
The error concentrates near the element boundaries, as typical for TDG schemes, see e.g.\ \cite{Robert2024}.

\begin{figure}[htb]%
\includegraphics[height=.2\textwidth]{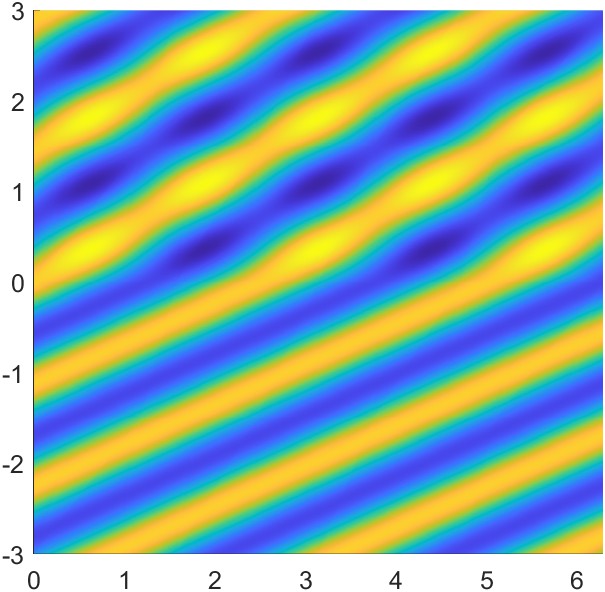}\hspace*{5pt}
\includegraphics[height=.2\textwidth]{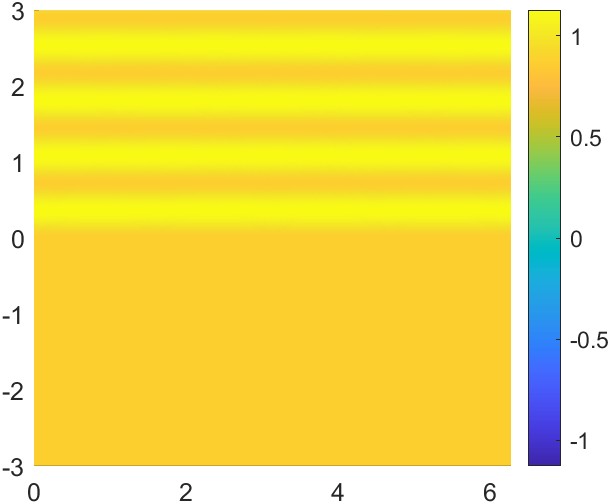}\hfill
\includegraphics[height=.2\textwidth]{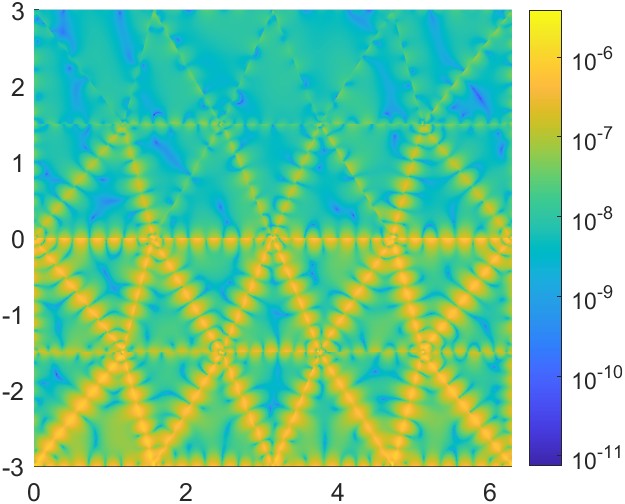}\hfill
\includegraphics[height=.21\textwidth,clip,trim=40 0 0 0]{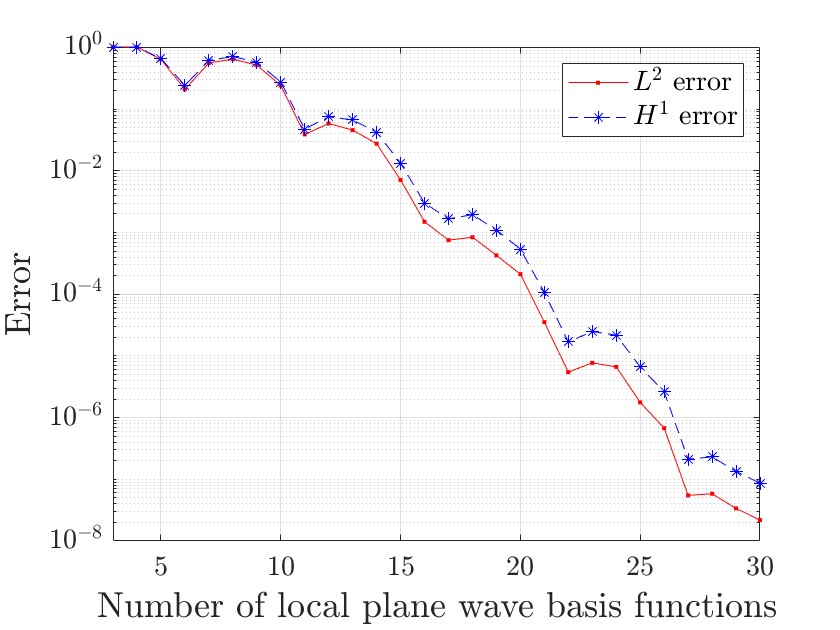}
\caption{
	Flat interface example of \S\ref{s:2regions}, lossless case (i) ($\varepsilon^-=1.5$, $\theta=-\pi/3$).
	Left to right: real part and absolute value of the numerical solution; 
	absolute value of the pointwise error (in logarithmic color scale) for $h=1.5$ and $p=30$; convergence of the $L^2(\Omega)$ and $H^1(\Omega)$ relative error norms for $p \in \{3,\ldots,30\}$ on the same mesh.
}
\label{fig:no_abs}
\end{figure}
\begin{figure}[htb]

\includegraphics[height=.2\textwidth]{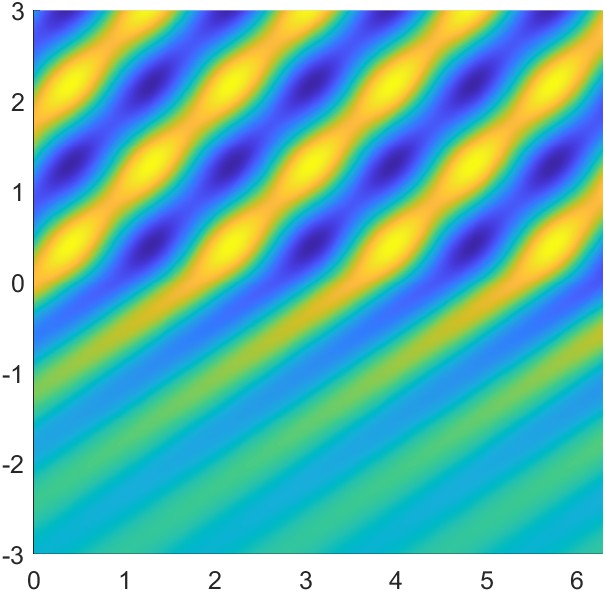}\hspace*{5pt}
\includegraphics[height=.2\textwidth]{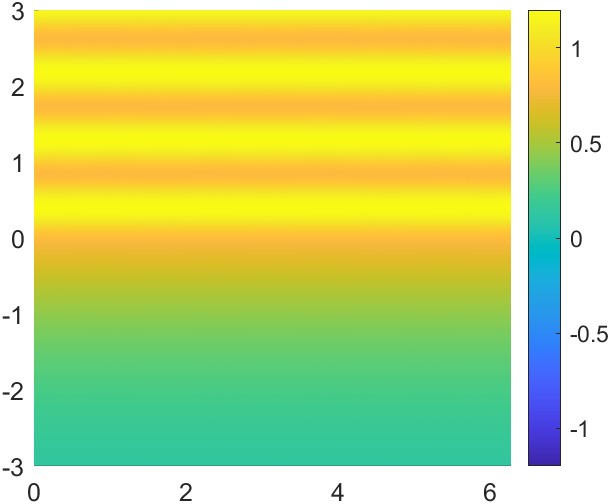}\hfill
\includegraphics[height=.2\textwidth]{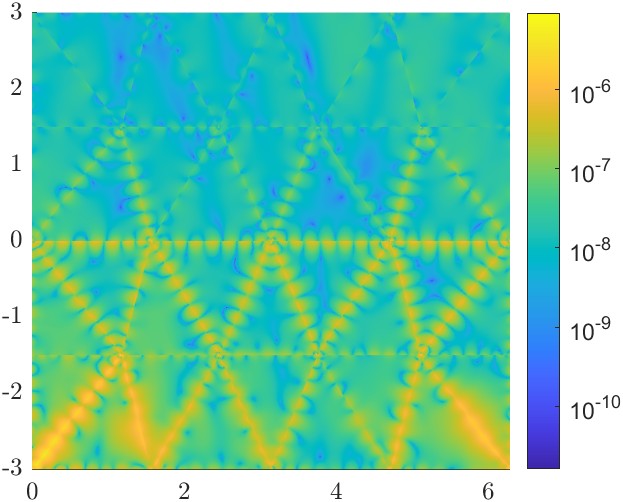}\hfill
\includegraphics[height=.21\textwidth,clip,trim=40 0 0 0]{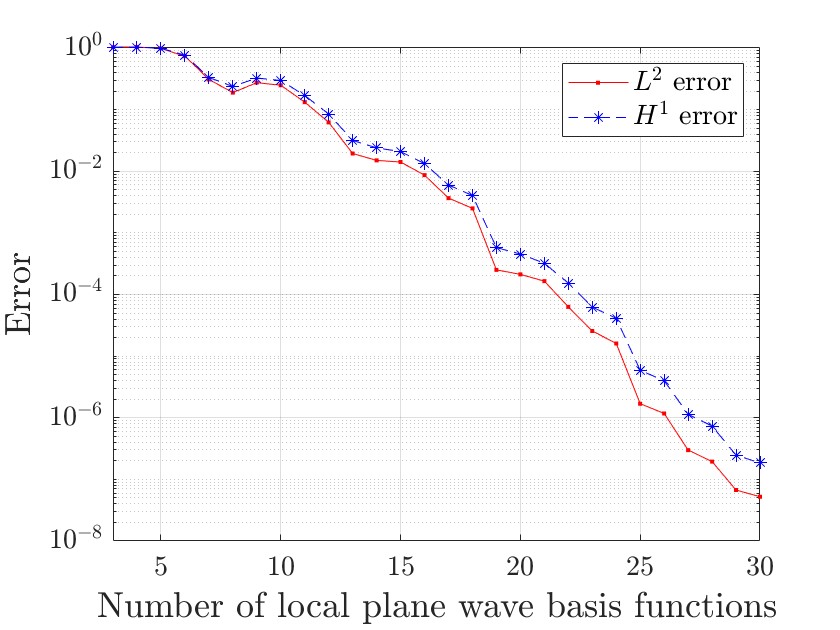}
\caption{Same plots as in Figure~\ref{fig:no_abs} for the lossy case (ii) of \S\ref{s:2regions} ($\varepsilon^-=(1.25 + 0.1\ri)^2$, $\theta=-\pi/4$).
}
\label{fig:abs}
\end{figure}

For this simple example (and the next one in \S\ref{s:3regions}), in the Fourier expansion of $u$ on any horizontal line, only the $u_0$ coefficient is non-zero.
Thus $u=u^M$ in \eqref{nontrunc}--\eqref{trunc} for all $M\in\IN$, and the truncation parameter $M$ in the DtN-TDG \eqref{eq:TruncDG} is irrelevant.

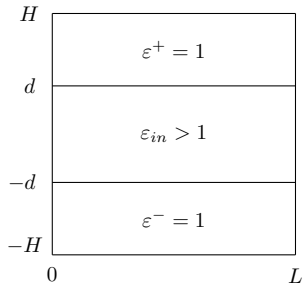
\begin{figure}[htb]
\centering
\begin{tikzpicture}[scale=0.8, every node/.style={scale=0.8}]
	\node[] at (-0.38,4) {$H$};
	\node[] at (-0.45,0.15) {$-H$};
	\node[] at (-0.5, 1.2) {$-d$};
	\node[] at (-0.37, 2.8) {$d$};
	\node[] at (0, -0.33) {$0$};
	\node[] at (4, -0.35) {$L$};
	\node[] at (2,3.4) {$\varepsilon^+ = 1$};
	\node[] at (2,2) {$\varepsilon_{in} > 1$};
	\node[] at (2,0.6) {$\varepsilon^- = 1$};
	\draw (0,0) -- (4,0); 
	\draw (0,0) -- (0,4); 
	\draw (0,1.2) -- (4,1.2);
	\draw (0,2.8) -- (4,2.8);
	\draw (0,4) -- (4,4);
	\draw (4,4) -- (4,0);
\end{tikzpicture}
\caption{Sketch of the domain for the examples in \S\ref{s:3regions} and \S\ref{s:non_uniq}.}
\label{fig:two_interface}
\end{figure}

\subsection{Two flat interfaces}
\label{s:3regions}

We separate the domain $\Omega=(0,2\pi)\times(-5,5)$ in three homogeneous regions:
$\varepsilon=\varepsilon^+=1$ for $x_2>d$ and $x_2<-d$, 
and 
$\varepsilon=\varepsilon_{in}>1$ for $-d<x_2<d$
(see Figure~\ref{fig:two_interface}).
Defining $\gamma := \sqrt{\varepsilon_{in} - \cos^2 \theta}$, the total field in $\Omega$ is:
\begin{equation} \label{sol_triple}
u(x_1,x_2)  = \begin{cases} 
	\re^{\ri k(x_1 \cos \theta + x_2 \sin \theta)} + R\re^{\ri k(x_1 \cos \theta - x_2\sin\theta)} & x_2>d,\\
	T_1\re^{\ri k(x_1 \cos \theta - x_2 \gamma)} + T_2\re^{\ri k(x_1 \cos \theta + x_2 \gamma)} & -d<x_2<d, \\
	T_3\re^{\ri k(x_1 \cos \theta + x_2 \sin \theta)} & x_2<-d. \end{cases}
	\end{equation}
	By enforcing the continuity of $u$ and $\partial_{x_2} u$ at $x_2 = d$ and $x_2 = -d$, we obtain the following 4-dimensional linear system for the reflection and transmission coefficients $R$, $T_1$, $T_2$, and $T_3$:
	\begin{equation*}
\begin{cases}
	\re^{-\ri kd\sin\theta} R - \re^{-\ri kd \gamma} \, T_1 - \re^{\ri kd \gamma} \, T_2
	= -\re^{\ri kd\sin \theta},\\
	-\sin \theta\, \re^{-\ri kd\sin\theta} R + \gamma \re^{-\ri kd \gamma} \, T_1 
	- \gamma \re^{\ri kd \gamma}\, T_2 = -\sin \theta \,\re^{\ri kd\sin\theta}, \\
	\re^{\ri kd \gamma} \, T_1 + \re^{-\ri kd \gamma} \, T_2 - \re^{-\ri kd\sin\theta} \, T_3 = 0, \\
	-\gamma \re^{\ri kd \gamma} \, T_1 + \gamma \re^{-\ri kd \gamma} \, T_2 - \sin \theta \, \re^{-\ri kd\sin\theta} \, T_3 = 0.
\end{cases}
\end{equation*}

We consider two instances:
\begin{enumerate}[(i)]
\item $k=5$, $\theta = - \pi / 3$, $\varepsilon_{in} = 2$, $d=2$, $h = 1.5$, resulting in a mesh of 68 triangles (Figure~\ref{fig:triple_eps2});
\item $k=5$, $\theta = - \pi / 4$, $\varepsilon_{in} = 10$, $d=2$, $h = 0.6$, resulting in a mesh of 392 triangles (Figure~\ref{fig:triple_eps10}).
\end{enumerate}
We observe again exponential $p$-convergence.

\begin{figure}[htb]
\centering

\includegraphics[height=.28\textwidth]{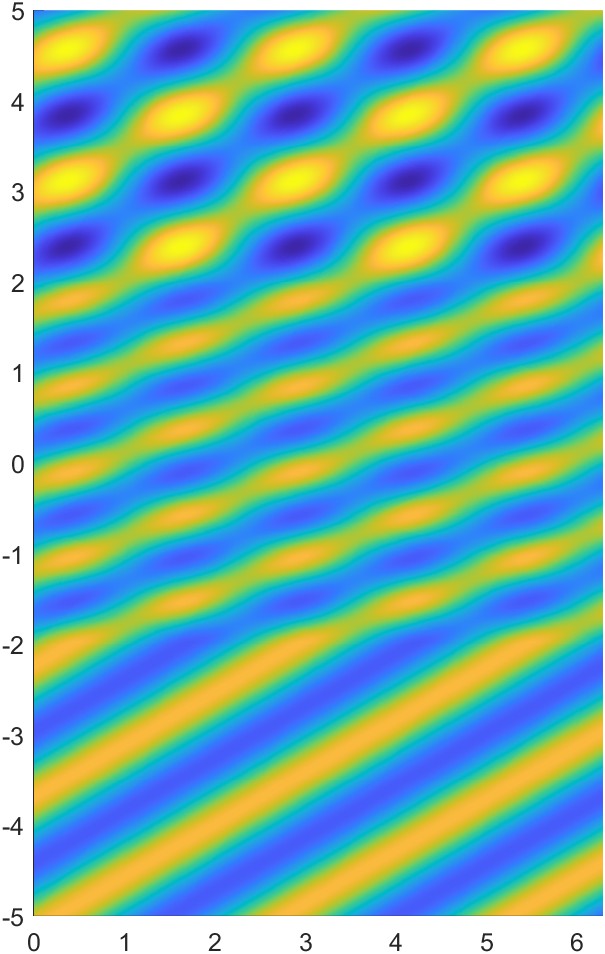}\hspace*{5pt}
\includegraphics[height=.28\textwidth]{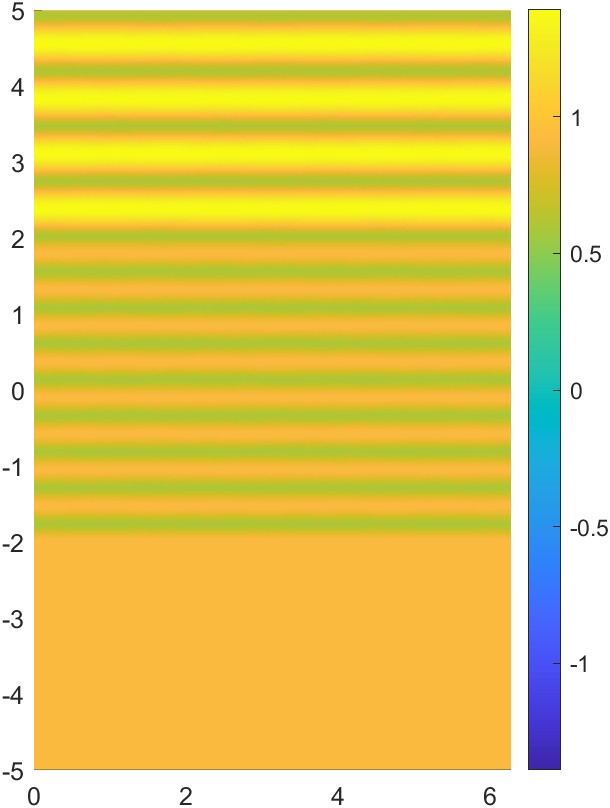}\hfill
\includegraphics[height=.28\textwidth]{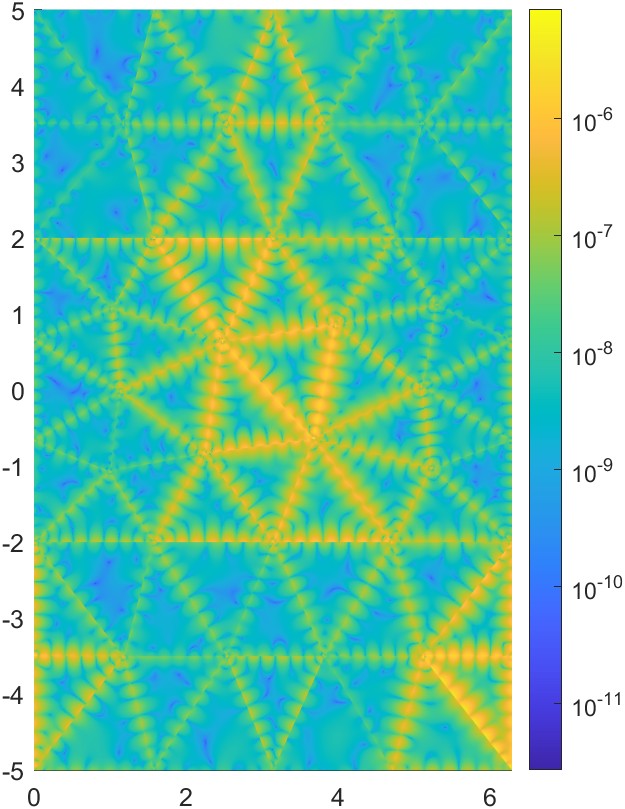}\hfill
\includegraphics[height=.29\textwidth,clip,trim=40 0 0 0]{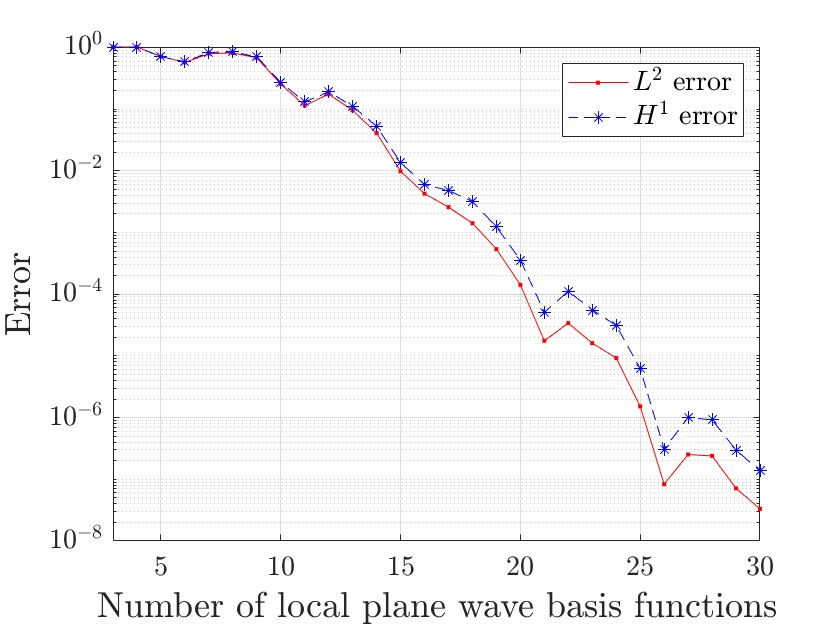}
\caption{Example with two interfaces of \S\ref{s:3regions}, low-contrast case (i) ($\varepsilon_{in}=2$, $\theta=-\pi/3$).
	Left to right: real part and absolute value of the numerical solution (with the same color bar); 
	absolute value of the pointwise error (in logarithmic color scale) for $h=1.5$ and $p=30$; convergence of the $L^2(\Omega)$ and $H^1(\Omega)$ relative error norms for $p \in \{3,\ldots,30\}$.
}
\label{fig:triple_eps2}
\end{figure}

\begin{figure}[htb]
\centering
\includegraphics[height=.28\textwidth]{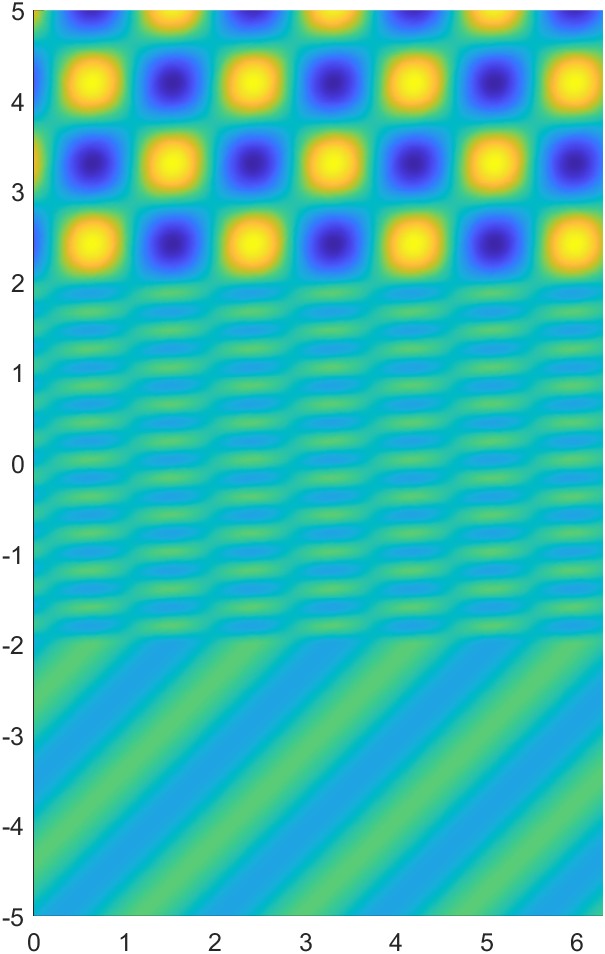}\hspace*{5pt}
\includegraphics[height=.28\textwidth]{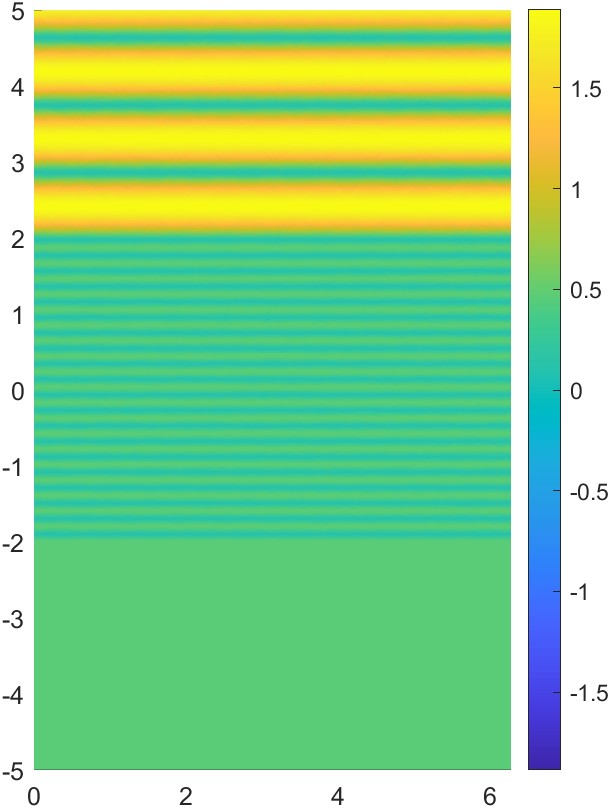}\hfill
\includegraphics[height=.28\textwidth]{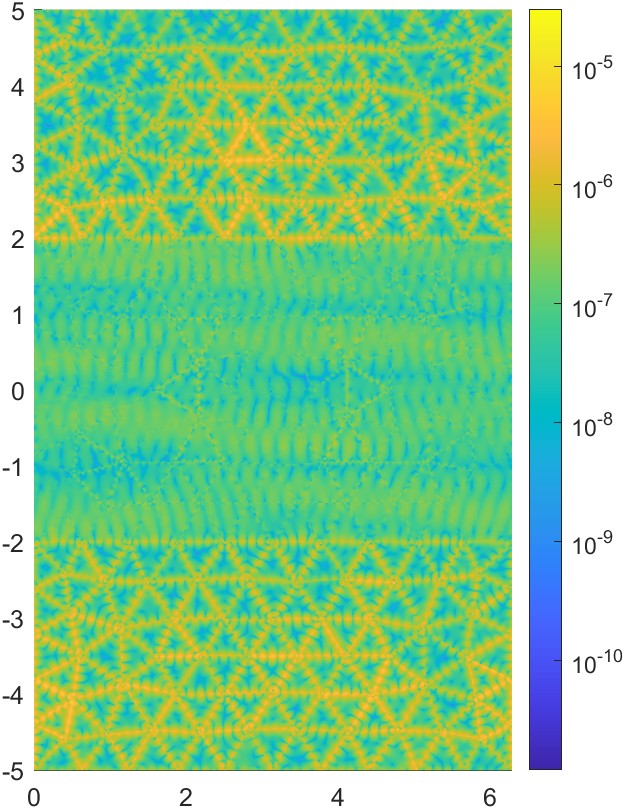}\hfill
\includegraphics[height=.29\textwidth,clip,trim=40 0 0 0]{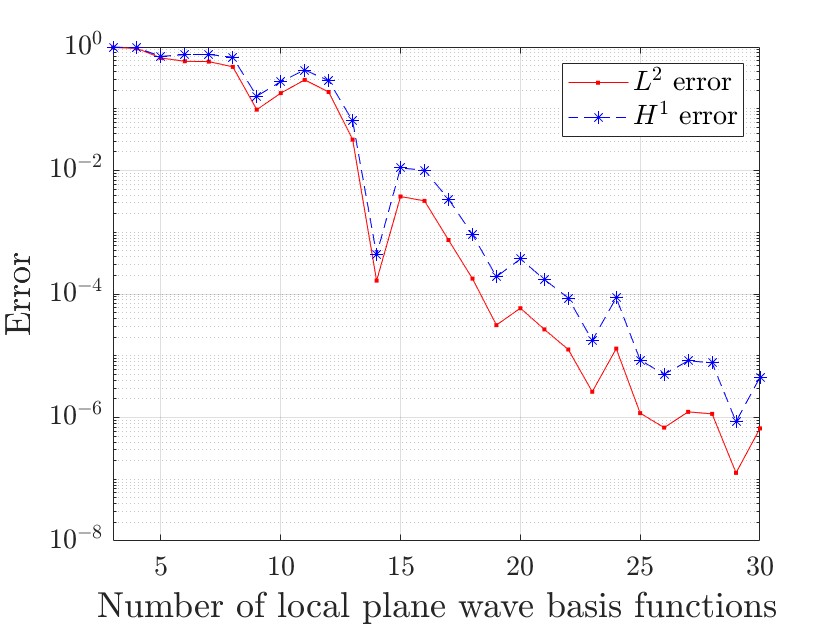}
\caption{Same plots as in Figure~\ref{fig:triple_eps2} for the high-contrast case (ii) ($\varepsilon_{in}=10$, $\theta=-\pi/4$, $h=0.6$).
}
\label{fig:triple_eps10}
\end{figure}

\begin{figure}[htb]
\centering
\includegraphics[height=.35\textwidth]{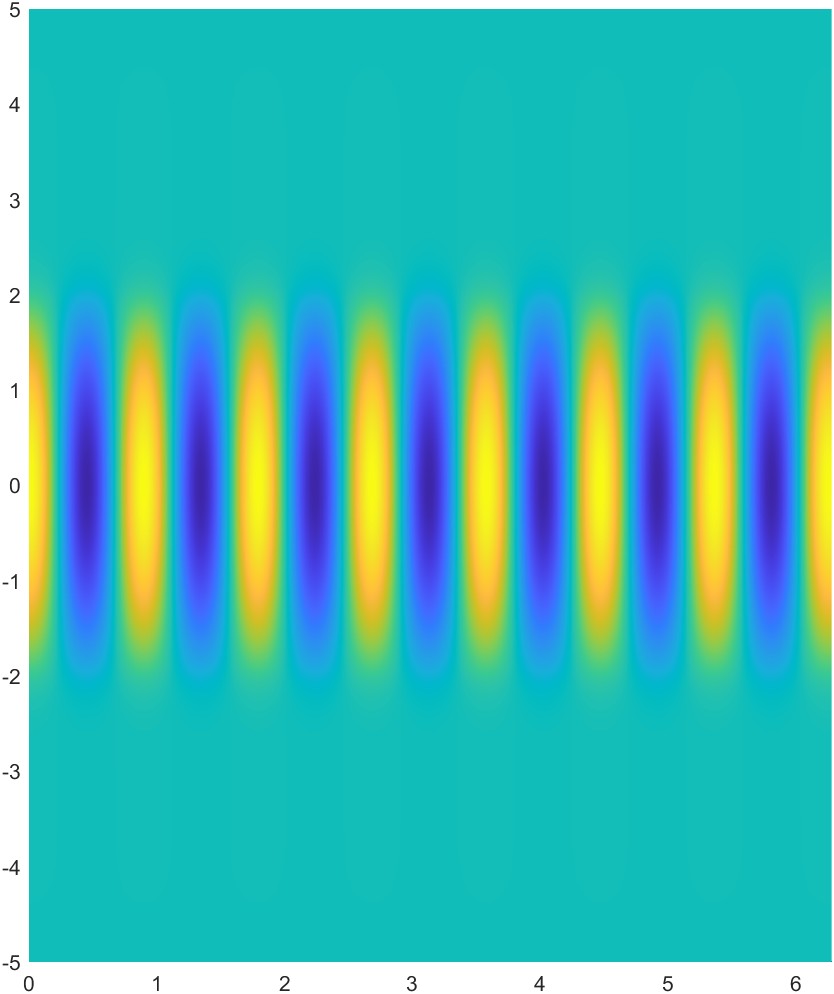}\qquad
\includegraphics[height=.35\textwidth]{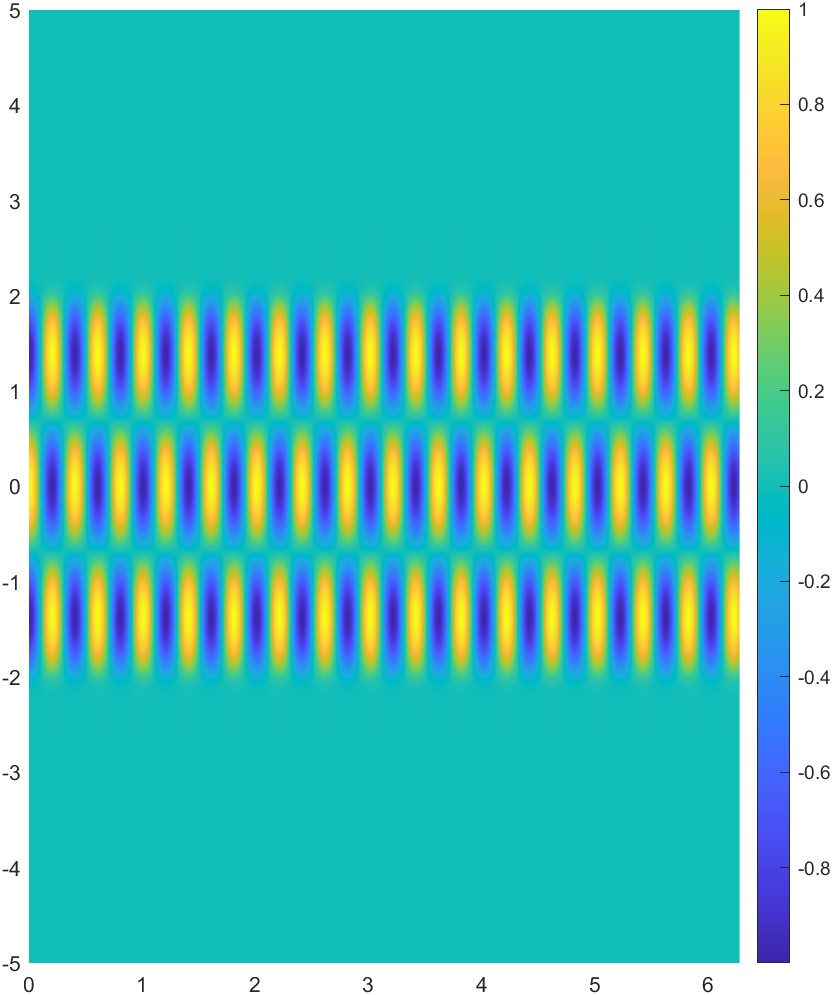}
\caption{Real part of the guided mode $u^{GM}$ in \eqref{trapped} for $\varepsilon_{in} = 2$ (left) and $\varepsilon_{in}=10$ (right).}
\label{fig:u_tr}
\end{figure}

\subsubsection{Problems with non-unique solutions} \label{s:non_uniq}

Given the values of $k,\varepsilon^+,\varepsilon_{in}$ and $d$, there are some values of $\theta$ for which problem \eqref{nontrunc} with the geometry as in \S\ref{s:3regions} ($\varepsilon$ constant in $\{|x_2|>d\}$ and $\{|x_2|<d\}$, $D=\emptyset$) is not well-posed.
Indeed, the non-trapping assumption \eqref{eq:NonTrap} is violated as $\varepsilon$ decreases away from $x_2=0$.
The fields in the problem kernel are the ``guided modes'', i.e.\ quasi-periodic solution of \eqref{nontrunc} that decay exponentially for $|x_2|\to\infty$, \cite[\S4.1]{BonnetBD1994}.
A guided mode can be written as 
\begin{equation} \label{trapped}
u^{GM}(x_1,x_2) = \begin{cases} C\re^{\ri k_1 x_1} \re^{-k_3 x_2} & x_2 > d, \\ 
\re^{\ri k_1 x_1} \cos(k_2 x_2) & -d < x_2 < d, \\ 
C\re^{\ri k_1 x_1} \re^{k_3 x_2} & x_2 < -d, \end{cases}
\end{equation}
where the parameters $C$, $k_1$, $k_2$, and $k_3$ are determined by 
imposing the continuity of $u^{GM}$ and $\partial_{x_2}u^{GM}$ at $x_2=\pm d$, and the Helmholtz equation in $|x_2|<d$ and $|x_2|>d$:
\begin{align*}
&\begin{cases} \; \cos(k_2 d) = C \re^{-k_3 d} \\ \; -k_2 \sin (k_2 d) = -k_3 C\re^{-k_3 d}\end{cases}
&&\implies \qquad k_3 = k_2 \tan(k_2 d),
\\
&\begin{cases} \; k_1^2 + k_2^2 = k^2 \varepsilon_{in} \\ \; k_1^2 - k_3^2 = k^2 \varepsilon^+\end{cases}
&&\implies \qquad k_3^2 =k^2(\varepsilon_{in}-\varepsilon^+)-k_2^2.
\end{align*}
By combining these equations, we obtain the following non-linear equation for $k_2$:
\begin{equation*} 
k_2^2 \big[ 1 + \tan^2(k_2 d) \big] = k^2 ( \varepsilon_{in} - \varepsilon^+ ).
\end{equation*}
This equation can be solved numerically for $k_2$, and the values of the other parameters can then be determined.
To ensure that $u^{GM}$ is quasi-periodic, the incoming wave propagation angle $\theta$ has to satisfy $k_1 = k \cos \theta + \frac{2 \pi}{L} n$ for some $n\in\IN$. 
Figure \ref{fig:u_tr} shows two plots of $u^{GM}$ for $k = 5$, $L = 2\pi$, $H = 5$, $d = 2$, and 
$\varepsilon_{in} = 2$ and $\varepsilon_{in} = 10$, corresponding to 
$k_2=0.713775889382297$, $\theta=-1.563806234657490$ and $k_2=2.279902057511183$, $\theta=-1.441203660687987$, respectively. 
The guided mode is very sensible to the wavenumber $k_2$ and the incident angle $\theta$, so these values cannot be approximated with only few decimal digits; this sensitivity has been observed for other (quasi-)resonances e.g.\ in \cite[Figure~2]{MS17}.

\begin{figure}[htb] \centering
\includegraphics[width=0.45\textwidth]{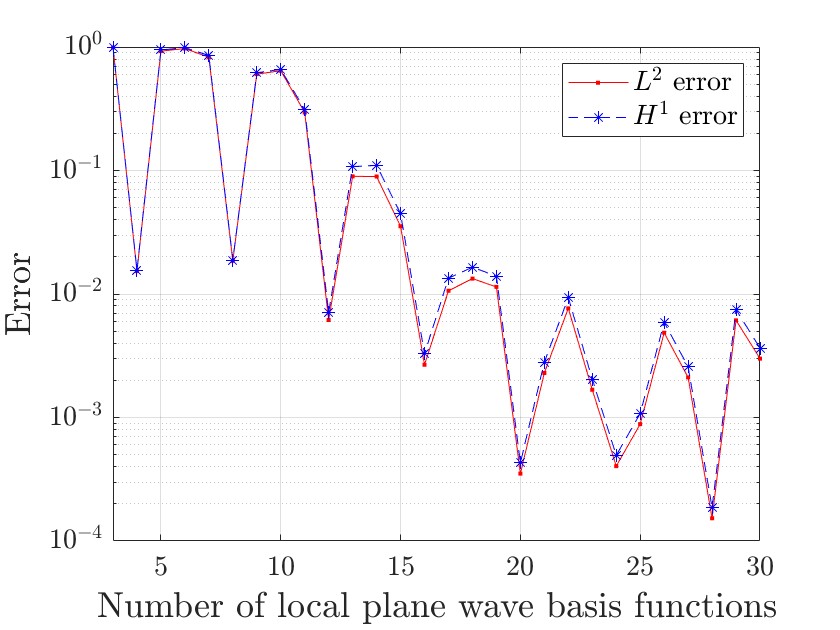}\hfil
\includegraphics[width=0.45\textwidth]{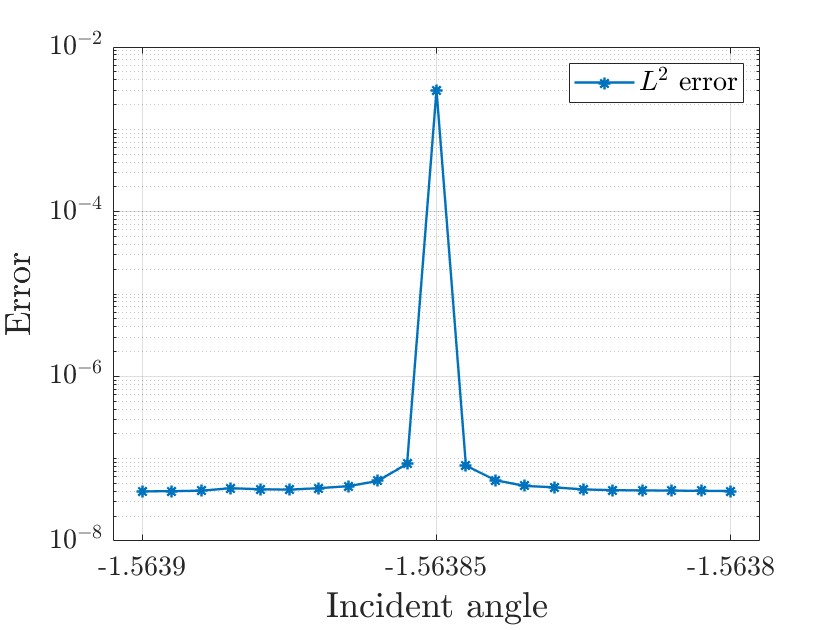}\\
\includegraphics[width=0.45\textwidth]{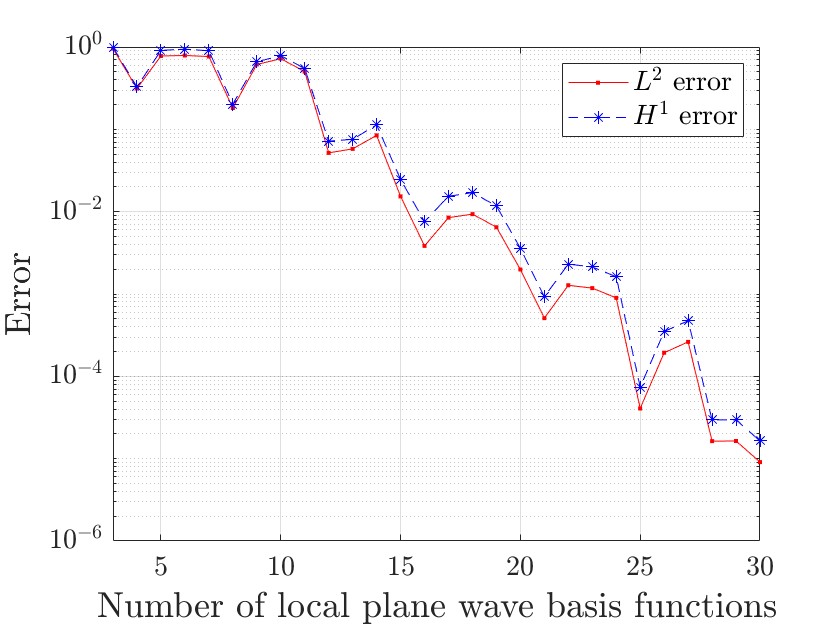}\hfil
\includegraphics[width=0.45\textwidth]{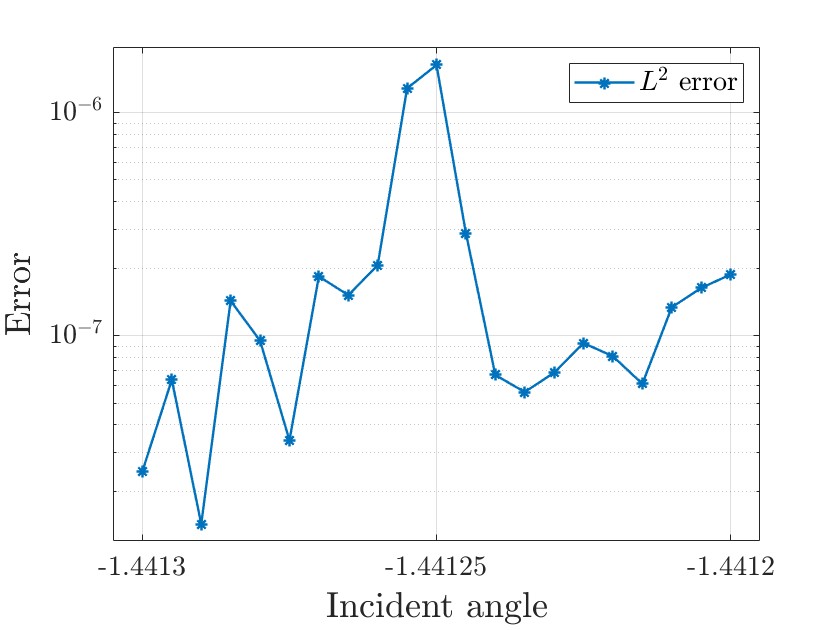}
\caption{
Numerical approximation of the two-interface problem admitting infinite solutions, described in \S\ref{s:non_uniq}.
The errors are computed against the ``unperturbed'' solution \eqref{sol_triple} without guided-mode components.
Left: $p$-convergence for the critical value of $\theta$, relative errors in $\Omega$ (compare against the right panels of Figures~\ref{fig:triple_eps2}--\ref{fig:triple_eps10}, for a non-singular choice of $\theta$).
Right: $L^2(\Omega)$ relative error in dependence of the angle $\theta$ of the incoming wave.
Top: $\varepsilon_{in}=2$; bottom: $\varepsilon_{in}=10$.
}
\label{fig:guided_conv}
\end{figure}

We test the convergence to \eqref{sol_triple}
for both cases $\varepsilon_{in}=2$ and $\varepsilon_{in}=10$, choosing the incident angle corresponding to the singular case.
In Figure \ref{fig:guided_conv} (left panels) we display the convergence plots: we observe that the method does not converge smoothly to the ``unperturbed'' solution.
Plotting the DtN-TDG error (not reported here), we observe that it is a multiple of the guided mode solution \eqref{trapped} in Figure \ref{fig:u_tr}. 
This means that the DtN-TDG method approximates one of the infinite solutions $u=u_0+\mu u^{GM}$, for $\mu \in \IC$ and $u_0$ as in \eqref{sol_triple}, but not necessarily to the solution with $\mu = 0$.
We also plot the DtN-TDG error for a fixed mesh and $p=30$, in dependence of the incident angle $\theta$, allowed to vary in a small interval of length $10^{-4}$ centered at the critical value.
We see that for $\varepsilon_{in}=2$ (top plots), the numerical error decreases by over 4 orders of magnitude when $\theta$ moves away from the critical value.
On the other hand, for $\varepsilon_{in}=10$ (bottom plots) the error oscillates more with respect to $\theta$, but the location of the singular value of $\theta$ is clearly detectable.

\subsection{Interfaces with corners and convergence in the DtN truncation parameter \texorpdfstring{$M$}M}
\label{s:quad}

In this and next sections, we apply the DtN-TDG scheme to problems involving corner singularities.
The exact solutions are not available in closed form, so we compute the errors by testing against DtN-TDG solutions computed with higher numbers of plane waves.

Following \cite{Bao2000}, we consider the domain $\Omega = (0, 2\pi) \times (-2, 2)$
with $\varepsilon=\varepsilon^+ = 1$ in $\{x_2>1\}\cup\{x_2>-1, |x_1-\pi|>\frac\pi2\}$ and $\varepsilon=\varepsilon^- = 1.6 + 0.25\ri$ otherwise, as shown in Figure \ref{fig:quad}.
We take
$k = 4$ and $\theta=-\pi/3$.
We choose the mesh parameter $h = 0.75$, resulting in a triangulation composed of 118 triangles.

Figure \ref{fig:quad} shows the approximate solution and the pointwise error, using as reference a DtN-TDG solution with $p=20$, showing how the error concentrates on the boundaries of the elements adjacent to the material interface corners.
The scatterer profile is also displayed in the solution plot.
The figure also shows the decay in $p$ of the relative $L^2(\Omega)$ and $H^1(\Omega)$ errors.

\begin{figure}[htb]
\centering
\includegraphics[height=.28\textwidth]{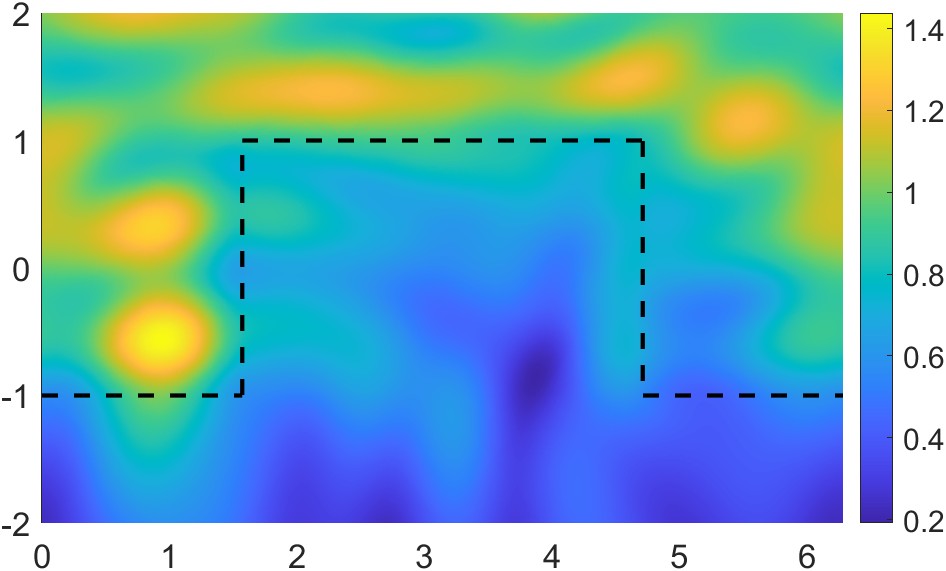}\qquad
\includegraphics[height=.28\textwidth]{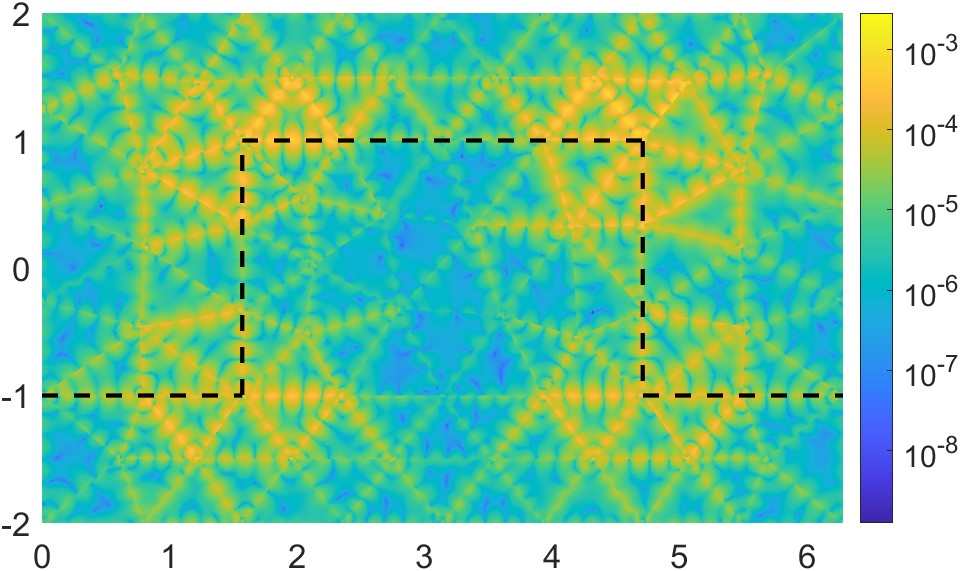}
\\
\begin{tikzpicture}[scale=.95, every node/.style={scale=0.75}]
\node[] at (-0.38,2) {$2$};
\node[] at (-0.38,1) {$1$};
\node[] at (-0.45, -2) {$-2$};
\node[] at (-0.45, -1) {$-1$};
\node[] at (-0.35, 0) {$0$};
\draw (-0.1,1) -- (0.1,1);
\draw (-0.1,0) -- (0.1,0);
\node[] at (6.28, -2.35) {$2\pi$};
\node[] at (0, -2.35) {$0$};
\draw (1.57,-1.9) -- (1.57,-2.1);
\draw (4.71,-1.9) -- (4.71,-2.1);
\node[] at (1.57, -2.4) {$\frac{\pi}{2}$};
\node[] at (4.71, -2.4) {$\frac{3\pi}{2}$};
\node[] at (3.14,1.5) {$\varepsilon^+ = 1$};
\node[] at (3.14,-0.5) {$\varepsilon^- = 1.6 + 0.25i$};
\draw (0,-2) -- (6.28,-2); 
\draw (0,-2) -- (0,2); 
\draw (0,2) -- (6.28,2);
\draw (6.28,-2) -- (6.28,2);
\draw (1.57,-1) -- (1.57,1);
\draw (0,-1) -- (1.57,-1);
\draw (4.71,-1) -- (6.28,-1);
\draw (4.71,-1) -- (4.71,1);
\draw (1.57,1) -- (4.71,1);
\end{tikzpicture}\qquad
\includegraphics[width=.47\textwidth]{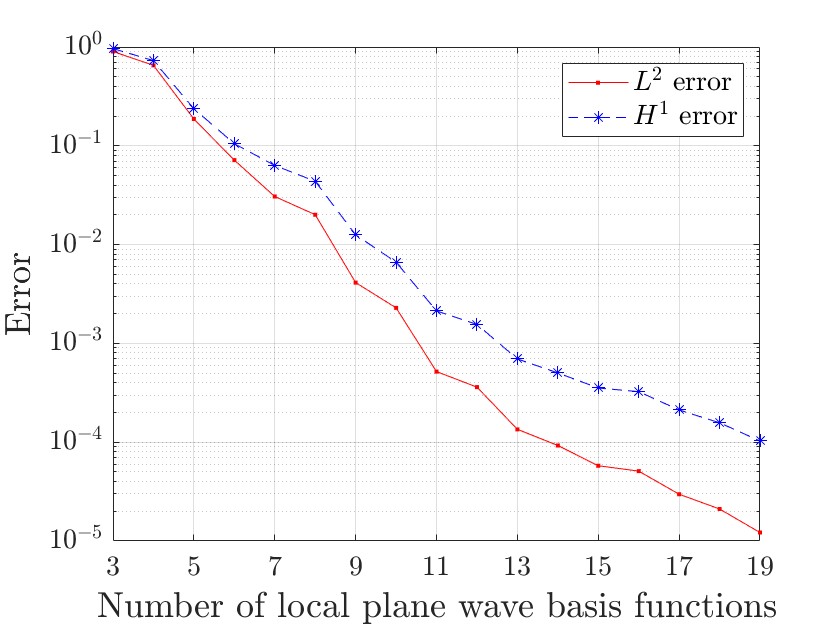}
\caption{The problem with corner singularities described in \S\ref{s:quad}.
Top left: the absolute value of the numerical solution.
Top right: the absolute value of the TDG error for $p=19$ and $M=20$ against a refined numerical solution, in logarithmic color scale.
Bottom left: the geometry of the problem.
Bottom right: the $p$-convergence of the relative $L^2(\Omega)$ and $H^1(\Omega)$ errors.}
\label{fig:quad}
\end{figure}

\subsubsection{Dependence of the error on the number of DtN Fourier modes} 
\label{s:Mconv}

The numerical experiments in \S\ref{s:2regions}--\ref{s:3regions} are independent of the choice of the DtN Fourier truncation parameter $M$, while the example in Figure~\ref{fig:quad} is not.
We thus study how the numerical error depends on $M$ in this case.
With the same configuration of \S\ref{s:quad}, we fix the mesh and the number of plane wave directions $p$, and increase the order of truncation $M$ in \eqref{DtN_trunc}, corresponding to choosing $2M+1$ Fourier modes on $\GpmH$. 
In Figure~\ref{fig:MConv}, we show the results for different values of $k$:
the error decreases quickly and then flattens, showing that the DtN-truncation error is dominated by the TDG discretization one.
As expected, the convergence is slower for larger wavenumbers $k$.
This is in complete agreement with the DtN-TDG results for the exterior scattering problem in \cite[Fig.~2]{Kapita2018}.

\begin{figure}[htb]
\centering
\includegraphics[width=.6\textwidth]{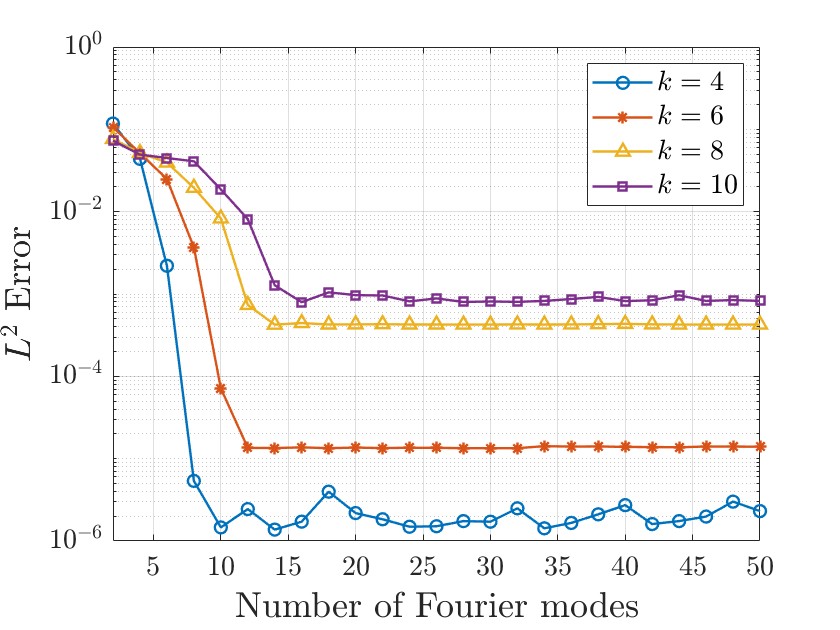}
\caption{Relative $L^2(\Omega)$ error against the truncation order $M$ for the DtN operator, for $k= 4, 6, 8, 10$, as described in \S\ref{s:Mconv}.}
\label{fig:MConv}
\end{figure}

\begin{figure}[htb]
\hbox to \textwidth{
\includegraphics[height=.4\textwidth]{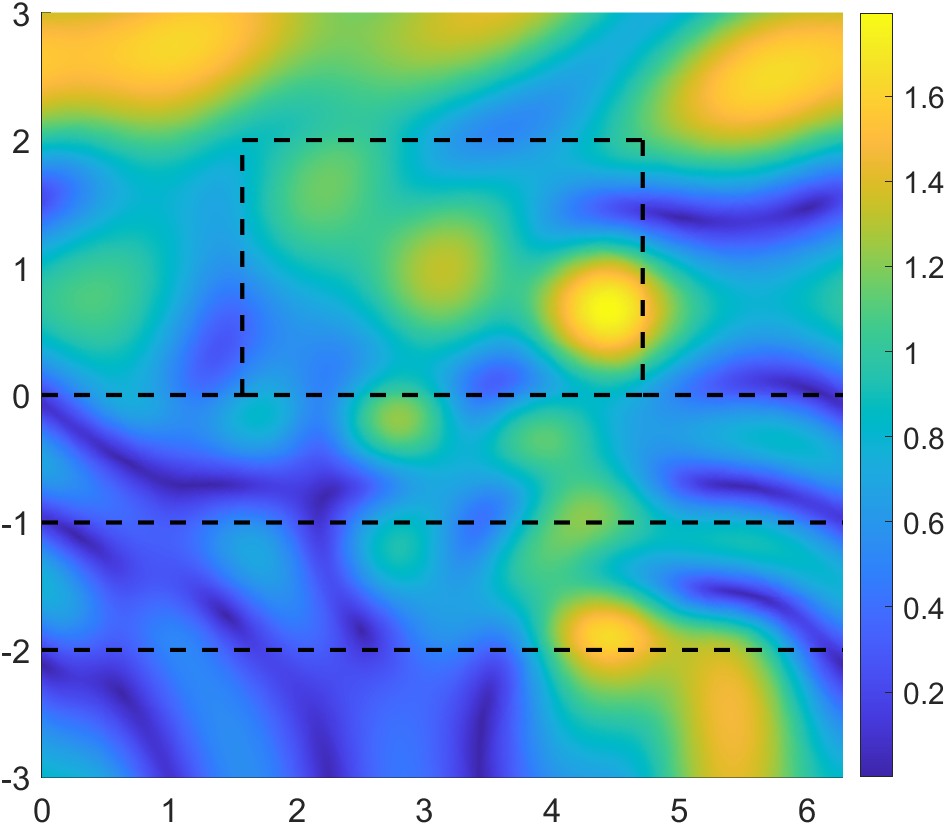}\hfil
\includegraphics[height=.4\textwidth]{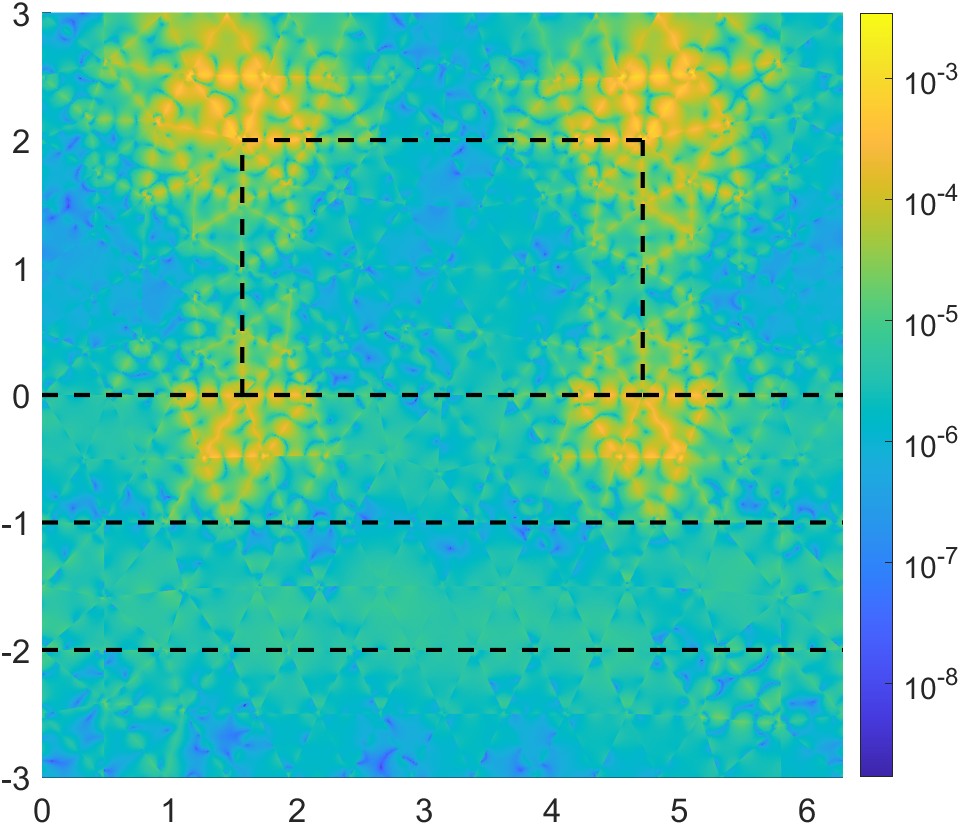}}
\caption{Absolute value of the numerical solution (left), and absolute value of the relative error (right, logarithmic color scale) for the problem with five materials in \S\ref{s:bao} and $p=14$.}
\label{fig:bao}
\end{figure}
\begin{figure}[htb]
\centering
\noindent\hbox to .95\textwidth{\begin{tikzpicture}[scale=0.7, every node/.style={scale=0.7}]
	\node[] at (-0.38,3) {$3$};
	\node[] at (-0.38,2) {$2$};
	\node[] at (-0.38,1) {$1$};
	\node[] at (-0.45,-3) {$-3$};
	\node[] at (-0.45, -2) {$-2$};
	\node[] at (-0.45, -1) {$-1$};
	\node[] at (-0.35, 0) {$0$};
	\draw (-0.1,1) -- (0.1,1);
	\draw (-0.1,2) -- (0.1,2);
	\node[] at (6.28, -3.35) {$2\pi$};
	\node[] at (0, -3.35) {$0$};
	\draw (1.57,-2.9) -- (1.57,-3.1);
	\draw (4.71,-2.9) -- (4.71,-3.1);
	\node[] at (1.57, -3.4) {$\frac{\pi}{2}$};
	\node[] at (4.71, -3.4) {$\frac{3\pi}{2}$};
	\node[] at (3.14,2.5) {$\varepsilon_1 = 1$};
	\node[] at (3.14,1) {$\varepsilon_2 = 2.2201$};
	\node[] at (3.14,-0.5) {$\varepsilon_3 = 4.0804$};
	\node[] at (3.14,-1.5) {$\varepsilon_4 = 4.5369$};
	\node[] at (3.14,-2.5) {$\varepsilon_5 = 2.1112$};
	\draw (0,-3) -- (6.28,-3); 
	\draw (0,-3) -- (0,3); 
	\draw (0,3) -- (6.28,3);
	\draw (6.28,3) -- (6.28,-3);
	\draw (0,-2) -- (6.28,-2);
	\draw (0,-1) -- (6.28,-1);
	\draw (0,0) -- (6.28,0);
	\draw (1.57,0) -- (1.57,2);
	\draw (4.71,0) -- (4.71,2);
	\draw (1.57,2) -- (4.71,2);
\end{tikzpicture}\hfil\includegraphics[width=.5\textwidth]{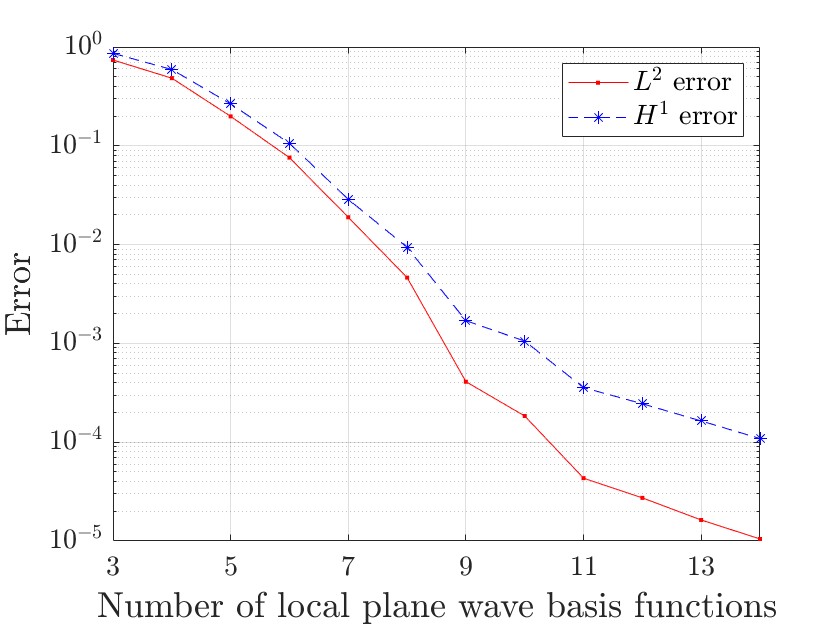}}
\caption{The domain (left) and the relative errors in $L^2(\Omega)$ and $H^1(\Omega)$ norms, for $h=0.5$ and $p \in \{3,\ldots,14\}$,	for the problem in \S\ref{s:bao}.}
\label{fig:bao_err}
\end{figure}

\subsection{Problem with more than two materials}
\label{s:bao}

We partition the domain $\Omega = (0, 2\pi) \times (-3, 3)$ in five polygonal regions, as depicted in Figure~\ref{fig:bao_err}.
This grating structure is used in optical filters and guided mode resonance devices, \cite[Ex.~3]{Bao2000}.
The parameters are: $\varepsilon_1 = \varepsilon^+ = 1$, $\varepsilon_2 = 1.49^2$, $\varepsilon_3 = 2.13^2$, $\varepsilon_4 = 2.02^2$, $\varepsilon_5 = \varepsilon^- = 1.453^2$, $\theta = -\pi/4$, and $k = 2$.
The mesh parameter is chosen as $h = 0.5$, resulting in a triangulation of 332 triangles.

Figure \ref{fig:bao} shows the approximate solution and the error against a refined solution obtained using $p = 15$. 
Figure \ref{fig:bao_err} presents the convergence of the $L^2(\Omega)$ and $H^1(\Omega)$ error norms, relative to the same refined solution.
For $p\gtrsim 9$ the convergence slows down, possibly because of the solution singularities near the polygon vertices.

To validate the method and its implementation, the problem was also solved on the extended domain $\Omega^* = (0, 4\pi) \times (-3, 3)$, and the corresponding solution compared with the solution computed on $\Omega$ and extended by quasi-periodicity.
Using the same number $p=15$ of plane wave functions per element, the relative $L^2(\Omega)$ and $H^1(\Omega)$ norm errors obtained are $3.373 \times 10^{-6}$ and $8.066 \times 10^{-5}$, respectively.

\subsection{Impenetrable obstacles} \label{s:dir}
So far we have only considered a domain composed of different materials transparent to waves.
Now we include a rectangular impenetrable obstacle $D$, equipped with 
Dirichlet boundary conditions.
We consider the domain $\Omega = (0,2\pi) \times (-5,5) \setminus [\frac{2}{3}\pi, \frac{4}{3}\pi] \times [-1,1]$, wavenumber $k=5$, and mesh width $h=0.75$, resulting in a triangulation of $236$ triangles.
We present two examples with constant and variable $\varepsilon$:
\begin{enumerate}[(i)]
\item $\varepsilon=1$ in $\Omega$ and incident angle $\theta=-\pi/4$ (Figure~\ref{fig:dir});
\item $\varepsilon$ takes three different values in $\{x_2>3\}$, $\{|x_2|<3\}$, $\{x_2<-3\}$, and $\theta=-\pi/3$ (Figure~\ref{fig:dir_triple}).
\end{enumerate}
Case (i) satisfies the non-trapping assumption \eqref{eq:NonTrap}, while in case (ii) the permittivity $\varepsilon$ takes a complex value in one of the regions.
In both cases, we compute the relative error with respect to a refined solution computed with $p=20$.
The results are comparable to those without impenetrable obstacles.
The error is largest near the corners of the scatterer.

\begin{figure}[htb]
\centering
\noindent\hbox to \textwidth{\begin{tikzpicture}[scale=0.88, every node/.style={scale=0.7}]
	\node[] at (-0.38,4) {$5$};
	\node[] at (-0.5,0) {$-5$};
	\draw (1,-0.1) -- (1,0.1);
	\draw (2,-0.1) -- (2,0.1);
	\draw (-0.1,1.5) -- (0.1,1.5);
	\draw (-0.1,2.5) -- (0.1,2.5);
	\node[] at (-0.4, 2.5) {$1$};
	\node[] at (-0.5, 1.5) {$-1$};
	\node[] at (0, -0.2) {$0$};
	\node[] at (3, -0.2) {$2 \pi$};
	\node[] at (1.5,3.35) {$\varepsilon = 1$};
	\node[] at (1, -0.3) {$\frac{2}{3}\pi$};
	\node[] at (2, -0.3) {$\frac{4}{3}\pi$};
	\fill[gray!40!white, draw=black] (1,1.5) rectangle (2,2.5);
	\draw (1.5,2)node{$D$};
	\draw (0,0) -- (3,0); 
	\draw (0,0) -- (0,4); 
	\draw (0,4) -- (3,4);
	\draw (3,4) -- (3,0);
\end{tikzpicture}\hfill
\includegraphics[height=.27\textwidth]{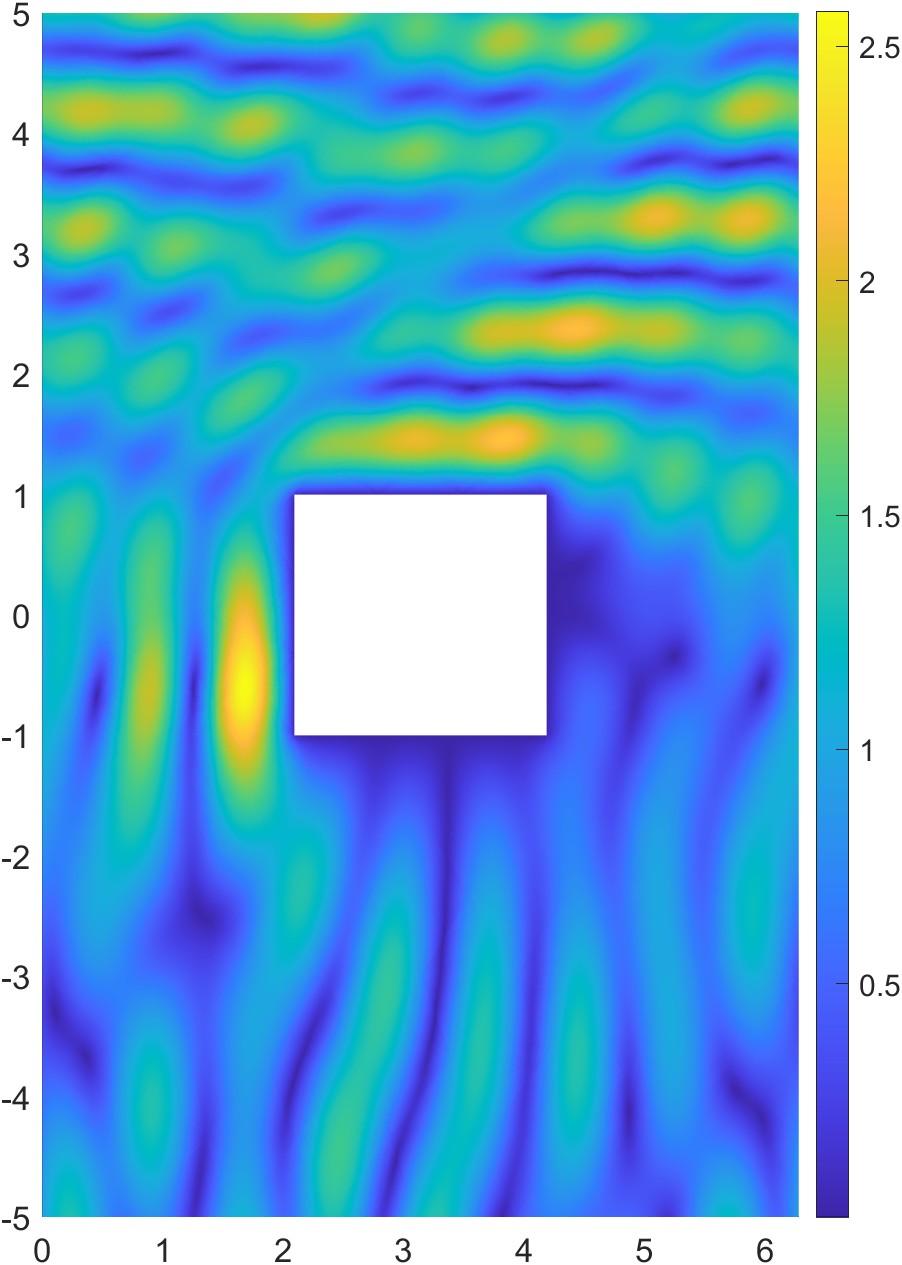}\hfill
\includegraphics[height=.27\textwidth]{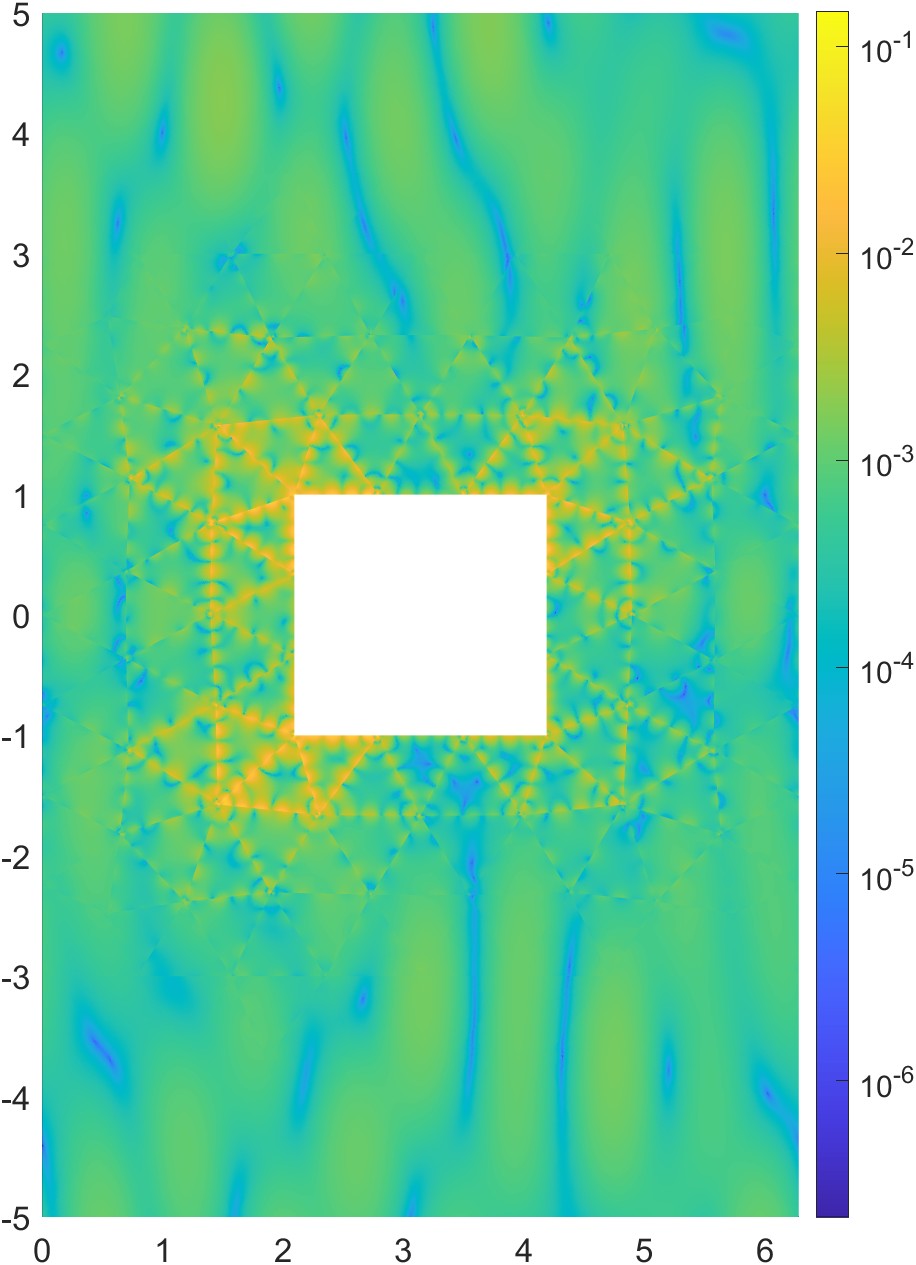}\hfill
\includegraphics[height=.28\textwidth,clip,trim=40 0 0 0]{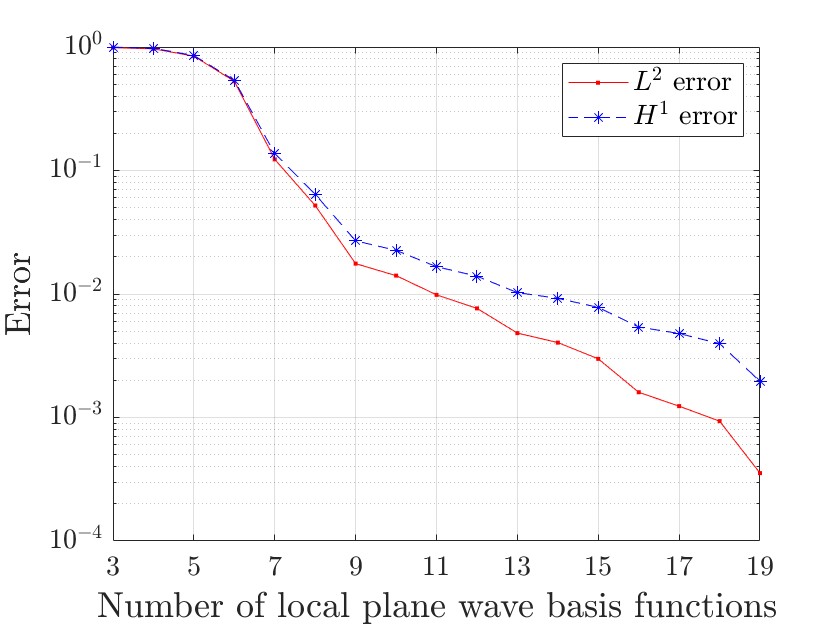}}
\caption{Left to right: domain sketch, absolute value of the solution and of the error (in logarithmic color scale), and relative errors in $L^2(\Omega)$ and $H^1(\Omega)$ norm for $h=0.75$ and $p \in \{3,\ldots,19\}$ 
for example (i) in \S\ref{s:dir}.}
\label{fig:dir}
\end{figure}
\begin{figure}[htb]
\centering
\noindent\hbox to \textwidth{\begin{tikzpicture}[scale=0.88, every node/.style={scale=0.7}]
	\node[] at (-0.38,4) {$5$};
	\node[] at (-0.5,0) {$-5$};
	\node[] at (-0.38,3.25) {$3$};
	\node[] at (-0.5,0.75) {$-3$};
	\draw (1,-0.1) -- (1,0.1);
	\draw (2,-0.1) -- (2,0.1);
	\draw (-0.1,1.5) -- (0.1,1.5);
	\draw (-0.1,2.5) -- (0.1,2.5);
	\node[] at (-0.4, 2.5) {$1$};
	\node[] at (-0.5, 1.5) {$-1$};
	\node[] at (0, -0.2) {$0$};
	\node[] at (3, -0.2) {$2 \pi$};
	\node[] at (1.5,3.6) {$\varepsilon^+ = 1$};
	\node[] at (1.5,2.83) {$\varepsilon_{in} = 2$};
	\node[] at (1.5,0.37) {$\varepsilon^- = 1.6 + 0.25i$};
	\node[] at (1, -0.3) {$\frac{2}{3}\pi$};
	\node[] at (2, -0.3) {$\frac{4}{3}\pi$};
	\fill[gray!40!white, draw=black] (1,1.5) rectangle (2,2.5);
	\draw (1.5,2)node{$D$};
	\draw (0,0) -- (3,0); 
	\draw (0,0) -- (0,4); 
	\draw (0,4) -- (3,4);
	\draw (3,4) -- (3,0);
	\draw (0,3.25) -- (3,3.25); 
	\draw (0,0.75) -- (3,0.75);
\end{tikzpicture}\hfill
\includegraphics[height=.27\textwidth]{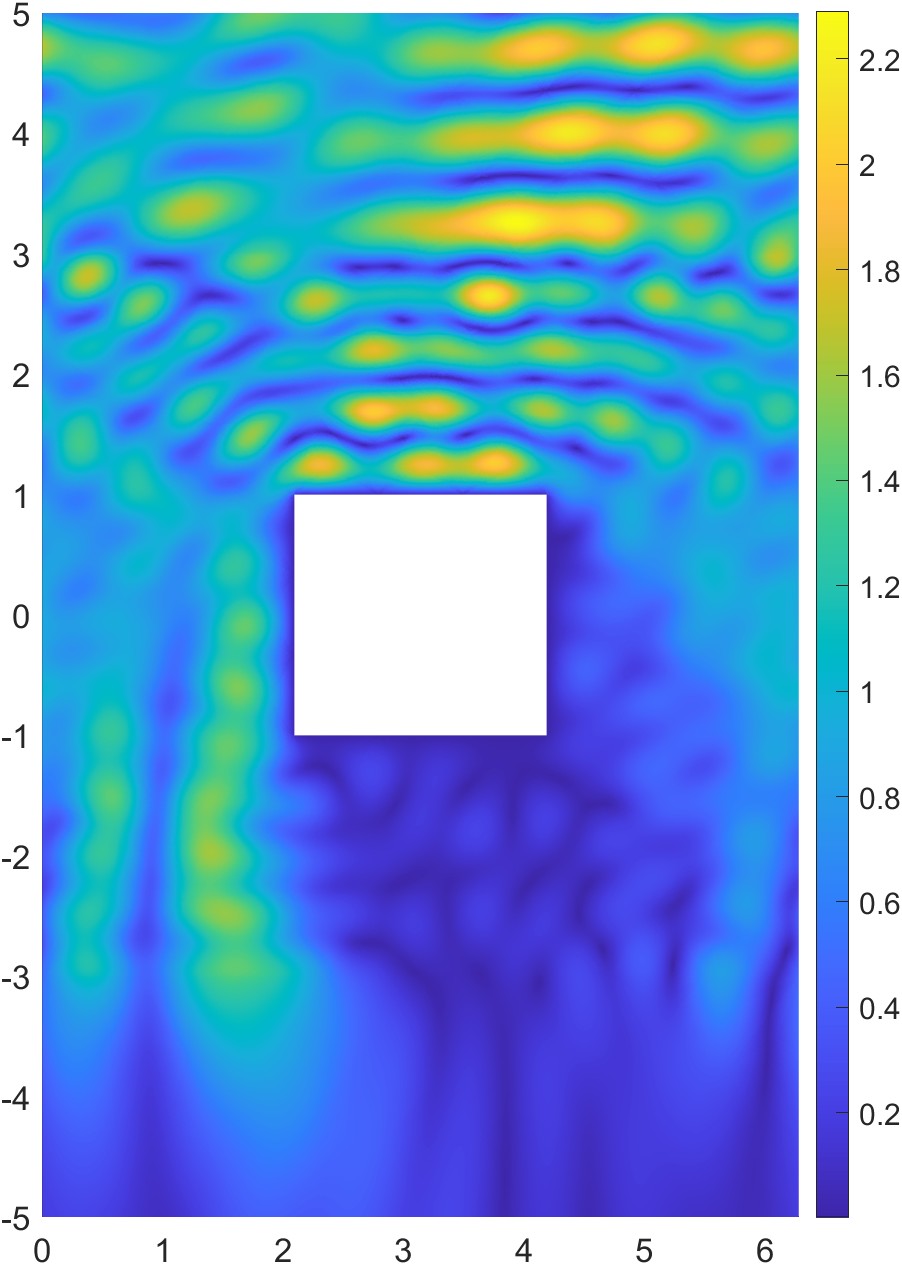}\hfill
\includegraphics[height=.27\textwidth]{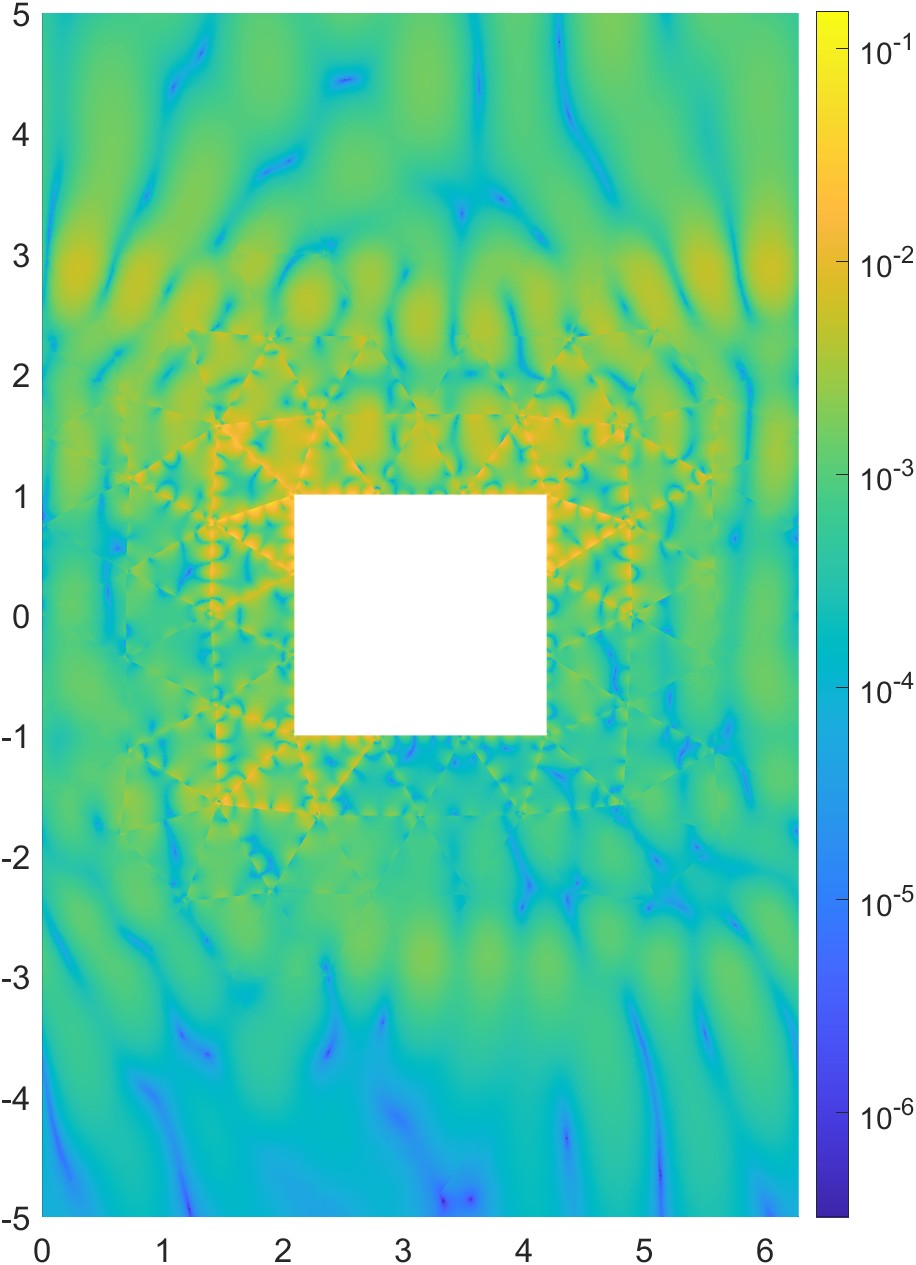}\hfill
\includegraphics[height=.28\textwidth,clip,trim=40 0 0 0]{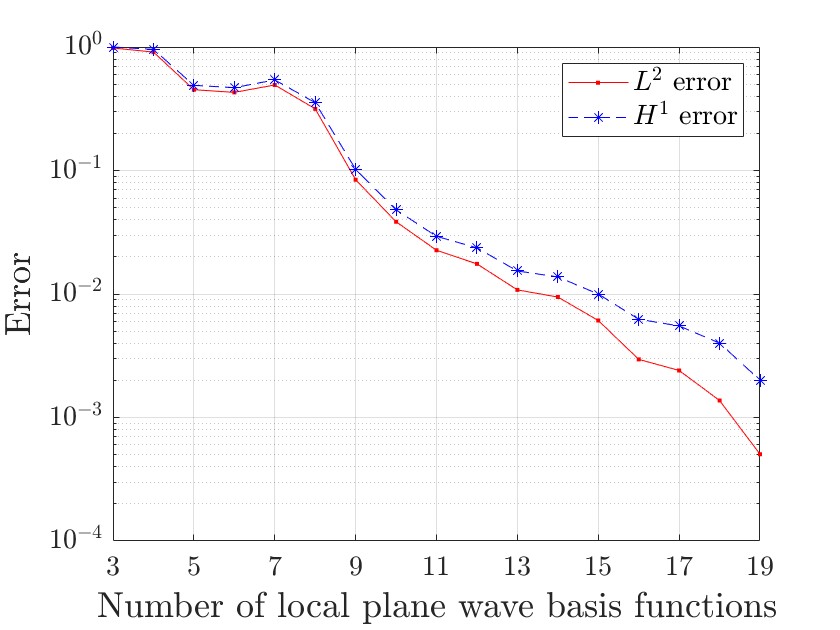}}
\caption{Same as Figure~\ref{fig:dir} for example (ii) in \S\ref{s:dir}.}
\label{fig:dir_triple}
\end{figure}


\section*{Declarations}
\subsection*{Acknowledgements} 
The authors are grateful to the anonymous referees for many constructive comments that improved the clarity of the paper.

\subsection*{Funding} This work was supported by GNCS--INDAM and by the PRIN project ``ASTICE'' (202292JW3F), funded by the European Union--NextGenerationEU.

\subsection*{Conflict of interest} The authors have no relevant financial or non-financial interests to disclose.

\subsection*{Ethics approval and consent to participate} Not applicable.

\subsection*{Consent for publication} All authors approved the version to be published and consent the publication.

\subsection*{Data availability} Not applicable.

\subsection*{Materials availability} Not applicable.

\subsection*{Code availability} The code implemented is available at \url{https://github.com/Arma99dillo/DtN-TDG}

\subsection*{Author contribution} All authors contributed to the paper conception and design. Computations and code development were performed by Armando Maria Monforte. The first draft of the manuscript was written by all authors and all authors commented on previous versions of the manuscript. All authors read and approved the final manuscript.

\addcontentsline{toc}{section}{References}
\printbibliography
\end{document}